\pgfplotsset{compat=1.17}
\theoremstyle{definition}
\newtheorem{defi}{Definition}[section]
\theoremstyle{plain}
\newtheorem{theorem}[defi]{Theorem}
\newtheorem{lem}[defi]{Lemma}
\newtheorem{cor}[defi]{Corollary}
\newtheorem{prop}[defi]{Proposition}
\theoremstyle{remark}
\newtheorem{rem/}{Remark}
\newenvironment{rem}
  {%
   \pushQED{\qed}\begin{rem/}}
  {\popQED\end{rem/}}
\newcommand*{\Eq}{Eq.~}
\newcommand*{\di}{\mathrm{d}}
\newcommand{\bbE}{\mathbb{E}}
\newcommand{\I}{\mathbb{I}}
\newcommand{\bbp}{\mathbb{p}}
\newcommand{\bbP}{\mathbb{P}}
\newcommand{\bbr}{\mathbb{r}}
\newcommand{\R}{\mathbb{R}}
\newcommand{\bbS}{\mathbb{S}}
\newcommand{\T}{\mathbb{T}}
\newcommand{\V}{\mathbb{V}}
\newcommand{\Z}{\mathbb{Z}}
\newcommand{\eps}{\varepsilon}
\newcommand{\calC}{\mathcal{C}}
\newcommand{\calF}{\mathcal{F}}
\newcommand{\calG}{\mathcal{G}}
\newcommand{\calL}{\mathcal{L}}
\newcommand{\calM}{\mathcal{M}}
\newcommand{\calP}{\mathcal{P}}
\newcommand{\scrR}{\mathscr{R}}
\newcommand{\scrH}{\mathscr{H}} 
\newcommand{\scrL}{\mathscr{L}} 
\newcommand{\scrI}{\mathscr{I}}	
\newcommand{\scrJ}{\mathscr{J}}	
\newcommand{\scrS}{\mathscr{S}}	
\newcommand{\stat}{\pi} 
\newcommand{\FI}{\mathcal{D}} 
\newcommand{\Mol}{\mathrm{M}} 
\renewcommand*{\mid}{\,\vert\,}
\DeclareMathOperator{\Div}{div}
\DeclareMathOperator{\dnabla}{\overline\nabla}
\DeclareMathOperator{\ddiv}{\overline{div}}
\DeclareMathOperator{\dpartial}{\overline\partial}
\DeclareMathOperator{\mnabla}{\widetilde{\nabla}}
\DeclareMathOperator{\mdiv}{\widetilde{div}}
\DeclareMathOperator\CE{CE} 
\DeclareMathOperator\ME{ME} 
\DeclareMathOperator\PCE{PCE} 
\DeclareMathOperator\Ent{Ent} 
\DeclareMathOperator\TV{TV} 
\DeclareMathOperator\BL{BL} 
\DeclareFontFamily{OMX}{MnSymbolE}{}
\DeclareSymbolFont{MnLargeSymbols}{OMX}{MnSymbolE}{m}{n}
\DeclareFontShape{OMX}{MnSymbolE}{m}{n}{
    <-6>  MnSymbolE5
   <6-7>  MnSymbolE6
   <7-8>  MnSymbolE7
   <8-9>  MnSymbolE8
   <9-10> MnSymbolE9
  <10-12> MnSymbolE10
  <12->   MnSymbolE12
}{}
\DeclareFontShape{OMX}{MnSymbolE}{b}{n}{
    <-6>  MnSymbolE-Bold5
   <6-7>  MnSymbolE-Bold6
   <7-8>  MnSymbolE-Bold7
   <8-9>  MnSymbolE-Bold8
   <9-10> MnSymbolE-Bold9
  <10-12> MnSymbolE-Bold10
  <12->   MnSymbolE-Bold12
}{}
\let\llangle\@undefined
\let\rrangle\@undefined
\DeclareMathDelimiter{\llangle}{\mathopen}%
                     {MnLargeSymbols}{'164}{MnLargeSymbols}{'164}
\DeclareMathDelimiter{\rrangle}{\mathclose}%
                     {MnLargeSymbols}{'171}{MnLargeSymbols}{'171}
\definecolor{bananayellow}{rgb}{1.0, 0.88, 0.21}
\newcommand*\UScomment[1]{}
\def\parab(#1, #2){exp(-#2**2/(2*#1))/sqrt(2*pi*#1)}
\def\parstep(#1, #2){(1 + erf(#2/sqrt(4*#1)))/2}
\def\hyper(#1, #2){.25*exp(-.5*#1)*(besi0(.5*real(sqrt(#1**2-#2**2)))+#1*besi1(.5*real(sqrt(#1**2-#2**2)))/real(sqrt(#1**2-#2**2)))}
\numberwithin{equation}{section}
\title{Fourier-Cattaneo equation: stochastic origin, variational formulation, and asymptotic limits}
\def\@setthanks{\vspace{-\baselineskip}\def\thanks##1{\@par##1\@addpunct.}\thankses}
\author{Alberto Montefusco, Upanshu Sharma, Oliver Tse}
\date{\today}
\begin{document}

\begin{abstract}
    We introduce a variational structure for the Fourier-Cattaneo (FC) system which is a second-order hyperbolic system. This variational structure is inspired by the large-deviation rate functional for the Kac process which is closely linked to the FC system. Using this variational formulation we introduce appropriate solution concepts for the FC equation and prove an a priori estimate which connects this variational structure to an appropriate Lyapunov function and Fisher information—the so-called FIR inequality. Finally, we use this formulation and estimate to study the diffusive and hyperbolic limits for the FC system. 
\end{abstract}
\maketitle

\tableofcontents

\section{Introduction}

Since the pioneering works of Onsager and Machlup \cite{OM53}, it has been known that a force-flux constitutive law at the macroscopic (coarser) scale is the manifestation of averaging effects that one observes in the passage from a microscopic (finer) level to a macroscopic (coarser) level of description. Over the last decade, this intuition has been made precise via the connection between underlying stochastic particle systems and macroscopic diffusion equations using the language of large deviations~\cite{AdamsDirrPeletierZimmer11,AdamsDirrPeletierZimmer13,MPR14}. Large-deviation theory lends a natural \emph{variational structure}---which is tightly linked to gradient-flow theory---to the diffusion equation (and other related parabolic systems), thereby making the notion of a force-flux constitutive relation precise. Yet, to the best of the authors' knowledge, no such link has either been established or investigated for hyperbolic systems.

The first aim of this work is, therefore, to provide a starting point for the development of \emph{variational structures} for hyperbolic systems. For this, we consider the so-called \emph{hyperbolic heat equation} introduced by Cattaneo~\cite{cC48} (see~\cite{JosephPreziosi89} for a detailed survey)  
\begin{equation}\label{Cattaneo}
    \tau \partial_t^2 \rho + \partial_t \rho = \Div(\alpha \, \nabla \rho ) \, .
\end{equation}
The relaxation time $\tau$ quantifies the time the system takes to respond to a force, and can be seen by writing~\eqref{Cattaneo} as a system of first-order equations, often called the  \emph{Fourier-Cattaneo} (FC) \emph{system}
\begin{subequations}\label{Cattaneo2}
    \begin{align}
        \partial_t \rho &= - \Div \omega \, , \\
        \partial_t \omega &= - \frac{1}{\tau} \bigl(\omega + \alpha \, \nabla \rho\bigr) \, .
    \end{align}
\end{subequations}
Historically, Cattaneo introduced this model in the context of heat conduction with a modified Fourier law to overcome the problem of an infinite speed of propagation (see Figure~\ref{fig:delta}). It should be noted that several models with finite speed of propagation exist in the literature, for instance in~\cite{RosenauKamin82,DebbaschMallickRivet97,DunkelHanggi09}. However, all of these are purely parabolic and in this article we will focus on the hyperbolic FC system~\eqref{Cattaneo2}.

We will derive a variational structure for the FC system using a stochastic system introduced in \cite{Furth1920,Taylor1921,Goldstein1951}, and subsequently studied by Kac~\cite{mK74} and McKean~\cite{McKean1967}. While the resulting variational structure does reveal new insights into the FC equation, our procedure has limitations in higher dimensions, as we discuss below; therefore, in this article, we will restrict ourselves to the FC system in one spatial dimension. We envision that this first study of variational structures for such equations will provide an alternate physically-motivated viewpoint to hyperbolic equations and widen the scope of techniques developed to study gradient flows. 

The second aim of this work is to use this variational structure to rigorously analyse the asymptotic behaviour of the FC system in two limiting regimes, namely, the diffusive limit (Section~\ref{subsec:parabolic}) and the hyperbolic limit (Section~\ref{subsec:hyperbolic}) using evolutionary convergence and the recently introduced FIR inequality \cite{HPST20}. Formal and rigorous diffusive and hyperbolic limits for kinetic models using various techniques can be found e.g.\ in \cite{LionsToscani1997,BDDLTV2003,Eftimie2012,BellouquidChouhad2016}. Although asymptotic limits of the FC system have been studied in the past, for instance, via Chapman-Enskog expansions (e.g.~in \cite{McKean1967,OthmerHillen00}), our method offers an alternative approach to proving asymptotic limits using variational techniques and with minimal assumptions on the initial data. 

\begin{figure}[t!]
\includegraphics{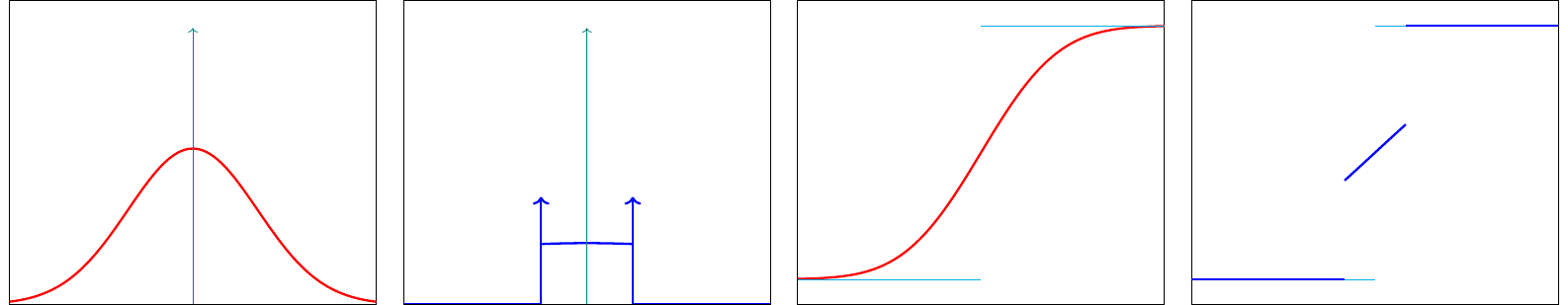}
    \caption{Plot of the fundamental solution to the diffusion equation $\partial_t \rho = \Div(\alpha \, \nabla \rho)$ in red and hyperbolic heat equation~\eqref{Cattaneo} in blue for two initial conditions $\rho_0=\delta_0,\mathbb 1_{[0,\infty)}$ (in green and cyan respectively) in $\R$. The diffusion equation clearly manifests an infinite speed of propagation with a smooth profile and unbounded support, while the support of the hyperbolic equation evolves discontinuously with a finite speed. }
    \label{fig:delta}
\end{figure}

\subsection{Stochastic model for FC equation}\label{sec:Poisson-Kac}
%
Parabolic equations often arise as hydrodynamic limits of (possibly interacting) particle systems (see \cite{KL99} and references therein). For instance, the diffusion equation $\partial_t \rho = \Div(\alpha \, \nabla \rho)$ can either be viewed as the hydrodynamic limit of independent Brownian motions on a continuous state space or of an exclusion process on a discrete lattice. In comparison, the literature on stochastic particle systems for second-order hyperbolic equations is far less developed (cf.~\cite{HillenHadeler05} and references therein). In \cite{mK74}, Kac studied a simple jump-process model that he formally connected to the FC system and the closely related telegrapher's equation. We now briefly describe this particle system and its connection to the FC system~\eqref{Cattaneo}. In \appendixname~\ref{sec:LD}, we give a heuristic motivation for the large deviations, which provides the basis for the variational structure.

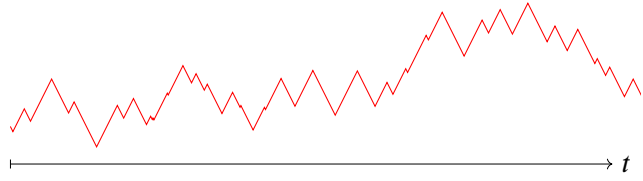
\begin{figure}[ht]
    \begin{tikzpicture}
        \draw[|->, ultra thin] (0, -.5) -- (8, -.5) node[at end, right] {$t$}; \pgfmathsetseed{1234} \useasboundingbox (0,-2) rectangle (8,2);
        \clip (current bounding box.north east) -- (current bounding box.north west) -- (current bounding box.south west) -- (current bounding box.south east) -- cycle;
        \draw[red] (0,0) \foreach \x in {1,...,100} {\foreach \y in {-1,1} {-- ++($rnd*(.3,\y*.6)$)}};
    \end{tikzpicture}
    \vspace{-1cm}
    \caption{A sample path of the Kac process with speed~$V = 2$ and switching rate~$\lambda=\frac12$.}
    \label{fig:Kac}
\end{figure}

Consider a particle moving in a one-dimensional torus $\T$ with a constant speed $V>0$ that may switch its direction according to a time-homogeneous Poisson process with the rate $\lambda$---see Figure~\ref{fig:Kac} for a sample path of this process. This process, called a \emph{Kac process} in this article, is a piecewise deterministic Markov process on $\Omega_V \coloneqq \T \times \{-V, V\}$. We now consider $N$ independent copies of this process labeled by the pair of position and velocity $(x^i_t,v^i_t)\in\Omega_V$. Throughout this article, we use the subscript for the evaluation at time $t$. 
Classical results~\cite[Section 11.4]{Dudley18} state that the \emph{empirical measure} 
\begin{equation}\label{empirical_process}
    \sigma_t^N \coloneqq \frac1N \sum\limits_{i=1}^N \delta_{(x^i_t, v^i_t)} \, ,
\end{equation}
converges almost surely, in the many-particle limit $N\rightarrow\infty$, to the measure-valued evolution 
\begin{equation}\label{marginals}
    \partial_t \sigma + v \, \partial_x \sigma = \lambda \, (\iota_\sharp\sigma - \sigma) \, ,\qquad\text{with\;  $\sigma_t=\mathrm{law}(x^i_t,v^i_t)$}\,,
\end{equation}
where $\iota_\sharp\sigma$ is the push-forward of $\sigma$  under the velocity-reversal map $\iota(x,v)=(x,-v)$, and hence $(\iota_\sharp\sigma)(dx,v)=\sigma(dx,-v)$. Henceforth, we will refer to~\eqref{marginals} as the \emph{Kac equation}. 
To illustrate the connection of~\eqref{marginals} to the FC system, we introduce the \emph{density} $\rho$ and the \emph{flux} $\omega$ as
\begin{equation}\label{eq:intro-biject}
    \rho(\di x) \coloneqq \sum\limits_{v \in \{-V, V\}} \! \sigma(\di x, v) \, , \quad 
    \omega(\di x) \coloneqq \sum\limits_{v \in \{-V, V\}}\! v \, \sigma(\di x, v) \, .
\end{equation}
The mapping $\sigma_t\mapsto (\rho_t,\omega_t)$ is in fact a bijection, and the inverse mapping is given by
\begin{equation}\label{bijection}
    \sigma(\di x, v) = \frac12\Bigl( \rho(\di x) + \frac{\omega(\di x)}{v} \Bigr) \, .
\end{equation}
It is easily checked that the density-flux pair formally evolves according to
\begin{subequations}\label{damped_hyperbolic}
    \begin{align}
        \partial_t \rho &= - \partial_x \omega \, , \label{density} \\
        \partial_t \omega &= - V^2 \partial_x \rho - 2 \lambda \, \omega \, , \label{flux}
    \end{align}
\end{subequations}
which is the FC system~\eqref{Cattaneo2} in one-dimension with $\alpha = V^2/(2\lambda)$ and $\tau = 1/(2\lambda)$.

As stated above, we focus on the one-dimensional FC system~\eqref{damped_hyperbolic}, a rather restrictive setting which is due to its connection to the Kac process. Consider, instead, a (Kac-type) particle moving in two dimensions, with the velocity switching randomly between four possibilities $\{(-V, 0), (V, 0), (0, -V), (0, V)\} \eqqcolon \V$ with the rate $\lambda$. The corresponding law of the process, which is also the limit of the empirical measure as above but now in two-dimensions, reads 
\begin{equation*}
    \partial_t\sigma(x, v) + v \cdot \nabla_x\sigma(x, v) = \lambda \sum\limits_{u \in \V \backslash \{v\}} \! \bigl( \sigma(x, u) - \sigma(x, v) \bigr) \, .
\end{equation*}
The evolution for the density-flux pair in this two-dimensional setting is given by 
\begin{align*}
    \partial_t\rho &= -\Div\omega \, , \\
    \partial_t \omega &= - \Div\Bigl[ \sum\limits_{v \in \V} (v\otimes v)\, \sigma(x, v) \Bigr] - 4 \lambda \, \omega \, .
\end{align*}
The evolution for the flux $\omega$ is not closed since it requires the second-order moment of $\sigma$ in $v$. Clearly, the evolution of this second moment requires information on further higher-order moments, which leads to an infinite set of equations. This is not an issue in the one-dimensional setting since, for $v\in\{-V,V\}$, we have $v^2 = V^2$, and therefore the second moment in the flux evolution reduces to the zeroth moment~$\rho$. In other words, the relation between $\sigma$ and the pair $(\rho, \omega)$ is a bijection only in one dimension. This is a strong limitation of the Kac process and it is unclear how to construct higher-dimensional analogues which circumvent this issue. The appearance of an infinite chain of moments is a typical phenomenon in statistical mechanics and indicates that a few moments are not sufficient to describe the system unless we enforce an artificial closure \cite{Hillen2002,Kuehn2016,KST2014}, which only acts as an approximation to the macroscopic FC system, or perform hyperbolic scaling limits \cite{Perthame2004,FLP2005,DolakSchmeiser2005}.

\subsection{Outline of results}\label{subsec:main-results} The first half of the paper is devoted to developing the variational structure for the FC system \eqref{damped_hyperbolic} (Section~\ref{sec:variational}) and deducing the implications of the structure (Section~\ref{sec:FIR}). We begin by introducing a variational structure for the Kac equation~\eqref{marginals}.

Define the functional $\scrI\colon C([0,T];\calP(\Omega_V))\times \calM([0,T];\calM(\Omega_V;\R^2))\rightarrow[0,+\infty]$, with $j=(j^1,j^2)$ as
\begin{equation}\label{RF}
    \scrI(\sigma, j) =
    \begin{dcases*}
        \int_0^T \Ent(j^2_t \mid \lambda \, \sigma_t)\,dt & if $\partial_t\sigma + \ddiv j = 0$ and $j^1 = v \, \sigma$, \\
        + \infty & otherwise.
    \end{dcases*}
\end{equation}
Here $\Ent(\cdot|\cdot)$ is the relative entropy of measures and $\ddiv$ is the divergence operator; they are defined in \eqref{def:relEnt} and \eqref{def:Omega-der} respectively (cf.~also Definition~\ref{def:CE} for the definition of the continuity equation). This functional, which we call the \emph{rate functional} since it is inspired by the large deviations of the Kac process (see \appendixname~\ref{sec:LD} for details), is a variational formulation for the Kac equation~\eqref{marginals} in the sense that
\begin{equation*}
    \scrI\geq 0 \;\; \text{ and } \;\; \scrI(\sigma,j)=0 \quad\Longleftrightarrow\quad \sigma \text{ solves } \eqref{marginals}.
\end{equation*}
Using the bijective mapping~\eqref{bijection}, we construct an equivalent variational formulation for the FC system~\eqref{damped_hyperbolic} (in the sense as above) via the relation
\begin{equation}\label{varstr}
    \scrJ(\rho, \omega) = \inf
    \bigl\{ \scrI(\sigma, j): \partial_t\sigma + \ddiv j =0 \, , \; \sigma = \rho + \omega/v \bigr\}\,.
\end{equation}
Both $\scrI$ and $\scrJ$ have a logarithmic structure inherited from the relative entropy, which is in sharp contrast to quadratic structures for related second-order hyperbolic systems \cite[Section~5.4]{PKG18}. 

The variational structure provided by \eqref{RF} allows us to establish the so-called FIR inequality
in Section~\ref{sec:FIR}:
\begin{equation}\label{eq:intro_FIR}
\Ent(\sigma_t\mid\stat) + \lambda \int_0^t \FI(\sigma_r\mid\stat)\,dr \leq \Ent(\sigma_0\mid\stat) + \scrI(\sigma,j) \, ,\qquad\text{for every $t\in[0,T]$}\,,
\end{equation}
which relates the \emph{free energy} $\Ent(\sigma\mid\stat)$, the \emph{Fisher information} $\FI(\sigma\mid\stat)$ (see \eqref{def:FI} for its definition), and the rate function $\scrI(\sigma,j)$ for any pair $(\sigma,j)$ with $\scrI(\sigma,j)<\infty$. Similar estimates have been discussed in recent years for a variety of systems~\cite{DLPS17,DLPSS18,HPST20,PRS21}.  

By projecting the FIR inequality onto the density-flux pair $(\rho,\omega)$, we obtain 
\begin{equation*}
    \Ent(\rho_t\mid\mathcal L_{\T}) + \frac{1}{2\alpha} \int_0^t\left\|\frac{d\omega_r}{d\rho_r}\right\|_{L^2(\T,\rho_r)}^2 \,dr \leq \Ent(\sigma_0\mid\stat) + \scrI(\sigma,j) \, ,\qquad\text{for every $t\in[0,T]$}\,,
\end{equation*}
where $\mathcal L_\T$ is the Lebesgue measure on the torus $\T$. This inequality is the main ingredient in establishing compactness for density-flux-pair sequences in the later part of the paper.

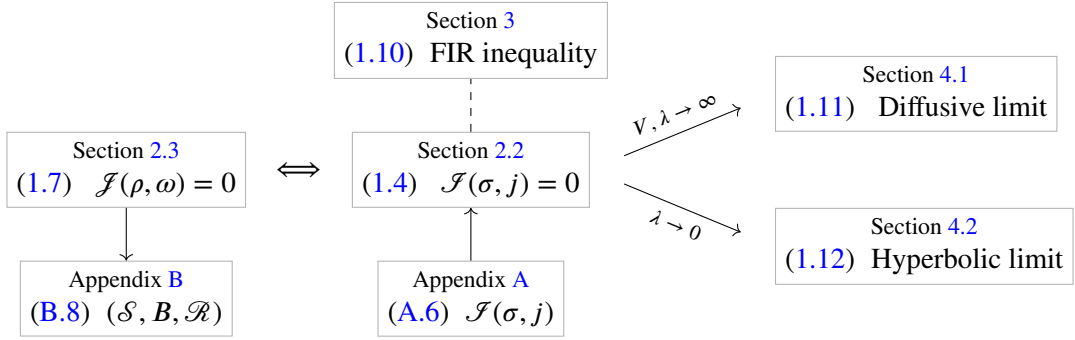
\begin{figure}[ht]
 \begin{tikzpicture}
  \tikzstyle{r}=[rectangle, draw=black!30, align=center, anchor=west]
  \node[r] (A) {\footnotesize{Section~\ref{sec:Var}}\\\eqref{damped_hyperbolic}\;\; $\scrJ(\rho,\omega)=0$};
  \node[r, right=40 pt of A] (B) {\footnotesize{Section~\ref{subsec-Kac-var}}\\\eqref{marginals}\;\; $\scrI(\sigma,j)=0$};
  \node (C) at ($(A)!0.5!(B)$) {$\Longleftrightarrow$};
  \node[r, right=70 pt of B.north east, anchor=south west] (D) {\footnotesize{Section~\ref{subsec:parabolic}} \\ \eqref{eq:intro_heat}\;\; Diffusive limit};
  \draw[->, shorten <=14pt, shorten >=14pt] (B.east) -- (D.west) node[midway, above, sloped] {\scriptsize{$V,\lambda\to\infty$}};
  \node[r, right=70 pt of B.south east, anchor=north west] (H) {\footnotesize{Section~\ref{subsec:hyperbolic}} \\ \eqref{eq:intro_wave}\;\;Hyperbolic limit};
  \draw[->, shorten <=14pt, shorten >=14pt] (B.east) -- (H.west) node[midway, below, sloped] {\scriptsize{$\lambda\to0$}};
  \node[r, above=20 pt of B.north] (G) {\footnotesize{Section~\ref{sec:FIR}} \\ \eqref{eq:intro_FIR}\;\;FIR inequality};
  \draw[dashed] (B.north) -- (G.south);
  \node[r, below=20 pt of B.south] (LD) {\footnotesize Appendix~\ref{sec:LD}\\\eqref{RateFunction}\;\;$\scrI(\sigma, j)$};
  \draw[->] (LD) -- (B);
  \node[r, below=20 pt of A.south] (G) {\footnotesize Appendix~\ref{sec:preGEN}\\\eqref{preGEN}\;\;$(\scrS,B,\scrR)$};
  \draw[->] (A) -- (G);
 \end{tikzpicture}
 \caption{Schematic outline of the article. The variational structure $\scrI(\sigma, j)$ and the corresponding solution concepts (as zeros of this structure) for the Kac equation~\eqref{marginals} and the FC system~\eqref{damped_hyperbolic} are introduced in Sections~\ref{subsec-Kac-var} and~\ref{sec:Var} respectively. Section~\ref{sec:Var} also shows that these two equations are equivalent. The FIR inequality for the Kac equation is introduced in Section~\ref{sec:FIR}. The asymptotic parabolic and hyperbolic limits are discussed in Sections~\ref{subsec:parabolic} and~\ref{subsec:hyperbolic} respectively. In \appendixname~\ref{sec:LD} we 
 heuristically motivate the variational structure~$\scrI(\sigma, j)$ from large deviations, and in \appendixname~\ref{sec:preGEN} we derive a pre-GENERIC structure ($\scrS,B,\scrR$) for the FC system from the variational structure~$\scrJ(\rho, \omega)$.}
\end{figure}

In the second half of the paper, we use the aforementioned variational structures to study two asymptotic limits of~\eqref{marginals} (and equivalently of the FC system~\eqref{damped_hyperbolic}). We now briefly discuss both of these limits and outline the variational technique used to study these limits. 

The first limit is a \emph{diffusive limit} where both $V$ and $\lambda$ grow to infinity, such that~$\alpha \coloneqq V^2/(2 \lambda)$ stays fixed. At the level of the underlying (stochastic) Kac process, this corresponds to the situation where both the speed $V$ of the particle and the switching rate $\lambda$ of the velocity become large. This is reminiscent of the usual diffusive/parabolic scaling for stochastic particle systems that leads to diffusive equations in the limit. This is exactly observed in our setting, with the limiting system given by the parabolic diffusion equation
\begin{subequations}\label{eq:intro_heat}
    \begin{align}
        \partial_t \rho + \partial_x \omega &= 0 \, , \\
        \omega &= - \alpha \, \partial_x \rho \, .
    \end{align}
\end{subequations}
Such a limit is also observed by formally passing $\tau\rightarrow 0$ in the original FC system~\eqref{Cattaneo2}.

The damped hyperbolic system~\eqref{damped_hyperbolic} presents another interesting limit when the switching rate~$\lambda$ vanishes while the speed~$V$ remains constant. In this case, we expect that any smoothing effect is completely removed: the initial mass is transported across space without being distorted. From \eqref{damped_hyperbolic}, we may directly infer the limit equations
\begin{subequations}\label{eq:intro_wave}
\begin{align}
    \partial_t \rho &= - \partial_x \omega \, , \\
    \partial_t \omega &= -V^2 \partial_x \rho \,.
\end{align}
\end{subequations}
This is a wave equation with speed of propagation~$V$.

We use a variational technique developed in~\cite{DLPS17} to study these limits. To illustrate the idea, assume that the family of pairs $(\sigma^\eps,j^\eps)$ is a variational (equivalently, weak) solution of the Kac equation~\eqref{eq:strong-form}, parameterized by some scale-separation parameter $\eps>0$; $\eps=1/V, \, \lambda$ in the diffusive (with $\lambda$ rewritten in terms of $V,\alpha$) and hyperbolic limit respectively. Our aim is to establish the behaviour of the system as $\eps\rightarrow 0$.
Since the solutions are characterized by the rate functional $\scrI^\eps$ via $\scrI^\eps(\sigma^\eps,j^\eps)=0$, we establish the asymptotic behaviour as $\eps\rightarrow 0$ by answering the following two questions:
\begin{enumerate}
    \item \emph{Compactness:} Do solutions of $\scrI^\eps(\sigma^\eps,j^\eps)=0$ have sufficient compactness properties allowing  one to extract a converging subsequence in a suitable topology $\mathbb{t}$?
    \item \emph{Liminf inequality:} Is there a limit functional $\bar\scrI\geq 0$ such that 
    \begin{equation*}
        (\sigma^\eps,j^\eps) \overset{\mathbb{t}}{\longrightarrow} (\bar\sigma,\bar \jmath) \ \Longrightarrow \ \liminf_{\eps\searrow 0} \scrI^\eps(\sigma^\eps,j^\eps)\geq \bar\scrI(\bar\sigma,\bar
        \jmath)? 
    \end{equation*}
    And if so, does one have the equivalence
    \begin{equation*}
       \bar\scrI(\bar\sigma,\bar \jmath) = 0 \quad \Longleftrightarrow \quad (\bar\sigma,\bar \jmath) \text{ solves a well-posed limit dynamics?}
    \end{equation*}
\end{enumerate}
We answer both these questions for approximate solutions, i.e., for pairs $(\sigma^\eps,j^\eps)$ having finite rate function $\scrI^\eps(\sigma^\eps,j^\eps)<\infty$ and with well-prepared initial data---note that the right hand side of the FIR inequality \eqref{eq:intro_FIR} corresponds to exactly these requirements. The asymptotic behaviour of the exact solutions $\scrI^\eps(\sigma^\eps,j^\eps)=0$ is a special case of our analysis. 
The proofs of both these steps in the variational technique crucially hinge on a dual formulation of the rate functionals (Section~\ref{subsec-Kac-var}). 

In the diffusive limit, the limit functional $\bar\scrI$~\eqref{def:LimFun-diff} turns out to be a reformulation of the Wasserstein gradient structure for the diffusion equation \eqref{eq:intro_heat} (see Remark~\ref{rem:Wasserstein}), suggesting that the corresponding limit pairs $(\bar\rho,\bar\jmath)$ are Wasserstein-gradient-flow solutions of the diffusion equation. At the stochastic particle-system level, this limit can be seen as a transformation of Poissonian noise to Brownian noise as reflected in the quadratic-Wasserstein limiting functional.
In the hyperbolic limit, instead, the limit functional $\bar\scrI$~\eqref{eq:wave-var-form} trivializes, i.e., it takes the value zero for pairs $(\rho,\omega)$ satisfying \eqref{eq:intro_wave} and $+\infty$ otherwise. While in stark contrast with the diffusive limit, it is consistent with the variational structure for the Kac equation when $\lambda\to 0$, suggesting the `deterministic' behavior of particle trajectories. At the stochastic particle-system level, this limit can be seen as a complete removal of randomness which leads to the trivial limiting functional.

\subsubsection*{\textbf{Novelty.}} Considerable literature has been devoted to the study of variational structures for gradient flows in the last two decades and exploiting them to study asymptotic limits \cite{JKO98,Otto01,GiacomelliOtto01,SandierSerfaty04,CarrilloMcCannVillani06,AmbrosioGigliSavare08,PortegiesPeletier10,Mielke14TR,ArnrichMielkePeletierSavareVeneroni12}. In recent years, connections with underlying particle systems via large deviations have been exploited to expand this class to systems with additional non-dissipative effects, albeit mostly for diffusive systems~\cite{AdamsDirrPeletierZimmer11,AdamsDirrPeletierZimmer13,MPR14,DuongPeletierZimmer13,DLPS17,HPST20,PRST22}. In this work, we push variational structures towards hyperbolic equations via large deviations. The evolution of the flux in~\eqref{damped_hyperbolic} depends on the flux itself, which makes our system and the corresponding analysis different from the latter literature on non-dissipative systems where the evolution of the flux only depends on the density, effectively making the flux a dummy variable at the macroscopic level (see Section~\ref{sec:Dis} for a discussion).  

It should be noted that the well-posedness of the FC system in arbitrary dimensions (and many other related models) can be established using classical techniques for hyperbolic equations. Furthermore, the study of the asymptotic limits for the FC system is a classical problem that has been discussed by Kac~\cite{mK74} and others~\cite{OthmerHillen00,GBC16,GBC17d}. Since we are interested in the FC system as arising from a stochastic system, we consider it as a measure-valued evolution, in contrast to the classical hyperbolic framework. Consequently, our variational solution-concepts also differ from the classical literature. Furthermore, we study the asymptotic limits of the FC system via the convergence of the associated variational structures (following ideas in~\cite{DLPS17,HPST20}), which corresponds to the convergence of the FC system and fluctuations around it and requires minimal conditions on the initial data.

\subsection{Summary of the notation}

\begin{center}
\begin{longtable}{@{\extracolsep{\stretch{1}}}*{3}{l}@{}}
\toprule
  $\T$ & One-dimensional torus  & \\
  $V$ & Speed of the Kac particles \\
  $\lambda$ & Velocity switch rate of the Kac particles \\
  $\alpha \coloneqq V^2 / (2 \lambda)$ & Diffusivity in the FC system \\
  $\Omega_V$ & $\Omega_V \coloneqq \T \times \{-V, V\}$ & \\
  $\Omega_1$ & $\Omega_1 \coloneqq \T \times \{-1, 1\}$ & \\
  $\iota$ & Velocity-reversal map $\iota(x,v)=(x,-v)$ & Sec.~\ref{sec:Poisson-Kac} \\ 
  $\pi$ & Stationary measure for the Kac equation & Sec.~\ref{subsec:Kac-sol}\\
  $\mathcal L_{\mathcal X}$ & Lebesgue measure on the set $\mathcal X$ & \\
  $\operatorname{Unif}_{\!\mathcal Y}$ & Uniform measure on the set $\mathcal Y$ & \\
  $\calP(\mathcal X)$ & Space of probability measures on $\mathcal X$ & \\ 
  $\calM(\mathcal X)$ & Space of finite, signed, Borel measures on $\mathcal X$ &  \\
  $\|\cdot\|_{\TV}$ & Total-variation norm on measures & \eqref{def:TV-norm} \\
  $\dnabla$, $\ddiv$ & Gradient and divergence operators on $\Omega_V$ & \eqref{def:Omega-der}\\
  $\CE(0,T;\Omega_V)$ & Pairs $(\sigma,j)$ satisfying the continuity equation on $\Omega_V$ & Def.~\ref{def:CE}\\
  $\CE(0,T;\T)$ & Pairs $(\rho,\omega)$ satisfying the continuity equation on $\T$ & Lem.~\ref{lem:barOmega-ac}\\
  $\ME(0,T;\T)$ & Triples $(\sigma,\omega,K)$ satisfying the momentum equation on $\T$ & Def.~\ref{def:ME}\\
  $\scrI$ & Rate function for the Kac equation  & \eqref{eq:LDRF}\\
  $\scrJ$ & Rate function for the FC system & \eqref{IrhoomegaJ} \\
  $\scrH$, $\scrL$ & Hamiltonian and Lagrangian for the Kac equation & \eqref{Hamiltonian},~\eqref{eq:fin-Lag} \\
  $\scrI^V$ & Rate function used in the diffusive limit  & \eqref{eq:rescaled-RF}\\
  $\scrI^\lambda$ & Rate function used in the hyperbolic limit  & Sec.~\ref{subsec:hyperbolic}\\
  $\Ent(\cdot|\cdot)$ & Relative entropy  & \eqref{def:relEnt}\\
  $\FI(\cdot|\pi)$ & Fisher information with respect to $\pi$ & \eqref{def:FI}\\
  $d_{\BL}(\cdot,\cdot)$ & Bounded Lipschitz metric on probability measures & \eqref{eq:BL-metric} \\
  \bottomrule                            
\end{longtable}
\end{center}

Throughout, we use common measure-theoretic notation and terminology. For a measure $\sigma\in \calM([0,T]\times\mathcal{X})$, for instance, we often
write $\sigma_t\in\calM(\mathcal X)$  for the time slice at time $t$; we also often use both the notation $\rho(x)\,dx$ and $\rho(dx)$ when $\rho$ has Lebesgue density. We equip $\calM(\mathcal X)$ and $\calP(\mathcal X)$ with the narrow topology, in which the convergence is characterized by duality with continuous and bounded functions on $\mathcal X$. We equip $C(B;\calP(\mathcal X))$ and $C(B;\calM(\mathcal X))$ with the uniform topology in $B\subseteq \mathbb R$ and the narrow topology in $\mathcal X$.


\section{Solution concepts, continuity equation, and variational formulation}\label{sec:variational}

The Kac equation~\eqref{marginals} and the FC system~\eqref{damped_hyperbolic} are the two main evolution equations studied in this article. In what follows, we introduce two solution concepts---the one of a weak solution and of a variational solution---for the Kac equation, where the latter makes use of a variational structure. Theorem~\ref{prop:exist} discusses the equivalence of these two notions. 
Both these solution concepts carry over to the FC system using the bijection~\eqref{eq:intro-biject} as is clarified in Theorem~\ref{thm:Kac-wellposed}.

\subsection{Solution concepts for the Kac equation}\label{subsec:Kac-sol}

 Recall the Kac equation
 \begin{align}\label{eq:strong-form}
   \left\{\quad \begin{aligned} \partial_t\sigma + v\,\partial_x \sigma &= \lambda \, (\iota_\sharp\sigma - \sigma) \, , \\
        \sigma|_{t=0} &= \sigma_0\in\calP(\Omega_V) \, ,
    \end{aligned}\right.
 \end{align}
where $(\iota_\sharp\sigma)(\cdot,v)=\sigma(\cdot,-v)$.
Note that this evolution admits the uniform distribution $\stat \coloneqq \calL_{\T} \otimes \operatorname{Unif}_{\!\{-V, V\}}$ as the unique invariant measure.

\begin{defi}[Weak solution]\label{def:weak-sol-Kac}
The curve $\sigma\in C([0,T];\mathcal P(\Omega_V))$ in the space of probability measures is a \emph{weak solution} to the Kac equation~\eqref{eq:strong-form} if
\begin{enumerate}
    \item $\sigma|_{t=0}=\sigma_0$, 
    \item for any $\varphi\in C^{1,0}(\Omega_V)$ and $0 \leq s \leq t \leq T$,  
 \begin{align}\label{eq:weak-formulation}
     \int_{\Omega_V} \varphi(x,v)\,\sigma_t(dxdv) - \int_{\Omega_V} \varphi(x,v)\,\sigma_s(dxdv) = \int_s^t \int_{\Omega_V} (Q\varphi)(x,v)\,\sigma_r(dxdv)\,dr \, ,
 \end{align}
 where the dependence on time is indicated in the subscript and the generator $Q$ is defined as
 \begin{equation}\label{eq:gen}
 (Q\varphi)(x,v)\coloneqq v\,\partial_x \varphi(x,v) + \lambda \, \bigl( (\varphi\circ\iota)(x,v)-\varphi(x,v) \bigr) \, .
 \end{equation}
\end{enumerate}
\end{defi} 
 
\medskip
 
The existence and uniqueness of weak solutions to the Kac equation will be discussed at the end of this section in Theorem~\ref{prop:exist}. In what follows, we will often make use of the following characterisation of the  total-variation (TV) norm. For $\sigma\in\mathcal M(\mathcal X;\R^d)\coloneqq \{\sigma=(\sigma_1,\ldots,\sigma_d): \sigma_i\in\calM(\mathcal X) \text{ for } 1\leq i\leq d \}$,
\begin{equation}\label{def:TV-norm}
\lVert\sigma\rVert_{\TV} \coloneqq \sup
\Bigl\{ \int_{\mathcal X}\varphi\cdot\,d\sigma : \varphi \in C(\mathcal X;\R^d),\; \lvert \varphi_i \rvert \leq 1 \;\;\text{for all $i=1,\ldots,d$}\Bigr\}\,.
\end{equation}

We now introduce the notion of a continuity equation which connects a flux~$j=(j^1, j^2)$ to a probability measure~$\sigma$. Such concepts are standard in nonequilibrium thermodynamics \cite[Chapter~II]{dGM84} (it is a special case of a so-called ``balance equation'' without a source term) and variational literature \cite[Section~8.1]{AmbrosioGigliSavare08}, \cite[Def.~4.1]{PRST22}.

\begin{defi}[\textbf{C}ontinuity \textbf{E}quation]\label{def:CE} The pair $(\sigma,j)\in\CE(0,T;\Omega_V)$  if
\begin{enumerate}
    \item $\sigma\in C([0,T];\calP(\Omega_V))$,
    \item $(j_t)_{t\in (0,T)}\subset \calM( \Omega_V;\R^2)$ is a measurable family satisfying
    \[
        \int_0^T \|j_t\|_{\TV}\,dt <\infty\,,
    \]
    \item\label{item:CE-weak} for any $\varphi\in C^{1,0}(\Omega_V)$ and $0\leq s\leq t\leq T$, 
 \begin{equation}\label{eq:CE-IntegralForm}
  \langle \varphi, \sigma_t\rangle - \langle \varphi, \sigma_s \rangle = \int_s^t \langle \dnabla \varphi, j_r\rangle\, dr \, ,
\end{equation}
where $\langle a,b \rangle=\int_{\Omega_V}a \cdot db$ and the divergence and gradient operators are defined as
 \begin{equation}\label{def:Omega-der}
 \dnabla \varphi \coloneqq  \bigl(\partial_x \varphi, \dpartial_v\varphi\bigr) \, , \quad -\ddiv j \coloneqq -\partial_x j^1 + \iota_\sharp j^2 - j^2
\, , \quad \dpartial_v\varphi\coloneqq\varphi \circ \iota - \varphi \, .
 \end{equation}
\end{enumerate}
\end{defi}

\begin{rem}\label{rem:sol-CE} 
A weak solution (see Definition~\ref{def:weak-sol-Kac}) to the the Kac equation~\eqref{eq:strong-form} can be written in the form~\eqref{eq:CE-IntegralForm} with $j\coloneqq(v\,\sigma,\lambda\,\sigma)$, since  
 \begin{align*}
     \int_{\Omega_V} (Q\varphi)(x,v)\,\sigma_t(dxdv) &= \int_{\Omega_V} \bigl(v\,\partial_x \varphi(x,v) + \lambda \, \dpartial_v\varphi(x,v)\bigr)\,\sigma_t(dxdv) \\
     &= \int_{\Omega_V} \dnabla\varphi(x,v) \cdot (v, \lambda) \,\sigma_t(dxdv) = \int_\Omega \dnabla\varphi\cdot d j_t \, .
 \end{align*}
 Therefore, a weak solution to the Kac equation with initial datum~$\sigma_0$ also satisfies the continuity equation with  $\bigl(\sigma, (v\,\sigma, \lambda\,\sigma)\bigr) \in \CE(0, T; \Omega_V)$.
\end{rem}

The continuity equation above is defined in terms of time-independent test functions. However, in the proofs of asymptotic limits in Section~\ref{sec:prop_lim} (Lemma~\ref{lem:rho-bv} in particular) we will need to use time-dependent test functions in the continuity equation because of the lack of control on the temporal regularity of the fluxes.
\begin{lem}\label{lem:time-space-weak} Fix $V\geq 1$ and $(\sigma,j)\in \CE(0,T;\Omega_V)$. For any $\chi\in C^1_c((0,T))$ and $\varphi\in C^{1,0}(\Omega_V)$:
\begin{equation*}
    \int_0^T\int_{\Omega_V}\dot\chi(t)\,\varphi(x,v)\,\sigma_t(dxdv)\,dt = -\int_0^T\int_{\Omega_V}\chi(t)\, \dnabla\varphi(x,v)\cdot j_t(dxdv)\,dt \, .
\end{equation*}
\end{lem}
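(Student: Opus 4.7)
The statement is essentially an integration-by-parts in time applied to the distributional identity already encoded in the continuity equation. The plan is to fix $\varphi\in C^{1,0}(\Omega_V)$, define
\[
F(t) \coloneqq \int_{\Omega_V}\varphi(x,v)\,\sigma_t(dxdv),\qquad t\in[0,T],
\]
and show that $F$ is absolutely continuous, with an explicit $L^1$ derivative given by the flux. This reduces the lemma to the one-variable integration-by-parts formula for absolutely continuous functions against $\chi\in C^1_c((0,T))$.

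\textbf{Step 1: Absolute continuity of $F$.} Because $\Omega_V=\T\times\{-V,V\}$ is compact, the test function $\varphi$ and its $x$-derivative are uniformly bounded, hence $\dnabla\varphi=(\partial_x\varphi,\varphi\circ\iota-\varphi)\in C(\Omega_V;\R^2)$ is bounded. The measurability assumption on the family $(j_t)$ in Definition~\ref{def:CE}(2) implies that $r\mapsto \langle \dnabla\varphi,j_r\rangle$ is measurable, and by \eqref{def:TV-norm}
\[
\bigl|\langle\dnabla\varphi,j_r\rangle\bigr|\leq \|\dnabla\varphi\|_\infty\,\|j_r\|_{\TV},
\]
which is integrable in $r$ by Definition~\ref{def:CE}(2). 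Item~\ref{item:CE-weak} of Definition~\ref{def:CE} yields
\[
F(t)-F(s)=\int_s^t \langle\dnabla\varphi,j_r\rangle\,dr\qquad \text{for every }0\leq s\leq t\leq T,
\]
so $F$ is absolutely continuous on $[0,T]$ with $\dot F(r)=\langle \dnabla\varphi,j_r\rangle$ for almost every $r$.

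\textbf{Step 2: One-dimensional integration by parts.} For $\chi\in C^1_c((0,T))$, the classical integration-by-parts formula for AC functions gives
\[
\int_0^T \dot\chi(t)\,F(t)\,dt = -\int_0^T \chi(t)\,\dot F(t)\,dt + \bigl[\chi F\bigr]_0^T = -\int_0^T \chi(t)\,\langle\dnabla\varphi,j_t\rangle\,dt,
\]
where the boundary term vanishes because $\chi$ has compact support in $(0,T)$. Unfolding the pairings $F(t)=\int_{\Omega_V}\varphi\,d\sigma_t$ and $\langle\dnabla\varphi,j_t\rangle=\int_{\Omega_V}\dnabla\varphi\cdot dj_t$ yields the claimed identity.

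\textbf{Main obstacle.} There is no real obstacle; the only point requiring attention is ensuring that $r\mapsto\langle\dnabla\varphi,j_r\rangle$ is in $L^1(0,T)$, which is what makes the elementary integration-by-parts for absolutely continuous functions applicable. Once this is in place, no approximation of $\chi$ or $\varphi$ by simpler test functions is needed; in particular, we do not have to go through tensor products of the form $\chi(t)\varphi(x,v)$ via the original Definition~\ref{def:CE}(3), since the AC reformulation of the continuity equation subsumes it.
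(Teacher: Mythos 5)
Your proof is correct, and it is genuinely different from—and substantially shorter than—the paper's. You observe that Definition~\ref{def:CE}\eqref{item:CE-weak} says precisely that $F(t)\coloneqq\langle\varphi,\sigma_t\rangle$ is given by a primitive of the $L^1$ function $r\mapsto\langle\dnabla\varphi,j_r\rangle$, so $F$ is absolutely continuous with this function as a.e.\ derivative. The lemma then becomes the product rule for an absolutely continuous $F$ against $\chi\in C^1_c((0,T))$: $(F\chi)'=\dot F\chi+F\dot\chi$ a.e., $\int_0^T(F\chi)'\,dt=0$ by compact support, and you unfold. The paper instead argues directly with difference quotients: it forms $h^{-1}(\chi(t)-\chi(t-h))$, rewrites the resulting integral via the continuity equation into $-\int\chi(t)\,h^{-1}\int_t^{t+h}\langle\dnabla\varphi,j_s\rangle\,ds\,dt$, and then proves by hand (via Lusin's theorem and an $L^1_{\mathrm{loc}}$ approximation argument) that the Steklov average converges to the integrand. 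That is, the paper essentially re-derives the relevant consequence of absolute continuity from scratch rather than quoting the AC integration-by-parts theorem. Your route is cleaner and reaches the same conclusion; the paper's is more self-contained at the cost of length. One small point worth stating explicitly in your write-up is that measurability of $r\mapsto\langle\dnabla\varphi,j_r\rangle$ is what the phrase ``measurable family'' in Definition~\ref{def:CE}(2) supplies—you use it but could make it more prominent, since it is the only hypothesis besides the integral identity that your argument actually needs.
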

\begin{proof}
For any $\chi\in C^1_c((0,T))$, $\varphi\in C^{1,0}(\Omega_V)$, and for sufficiently small $h>0$, we find
\begin{align*}
    \int_0^T&\int_{\Omega_V} \mathbb 1_{[h,T]}(t) \, \frac{\chi(t)-\chi(t-h)}{h} \, \varphi(x,v) \, \sigma_t(dxdv) \, dt = \\ &=\frac1h\int_h^T\bigl(\chi(t)-\chi(t-h)\bigr) \, \langle \varphi,\sigma_t \rangle \, dt \\
    &= \frac1h\int_h^T\chi(t) \, \langle \varphi,\sigma_t \rangle \, dt - \frac1h\int_0^{T-h}\chi(t) \, \langle \varphi,\sigma_{t+h} \rangle \, dt\\
    &= - \frac1h\int_{h}^{T-h}\chi(t) \, \bigl(\langle \varphi,\sigma_{t+h} \rangle-\langle \varphi,\sigma_{t} \rangle \bigr) \, dt - \frac1h \int_0^h \chi(t) \, \langle \varphi,\sigma_{t+h} \rangle \, dt + \frac1h \int_{T-h}^{T} \chi(t) \, \langle \varphi,\sigma_{t+h} \rangle \, dt \\
    &=- \int_{0}^{T}\mathbb 1_{[h,T-h]}(t) \, \chi(t) \, \frac1h\int_t^{t+h}\langle \dnabla\varphi,j_s \rangle \, ds\, dt \, ,
\end{align*}
where the final equality follows since $(\sigma,j)\in\CE(0,T;\Omega_V)$ and the last two terms vanish since $h$ may be chosen so that $[h, T-h]$ fully contains the support of~$\chi$.


Note that by the definition of the continuity equation, $t\mapsto \|j_t\|_{\TV} \in L^1((0,T))$ and therefore $s \mapsto \langle \dnabla\varphi, j_s\rangle$ also belongs to $L^1((0, T))$. In the following we will prove that 
\begin{equation*}
     \int_{0}^{T}\mathbb 1_{[h,T-h]}(t) \, \chi(t)  \frac1h\int_t^{t+h}\langle \dnabla\varphi,j_s \rangle \, ds dt  \xrightarrow{h\rightarrow 0} \int_{0}^{T}\mathbb \chi(t) \, \langle \dnabla\varphi, j_t \rangle \, dt \, .
\end{equation*}
To prove this, we only need to show that for any $f\in L^1_{\mathrm{loc}}((0,T))$ we have
\[
    g^h(t):=\frac{1}{h}\int_t^{t+h} f(s)\,ds \longrightarrow f(t)\qquad\text{in }L^1_{loc}((0, T)) \, ,
\]
i.e., $\|g^h-f\|_{L^1(K)}\to 0$ as $h\to 0$ for every compact set $K\subset (0,T)$. By Lusin's theorem, we find a sequence $(f_j)_j\subset \calC_c((0,T))$ satisfying
\[
    \|f_j-f\|_{L^1((0, T))}\to 0 \, .
\]
Furthermore, for each $j\in\mathbb{N}$ and $h\ll 1$ sufficiently small,
\begin{align*}
    A_j^h \coloneqq \int_0^T \biggl\lvert\frac{1}{h}\int_t^{t+h} f_j(s)\,ds - f_j(t)\biggr\rvert \, dt
    \le \int_0^{1}  \int_0^T \lvert f_j(t+hs)-f_j(t)\rvert \,dt\,ds \;\;\xrightarrow{h\to 0}\;\; 0 \, ,
\end{align*}
due to the uniform continuity of $f_j$. An application of the triangle inequality yields
\begin{align*}
    \|g^h - f\|_{L^1(K)} \le \biggl\| g^h - \frac{1}{h}\int_{\cdot}^{\cdot+h} f_j(s)\,ds\biggr\|_{L^1(K)} + A_j^h + \|f_j-f\|_{L^1((0, T))} \, .
\end{align*}
For $h\ll 1$, the first term may be bounded from above by
\[
 \int_0^{1} \int_h^{T-h}|f(hs+t) - f_j(hs+t)|\,dt\,ds = \int_0^1 \int_{h(s+1)}^{T+ h(s-1)} |f(t) - f_j(t)|\,dt\,ds \le \|f-f_j\|_{L^1((0, T))} \,.
\]
Consequently, we can pass first to the limit $h\to 0$ and then $j\to \infty$ to deduce the asserted convergence.

Since $\chi\in C^1_c((0,T))$, using the dominated convergence theorem (and the mean value theorem to provide an upper bound), we find
\begin{equation*}
    \lim_{h\rightarrow 0}\int_0^T\int_{\Omega_V} \mathbb 1_{[h,T]}(t) \, \frac{\chi(t)-\chi(t-h)}{h} \, \varphi(x,v) \, \sigma_t(dxdv)dt=\int_0^T\int_{\Omega_V}  \dot\chi(t) \, \varphi(x,v) \, \sigma_t(dxdv) \, dt
\end{equation*}
and thus arrive at the required result.
\end{proof}

\subsection{Variational structure for the Kac equation}\label{subsec-Kac-var}
The goal of this section is to introduce a variational formulation for the Kac equation \eqref{eq:strong-form} that will (i) induce a variational structure on the FC system (Section~\ref{sec:Var}) and (ii) be used to perform the asymptotic limits in Section~\ref{sec:Limits}.

We define the functional $\scrI \colon C([0,T];\calP(\Omega_V))\times \calM([0,T];\calM(\Omega_V;\R^2))\rightarrow[0,+\infty]$ by
\begin{equation}\label{eq:LDRF}
\scrI(\sigma,j)\coloneqq\begin{dcases*}
\displaystyle
\int_0^T \scrL(\sigma_t,j_t) \, dt \ \ &if $(\sigma,j)\in \CE(0,T;\Omega_V)$\,,\\
+\infty &otherwise.
\end{dcases*}
\end{equation}
Since this functional is inspired by the large-deviation rate functional corresponding to the Kac process (see \appendixname~\ref{sec:LD}), hereafter we will refer to~\eqref{eq:LDRF} as the rate functional. 

The Lagrangian $\scrL \colon \calP(\Omega_V) \times \calM(\Omega_V;\R^2) \to [0, +\infty]$,
\begin{equation}\label{def:Lag-CE}
    \scrL(\sigma,j) = \sup_{\varphi\in C(\Omega_V; \R^2)} \! \bigl( \langle\varphi,j\rangle - \scrH(\sigma,\varphi)\bigr) \, ,
\end{equation}
is the Legendre dual of the Hamiltonian
\begin{equation}\label{Hamiltonian}
 \scrH(\sigma,\varphi) = \int_{\Omega_V} \bigl[ v\, \varphi_1(x,v) + \lambda \, \bigl( e^{\varphi_2(x,v)}-1\bigr) \bigr] \, \sigma(dxdv) \, .
\end{equation}
Since
\begin{equation*}
\langle \varphi,j\rangle - \scrH(\sigma,\varphi) = \int_{\Omega_V} \varphi_1\,\bigl( j^1 - v\,\sigma\bigr) + \int_{\Omega_V} \bigl[ \varphi_2\, j^2 - \lambda \, \bigl(e^{\varphi_2}-1\bigr) \, \sigma \bigr] \, ,
\end{equation*}
we deduce that
\begin{equation}\label{eq:fin-Lag}
\begin{aligned}
    \scrL(\sigma,j) &= \begin{cases}
     \displaystyle\sup\limits_{\varphi\in C(\Omega_V)} \int_{\Omega_V} \bigl[ \varphi\, dj^2 - \lambda\bigl(e^\varphi-1\bigr)  \, d\sigma \bigr] &\text{if $j^1=v\,\sigma$}, \\
     +\infty & \text{otherwise,} 
    \end{cases} \\
    &= \begin{cases}
        \Ent(j^2 \mid \lambda \, \sigma) &\text{if $j^1=v\,\sigma$}, \\
        +\infty & \text{otherwise},
    \end{cases}
\end{aligned}
\end{equation}
where $\Ent(\cdot|\cdot)$ is the relative entropy on $\calM(\Omega_V)\times\calM(\Omega_V)$, defined as
\begin{equation}\label{def:relEnt}
\Ent(\mu\mid\nu)\coloneqq
\begin{dcases} 
\int_{\Omega_V}(f\log f -f +1 )\,d\nu \quad & \text{if } \mu\ll \nu \text{ with } f\coloneqq \frac{d\mu}{d\nu} \, ,
\\
+\infty & \text{otherwise}.
\end{dcases}
\end{equation}
Note that $\scrH$ is convex in the second argument and, therefore, $\scrL(\sigma,\cdot)$ and $\scrH(\sigma,\cdot)$ are convex bi-duals. Furthermore, $\scrI\geq 0$, which is seen by choosing $\varphi=0$ in~\eqref{eq:fin-Lag}. 

\begin{rem}
 In the Hamiltonian~\eqref{Hamiltonian}, the cotangent vectors~$\varphi$ are functions on the state space~$\Omega_V$ instead of functions on the `space of edges' $T\Omega_V \cong \Omega_V \times \Omega_V$, as would be expected in the general case of jump processes \cite{PRST22}. This discrepancy is due to the identification of jump kernels on $\Omega_V$ with measures on $\Omega_V$ that we make at the end of \appendixname~\ref{sec:LD}.
\end{rem}
We now introduce the notion of a variational solution for the Kac equation as the zero level set of the rate functional~\eqref{eq:LDRF}.
\begin{defi}[Variational solution]
The curve $\sigma\in C([0,T];\calP(\Omega_V))$ is a \emph{variational solution} to the Kac equation~\eqref{eq:strong-form} if there exists a measurable family $(j_t)_{t\in(0,T)}\subset\calM(\Omega_V;\R^2)$ such that the pair $(\sigma,j)\in\CE(0,T;\Omega_V)$ and 
\[
    \Ent\bigl(\sigma|_{t=0}\mid\sigma_0\bigr) + \scrI(\sigma,j) =0 \, .
\]
\end{defi}
The following result discusses the existence and uniqueness of solution to the Kac equation~\eqref{eq:strong-form} and the equivalence of the two solution concepts introduced above.  
\begin{theorem}\label{prop:exist}
Given $\sigma_0\in\calP(\Omega_V)$, there exists a unique weak solution $\sigma\in C([0,T];\calP(\Omega_V))$ to the Kac equation~\eqref{eq:strong-form}. Moreover, $\sigma$ is a weak solution of~\eqref{eq:strong-form} if and only if it is a variational solution.
\end{theorem}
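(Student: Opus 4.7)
The plan is to treat the theorem in two independent parts: well-posedness of weak solutions, and the equivalence of the weak and variational notions. For the first part I would use standard semigroup/Duhamel arguments, exploiting that the jump operator $\lambda(\iota_\sharp - I)$ is a bounded linear perturbation of free transport on $(\calM(\Omega_V),\|\cdot\|_{\TV})$. The equivalence then follows almost immediately from the explicit expression~\eqref{eq:fin-Lag} for the Lagrangian together with Remark~\ref{rem:sol-CE}.

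\textbf{Existence and uniqueness.} Let $T_t(x,v) \coloneqq (x+vt,v)$ denote the free transport flow on $\Omega_V$. I would cast~\eqref{eq:strong-form} in the Duhamel form
\[
    \sigma_t = e^{-\lambda t}(T_t)_\sharp \sigma_0 + \lambda \int_0^t e^{-\lambda(t-s)} (T_{t-s})_\sharp\,\iota_\sharp \sigma_s\,ds,
\]
which, by standard semigroup theory, is equivalent to the weak formulation~\eqref{eq:weak-formulation}. Since $(T_t)_\sharp$ and $\iota_\sharp$ are isometries of $(\calM(\Omega_V),\|\cdot\|_{\TV})$ that preserve $\calP(\Omega_V)$, the right-hand side defines a contraction on $C([0,T];\calM(\Omega_V))$ in a suitably weighted TV norm; Banach's fixed-point theorem yields a unique fixed point that a posteriori lies in $C([0,T];\calP(\Omega_V))$ because the iteration manifestly preserves positivity and total mass. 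Uniqueness at the weak level follows independently by a Gronwall estimate on $t\mapsto\|\sigma^{(1)}_t - \sigma^{(2)}_t\|_{\TV}$ for two weak solutions with the same initial datum.

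\textbf{Equivalence.} If $\sigma$ is a weak solution, set $j \coloneqq (v\sigma,\lambda\sigma)$. By Remark~\ref{rem:sol-CE}, $(\sigma,j)\in\CE(0,T;\Omega_V)$, and~\eqref{eq:fin-Lag} gives $\scrL(\sigma_t,j_t) = \Ent(\lambda\sigma_t\mid\lambda\sigma_t) = 0$ for every $t$; combined with $\sigma|_{t=0}=\sigma_0$, this yields $\scrI(\sigma,j)+\Ent(\sigma|_{t=0}\mid\sigma_0)=0$. Conversely, let $\sigma$ be a variational solution with flux $j$. Since $\Ent(\mu\mid\nu)$ vanishes only at $\mu=\nu$, the initial-entropy term forces $\sigma|_{t=0}=\sigma_0$; finiteness of $\scrI(\sigma,j)$ forces $j^1=v\sigma$; and $\scrI(\sigma,j)=0$ together with~\eqref{eq:fin-Lag} forces $\Ent(j^2_t\mid\lambda\sigma_t)=0$, i.e.\ $j^2_t=\lambda\sigma_t$, for a.e.\ $t$. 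Plugging these identifications into~\eqref{eq:CE-IntegralForm} and using
\[
    \dnabla\varphi\cdot (v\sigma,\lambda\sigma) = \bigl(v\,\partial_x\varphi + \lambda(\varphi\circ\iota-\varphi)\bigr)\,\sigma = (Q\varphi)\,\sigma
\]
reproduces exactly~\eqref{eq:weak-formulation}, so $\sigma$ is a weak solution. The only mildly delicate point in this argument is that the identifications of $j^1$ and $j^2$ above hold only for a.e.\ $t\in[0,T]$; this is nevertheless enough to close the continuity equation, because its integrand $\langle\dnabla\varphi,j_r\rangle$ depends on $j$ only through its time integrals.
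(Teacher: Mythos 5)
Your proposal is correct, and the equivalence part is essentially identical to the paper's proof: weak $\Rightarrow$ variational via Remark~\ref{rem:sol-CE} and the choice $j=(v\sigma,\lambda\sigma)$ with $\scrL=0$; variational $\Rightarrow$ weak by forcing $j^1=v\sigma$ and $j^2=\lambda\sigma$ a.e.\ from $\scrL\ge 0$, and then plugging into the integral form of the continuity equation (you are also right that a.e.\ identification suffices, for precisely the reason you give). Where you depart from the paper is the well-posedness step: the paper does not construct anything, it simply invokes the fact that~\eqref{eq:strong-form} is the Kolmogorov forward equation of a nice Markov process whose generator satisfies the maximum principle, and cites~\cite[Chapter~4]{EthierKurtz09} for a unique martingale solution, from which uniqueness of the weak solution follows. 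Your route---Duhamel form
\[
 \sigma_t = e^{-\lambda t}(T_t)_\sharp\sigma_0 + \lambda\int_0^t e^{-\lambda(t-s)}(T_{t-s})_\sharp\iota_\sharp\sigma_s\,ds
\]
together with a Banach fixed-point argument in (weighted) TV norm---is more elementary and self-contained; it exploits that $\lambda(\iota_\sharp-I)$ is a bounded perturbation of the free-transport contraction semigroup, so the contraction constant $\lambda\int_0^t e^{-\lambda(t-s)}\,ds = 1-e^{-\lambda t}<1$ closes the fixed point on $[0,T]$ even without weighting. The one thing you should be explicit about when writing this out is the step ``by standard semigroup theory, [Duhamel] is equivalent to the weak formulation~\eqref{eq:weak-formulation}'': passing from a weak solution (tested against time-independent $\varphi$) to the Duhamel identity requires time-dependent test functions such as $\varphi_t = e^{(t^*-t)Q^*}\psi$, which needs a density/regularity argument of the type carried out in Lemma~\ref{lem:time-space-weak}; alternatively, the Gronwall route on $\|\sigma^{(1)}_t-\sigma^{(2)}_t\|_{\TV}$ that you mention independently again needs this duality. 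This is filled in easily but is the nontrivial content of the ``standard'' claim. In short: same equivalence argument, a legitimately different (and more hands-on) proof of well-posedness, with one step to spell out.
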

\begin{proof}
Since the solution to the Kac equation is the law of a Markov process with a generator that satisfies the maximum principle, classical results~\cite[Chapter~4]{EthierKurtz09} imply the existence of a unique martingale solution, which in turn implies the existence of a unique weak solution. 

We now discuss the equivalence of the two solution concepts. Assume that $\sigma\in C([0,T];\calP(\Omega_V))$ is a weak solution to~\eqref{eq:strong-form}. Then, using Remark~\ref{rem:sol-CE} with the choice $j_t=(v\,\sigma_t,\lambda\,\sigma_t)$ for any $t\in [0,T]$, the pair $(\sigma,j)\in \CE(0,T;\Omega_V)$. This choice yields $\scrI(\sigma,j)=0$ and, since $\sigma_t\rightarrow\sigma_0$ narrowly as $t\rightarrow 0$, we also have $\Ent(\sigma|_{t=0}|\sigma_0)=0$.  Therefore, $\sigma$ is a variational solution. 
We now assume that $\sigma\in C([0,T];\calP(\Omega_V))$ is a variational solution to~\eqref{eq:strong-form}. Since $\scrL\geq 0$, it follows that $j_t=(v\,\sigma_t,\lambda\,\sigma_t)$ for almost every $t\in [0,T]$. 
Since $(\sigma,j)\in \CE(0,T;\Omega_V)$, using Definition~\ref{def:CE}(\ref{item:CE-weak}), we conclude that $\sigma$ is a weak solution of~\eqref{eq:strong-form}.
\end{proof}

\subsection{Variational structure for the FC system}\label{sec:Var}

In this section, we discuss the implications of a finite rate function $\scrI$ for the FC system~\eqref{damped_hyperbolic}. To do so, we make a change of variables from the probability measure $\sigma$ to $(\rho,\omega)$ and the corresponding fluxes. 

We begin by defining the bijection $\Pi_V\colon\calM(\Omega_V)\to \calM(\T)\times \calM(\T)$ as
\[
    \bigl(\Pi_Vj\bigr)(dx) \coloneqq \Bigl( \sum_{v\in\{-V,V\}}\!j(dx,v)\, , \sum_{v\in\{-V,V\}}\! v\,j(dx,v) \Bigr) \, ,
\]
with inverse
\[
    \bigl(\Pi_V^{-1} J\bigr)(dx,v) = \frac{1}{2}\left(J_1(dx) + \frac{1}{v}J_2(dx)\right),\qquad J=(J_1,J_2)\,.
\]
The \emph{density}~$\rho$ and the \emph{flux}~$\omega$, defined as (recall the motivating discussion in Section~\ref{sec:Poisson-Kac})
\begin{equation*}
    \rho(\di x) \coloneqq \sum\limits_{v \in \{-V, V\}} \! \sigma(\di x, v) \, ,\qquad
    \omega(\di x) \coloneqq \sum\limits_{v \in \{-V, V\}}\! v \, \sigma(\di x, v) \,,
\end{equation*}
are then given by $(\rho,\omega)=\Pi_V\sigma$.

Now, let $(\sigma,j)\in \CE(0,T;\Omega_V)$. Formally multiplying the continuity equation for $(\sigma,j)$ by $(1,v)$ and summing over $v\in\{-V,V\}$, we obtain the following linear system
\begin{equation}\label{CE_proj}
\begin{aligned}
    \partial_t \rho + \partial_x J_1^1 &= 0 \, , \\
    \partial_t \omega + \partial_x J_2^1 &= - 2 J^2_2 \, ,
\end{aligned}\qquad \text{with}\quad (\rho,\omega) = \Pi_V\sigma,\quad J^i = \Pi_Vj^i,\; i=1,2\,.
\end{equation}
Suppose that $\scrI(\sigma,j)<\infty$ with $\sigma= \Pi_V^{-1}(\rho,\omega)$ and $j^1=\Pi_V^{-1} J^1$. Then, the condition
\[
    (\Pi_V^{-1}J^1)(dx,v) = j^1(dx,v) = v\,\sigma(dx,v) =  v\,\bigl(\Pi_V^{-1}(\rho,\omega)\bigr)(dx,v)
\]
necessarily implies $J_1^1 = \omega$ and $J_2^1 = V^2\rho$. 

We then arrive at the functional 
$\scrJ\colon C([0,T];\calP(\T))\times C((0,T);\calM(\T)) \times \calM((0,T);\calM(\T;\R^2))\to [0,+\infty]$ given by
\begin{equation}\label{IrhoomegaJ}
    \scrJ(\rho,\omega,J) \coloneqq \begin{dcases*}
        \int_0^T \Ent\bigl(\Pi_V^{-1}J_t\mid \lambda\,\Pi_V^{-1}(\rho_t,\omega_t)\bigr) \, dt, \ \ &if $(\rho,\omega,J_2)\in \text{ME}(0,T;\T)$,\\
        +\infty, &otherwise,
\end{dcases*}
\end{equation}
where
$\text{ME}(0,T;\T)$ is the class of solutions to the linear ``momentum'' system
\begin{equation*}
    \partial_t \rho + \partial_x \omega = 0 \, , \qquad 
    \partial_t \omega + V^2\partial_x \rho = -2 J_2 \, ,
\end{equation*}
in the following sense.
\begin{defi}[\textbf{M}omentum \textbf{E}quation]\label{def:ME}
The triple $(\rho,\omega,K)\in \ME(0,T;\T)$ if
\begin{enumerate}
    \item $(\rho,\omega)\in C([0,T];\calP(\T))\times C((0,T);\calM(\T))$ 
    \item $(K_t)_{t\in (0,T)}\subset \calM( \T)$ is a measurable family satisfying
        \[
            \int_0^T \|K_t\|_{\TV}\,dt <\infty,
        \]
    \item\label{item:ME-weak} for any $\varphi,\psi\in C^{1}(\T)$ and $0 \leq s \leq t \leq T$,  
\begin{subequations}\label{eq:weak-FC}
\begin{align}
     \int_{\T} \varphi(x)\,\rho_t(dx) - \int_{\T} \varphi(x)\,\rho_s(dx) &= \int_s^t \int_{\T} \partial_x\varphi(x) \, \omega_r(dx) \, dr \, ,\\
     \int_{\T} \psi(x)\,\omega_t(dx) - \int_{\T} \psi(x)\,\omega_s(dx) &= V^2\int_s^t \int_{\T} \partial_x\psi(x) \, \rho_r(dx) \, dr - 2\int_s^t \int_{\T} \psi(x) \, K_r(dx) \, dr\,. 
 \end{align}
\end{subequations}
\end{enumerate}
\end{defi}

We now define the notion of a variational solution for the FC system.
\begin{defi}[Variational solution]
 The pair $(\rho, \omega)$ is a variational solution to the FC system~\eqref{damped_hyperbolic} if there exists a measurable family $(J_t)_{t\in(0,T)} \subset \calM(\T; \R^2)$ such that $(\rho, \omega, J_2) \in \ME(0, T; \T)$ and
 \begin{equation*}
  \Ent\bigl(\Pi_V^{-1}(\rho, \omega)\rvert_{t=0} \mid \Pi_V^{-1}(\rho_0, \omega_0)\bigr) + \scrJ(\rho, \omega, J) = 0 \, .
 \end{equation*}
\end{defi}

Notice that if $\scrI(\sigma,j)=0$, then also $\scrJ(\rho,\omega,\Pi_V j) =0$ with $(\rho,\omega)=\Pi_V\sigma$. Hence, a variational solution $(\sigma,j)$ of the Kac equation \eqref{eq:strong-form} gives a variational solution to the FC system \eqref{damped_hyperbolic}. Moreover, observe that $\scrJ(\rho,\omega,J) = 0$ implies $J_2=\lambda\,\omega$, and we recover a weak solution of the FC system~\eqref{damped_hyperbolic}, which we introduce next together with the well-posedness.

\begin{defi}[Weak solution]
The pair $(\rho,\omega)$ is a weak solution to the FC system~\eqref{damped_hyperbolic} with initial datum $(\rho_0,\omega_0)\in\calP(\T)\times\calM(\T)$ if $(\rho,\omega,\lambda\,\omega)\in\ME(0,T;\T)$ with  $(\rho,\omega)\rvert_{t=0} = (\rho_0,\omega_0)$.
\end{defi}

\begin{theorem}\label{thm:FC-wellposed}
Consider the initial datum $(\rho_0,\omega_0)\in\calP(\T)\times\calM(\T)$ satisfying the bounded-speed condition
\begin{equation}\label{eq:cone-condition}
    -V\rho_0\leq \omega_0 \leq V\rho_0\,.
\end{equation}
Then, there exists a unique weak solution to the FC system~\eqref{damped_hyperbolic}.
\end{theorem}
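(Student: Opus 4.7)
The plan is to transfer the well-posedness of the Kac equation (Theorem~\ref{prop:exist}) to the FC system through the bijection $\Pi_V$. The bounded-speed condition $-V\rho_0 \le \omega_0 \le V\rho_0$ is exactly the requirement that
\[
 \sigma_0(dx,\pm V) = \tfrac12\bigl(\rho_0(dx) \pm \omega_0(dx)/V\bigr)
\]
be a non-negative measure; since $\rho_0\in\calP(\T)$, the total mass is $1$, so $\sigma_0 = \Pi_V^{-1}(\rho_0,\omega_0)\in\calP(\Omega_V)$. This observation is the pivot for both existence and uniqueness.

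For \textbf{existence}, I apply Theorem~\ref{prop:exist} to obtain a unique weak solution $\sigma\in C([0,T];\calP(\Omega_V))$ of~\eqref{eq:strong-form} with initial datum $\sigma_0$, and set $(\rho_t,\omega_t) := \Pi_V\sigma_t$. To verify that $(\rho,\omega,\lambda\omega)\in\ME(0,T;\T)$, I test the weak Kac formulation~\eqref{eq:weak-formulation} with functions of the form $\varphi(x,v)=a(x)$ and $\varphi(x,v)=v\,b(x)$ for $a,b\in C^1(\T)$. Using $v^2 = V^2$ and $\iota(x,v) = (x,-v)$, the generator $Q$ acts as
\[
 Q(a)(x,v) = v\,\partial_x a(x), \qquad Q(vb)(x,v) = V^2\partial_x b(x) - 2\lambda\, v\, b(x),
\]
and substitution into~\eqref{eq:weak-formulation} reproduces the two identities~\eqref{eq:weak-FC} with $K=\lambda\omega$. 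Continuity at $t=0$ of $\rho$ follows from that of $\sigma$, and the bound $\|\omega_t\|_{\TV}\le V$ gives integrability of $\lambda\omega$ in time.

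For \textbf{uniqueness}, let $(\tilde\rho,\tilde\omega)$ be any weak FC solution with the given initial data, and set $\tilde\sigma := \Pi_V^{-1}(\tilde\rho,\tilde\omega)$, a priori a signed (finite) Borel measure on $\Omega_V$. Any test function $\varphi\in C^{1,0}(\Omega_V)$ decomposes uniquely as $\varphi(x,v)=a(x)+v\,b(x)$ with $a,b\in C^1(\T)$; plugging this decomposition into~\eqref{eq:weak-FC} and reassembling the result via the identity $\langle\varphi,\tilde\sigma_t\rangle=\langle a,\tilde\rho_t\rangle+\langle b,\tilde\omega_t\rangle$ shows that $\tilde\sigma$ satisfies~\eqref{eq:weak-formulation} with initial datum $\sigma_0$, that is, $\tilde\sigma$ is a weak solution of the linear Kac equation in the signed-measure sense. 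Then $\tilde\sigma-\sigma$ solves the Kac equation with zero initial data; its vanishing follows from a standard backward-duality argument (for any $\psi\in C^{1,0}(\Omega_V)$, solve $-\partial_t\varphi + Q\varphi=0$ with $\varphi|_{t=T}=\psi$ and read off $\langle\psi,\tilde\sigma_T-\sigma_T\rangle=0$). Hence $\tilde\sigma=\sigma$ and consequently $(\tilde\rho,\tilde\omega)=(\rho,\omega)$.

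The step I expect to be the most delicate is the \textbf{signed-measure uniqueness} in the last paragraph: the proof in Theorem~\ref{prop:exist} goes via probabilistic (martingale) arguments that genuinely use the probability-measure framework, so one cannot directly invoke it for $\tilde\sigma - \sigma$. The remedy is the deterministic duality argument sketched above, which works for the linear equation in $\calM(\Omega_V)$ and requires only that the backward Cauchy problem for $Q$ be well-posed on $C^{1,0}(\Omega_V)$—this is immediate because $Q$ is a bounded perturbation of the first-order transport operator $v\partial_x$ on the discrete-velocity space $\{-V,V\}$, and classical characteristics yield existence, uniqueness, and the preservation of the $C^{1,0}$ class of the dual solution. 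Everything else—checking the definitions of $\ME$ and $\CE$, verifying the time regularity of $\omega$, and the test-function decomposition—is routine.
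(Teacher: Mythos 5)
Your proof follows the same overall strategy as the paper: transfer well-posedness from the Kac equation to the FC system via the bijection $\Pi_V$, with existence obtained by pushing forward the unique Kac solution and uniqueness obtained by pulling back FC solutions to Kac solutions. Your verification that the projections satisfy $\ME(0,T;\T)$ (testing with $\varphi=a(x)$ and $\varphi = v\,b(x)$) is exactly the computation hidden in the paper's sentence ``Hence, the pair $\Pi_V\sigma$ satisfies~\eqref{eq:weak-FC}.''

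The genuine difference---and the more interesting part of your proposal---is in the uniqueness step. The paper simply asserts that $\sigma^i\coloneqq\Pi_V^{-1}(\rho^i,\omega^i)$ are weak solutions of the Kac equation and then invokes Theorem~\ref{prop:exist}. But Theorem~\ref{prop:exist} gives uniqueness only within $C([0,T];\calP(\Omega_V))$, and a weak FC solution a priori only guarantees $\rho^i_t\in\calP(\T)$; the pulled-back measure $\sigma^i_t$ is in general a signed measure on $\Omega_V$ with total mass~$1$, and one cannot a priori assume the cone condition propagates without already knowing uniqueness. Your diagnosis is precisely right: the probabilistic (martingale) argument behind Theorem~\ref{prop:exist} does not cover signed-measure-valued solutions. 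Your remedy---uniqueness for the linear Kac equation in $\calM(\Omega_V)$ via a backward duality argument, using that $Q$ is a bounded perturbation of transport on a two-point velocity space so that the dual Cauchy problem is classically well-posed in $C^{1,0}(\Omega_V)$---is the clean way to close this gap, and it is not explicitly present in the paper. The only nit is the sign in your dual equation: to have $\frac{d}{dt}\langle\varphi_t,\sigma_t\rangle=0$ you want $\partial_t\varphi + Q\varphi = 0$ with terminal data $\varphi|_{t=T}=\psi$ (equivalently, $-\partial_s\phi + Q\phi = 0$ with $\phi(s)=\varphi(T-s)$); as written, $-\partial_t\varphi + Q\varphi=0$ would give $2\langle Q\varphi_t,\sigma_t\rangle$ rather than zero. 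This is a bookkeeping error, not a conceptual one. Overall your proposal is more careful than the paper's own proof on exactly the point where the paper is terse.
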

\begin{proof}
By the assumption on the initial datum, $\sigma_0\coloneqq \Pi_V^{-1}(\rho_0,\omega_0) \in \calP(\Omega_V)$. By Theorem~\ref{prop:exist}, there exists a unique weak solution $\sigma\in C([0,T];\calP(\Omega_V))$ to the Kac equation~\eqref{eq:strong-form}. Hence, the pair $\Pi_V\sigma\eqqcolon(\rho,\omega)\in C([0,T];\calP(\T))\times C([0,T];\calM(\T))$ satisfies~\eqref{eq:weak-FC} with $K=\lambda\, \omega$, and therefore is a weak solution to the FC system~\eqref{damped_hyperbolic} with initial datum $(\rho_0,\omega_0)$.

Let $(\rho^1,\omega^1)$ and $(\rho^2,\omega^2)$ be two weak solutions to the FC system with initial datum  $(\rho_0,\omega_0)$. It follows that  $\sigma^i\coloneqq\Pi_V^{-1}(\rho^i,\omega^i)$, $i=1,2$, are both weak solutions to the Kac equation~\eqref{eq:strong-form}. From the uniqueness of the weak solution to the Kac equation (Theorem~\ref{prop:exist}), we find 
\begin{equation*}
    \Pi_V^{-1}(\rho^1,\omega^1) = \Pi_V^{-1}(\rho^2,\omega^2)  \quad  \text{in\; $\calP(\Omega_V)$\; for all $t\in [0,T]$}\,,
\end{equation*}
thus implying that $\rho^1=\rho^2$ and $\omega^1=\omega^2$.
\end{proof}

The following result makes the equivalence of the Kac equation and the FC system precise and follows on the lines of the proof above. 
\begin{theorem}\label{thm:Kac-wellposed}
Let $\sigma$ be the weak solution to the Kac equation \eqref{eq:strong-form} with initial datum $\sigma_0\in\calP(\Omega_V)$. Then, $(\rho,\omega)\coloneqq\Pi_V\sigma$ is the weak solution to the FC system~\eqref{damped_hyperbolic} with initial data $(\rho_0,\omega_0)=\Pi_V\sigma_0$. 

Conversely, if the pair $(\rho,\omega)$ is the weak solution to the FC system~\eqref{damped_hyperbolic} with initial datum $(\rho_0,\omega_0)\in\calP(\T)\times\calM(\T)$ such that 
\begin{equation}\label{ass:bound-flux}
    -V\rho_0\leq \omega_0 \leq V\rho_0\,,
\end{equation}
then $\sigma \coloneqq \Pi_V^{-1}(\rho, \omega)$ is the weak solution to the Kac equation~\eqref{eq:strong-form} with initial datum $\sigma_0=\Pi_V^{-1}(\rho_0,\omega_0)$.  
\end{theorem}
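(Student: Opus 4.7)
The proof plan follows the strategy indicated by the statement itself and by the proof of Theorem~\ref{thm:FC-wellposed}: both directions are essentially consequences of the bijection $\Pi_V$ once one observes how test functions pass between the two weak formulations. Both claims in Theorem~\ref{thm:Kac-wellposed} should reduce to showing that the Kac weak form \eqref{eq:weak-formulation} on $\Omega_V$ is equivalent—under the change of variables $(\rho,\omega)=\Pi_V\sigma$—to the two momentum equations \eqref{eq:weak-FC} with $K=\lambda\,\omega$.

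The engine is the following elementary observation. Because $v$ ranges over the two-point set $\{-V,V\}$, every test function $\varphi\in C^{1,0}(\Omega_V)$ admits a unique decomposition $\varphi(x,v)=a(x)+v\,b(x)$ with $a,b\in C^1(\T)$ determined by the even and odd parts of $\varphi$ in $v$. Applying the Kac generator gives
\begin{equation*}
 (Q\varphi)(x,v) = v\,\partial_x a(x) + v^2\,\partial_x b(x) + \lambda\bigl(\varphi(x,-v)-\varphi(x,v)\bigr) = v\,\partial_x a(x) + V^2\,\partial_x b(x) - 2\lambda\,v\,b(x),
\end{equation*}
using $v^2=V^2$ and that $a$ is killed by $\dpartial_v$. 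Integrating against $\sigma$ and recalling $\rho=\sum_v\sigma(\cdot,v)$ and $\omega=\sum_v v\,\sigma(\cdot,v)$, the test $a(x)$ yields the density equation for $(\rho,\omega)$ and the test $v\,b(x)$ yields the flux equation with source $-2\lambda\,\omega$. Hence, if $\sigma$ is a weak solution to \eqref{eq:strong-form} with $\sigma_0=\Pi_V^{-1}(\rho_0,\omega_0)$, then $(\rho,\omega)=\Pi_V\sigma$ lies in $C([0,T];\calP(\T))\times C([0,T];\calM(\T))$ and satisfies \eqref{eq:weak-FC} with $K_r=\lambda\,\omega_r$, so $(\rho,\omega,\lambda\omega)\in\ME(0,T;\T)$ with the prescribed initial datum. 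This establishes the forward implication.

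For the converse, the bounded-speed condition $-V\rho_0\leq\omega_0\leq V\rho_0$ ensures that $\sigma_0 \coloneqq \Pi_V^{-1}(\rho_0,\omega_0)$ is a genuine element of $\calP(\Omega_V)$. By Theorem~\ref{prop:exist}, there exists a unique weak solution $\tilde\sigma\in C([0,T];\calP(\Omega_V))$ to the Kac equation with initial datum $\sigma_0$; by the forward direction already proved, $(\tilde\rho,\tilde\omega)\coloneqq \Pi_V\tilde\sigma$ is a weak solution to the FC system with initial data $(\rho_0,\omega_0)$. The uniqueness statement of Theorem~\ref{thm:FC-wellposed} then forces $(\rho,\omega)=(\tilde\rho,\tilde\omega)$, and applying $\Pi_V^{-1}$ yields $\sigma = \Pi_V^{-1}(\rho,\omega) = \tilde\sigma$, which is the desired weak solution to the Kac equation. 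In particular this sidesteps the need to verify directly that the bounded-speed condition is propagated in time, since it is inherited automatically from $\tilde\sigma\in\calP(\Omega_V)$.

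The only real subtlety—and the step I would treat most carefully—is the test-function decomposition argument: one must justify that the countable family of test functions of the form $a(x)$ and $v\,b(x)$ with $a,b\in C^1(\T)$ is rich enough to characterize the weak formulation on $\Omega_V$. This is immediate because every $\varphi\in C^{1,0}(\Omega_V)$ literally equals $a+v\,b$ with the displayed even/odd formulas, and linearity of \eqref{eq:weak-formulation} in $\varphi$ then promotes the two scalar identities into the full weak formulation. Everything else is bookkeeping around the bijection $\Pi_V$ and an invocation of the uniqueness results already at our disposal.
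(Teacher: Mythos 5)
Your proposal is correct, and it fills in the forward direction exactly the way the paper's terse "follows on the lines of the proof above" implies: decompose $\varphi(x,v)=a(x)+v\,b(x)$ using $v^2=V^2$, apply $Q$, and observe that the $a$- and $b$-components of the Kac weak formulation are precisely the two momentum equations in $\ME(0,T;\T)$ with $K=\lambda\omega$. For the converse, you deviate slightly from what the paper suggests. The paper's Theorem~\ref{thm:FC-wellposed} proof contains, as its uniqueness step, the direct claim that $\Pi_V^{-1}$ of an FC weak solution is a Kac weak solution—i.e., it runs the same test-function algebra in reverse. Your route instead invokes the forward direction plus the uniqueness statement of Theorem~\ref{thm:FC-wellposed}: produce $\tilde\sigma$ from $\sigma_0=\Pi_V^{-1}(\rho_0,\omega_0)$, push it forward, and conclude $(\rho,\omega)=\Pi_V\tilde\sigma$ by uniqueness. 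This is a legitimate and arguably cleaner variant, since it inherits the $\calP(\Omega_V)$-valuedness of $\sigma$ from the Kac solution $\tilde\sigma$ automatically; a direct check that $\Pi_V^{-1}(\rho_t,\omega_t)$ remains nonnegative for all $t$ would otherwise require a propagation argument for the bounded-speed cone that the paper leaves implicit. The only thing to flag is that the paper's own proof of the uniqueness half of Theorem~\ref{thm:FC-wellposed} already tacitly relies on this converse, so there is a mild circularity if you read the two theorems strictly in order; in practice both results are proved by the same algebra and the circularity is cosmetic, but it is worth being aware of when you cite Theorem~\ref{thm:FC-wellposed} as a black box.
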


The following remarks discuss the bounded-flux assumption~\eqref{ass:bound-flux} on the initial flux and the literature related to the FC system. 

\begin{rem}
The condition $-V \rho_0 \le \omega_0 \le V \rho_0$ ensures that $\sigma_0 \coloneqq \Pi_V^{-1}(\rho_0, \omega_0)$ a probability measure. It propagates to all times and implies that, for any given measurable set, (i) the system cannot transport more mass than the mass contained in that set, and (ii) the maximum speed at which the mass is transported does not exceed $V$ since $|d\omega_0/d\rho_0|\le V$. This condition is not a distinctive feature of the FC system, but originates from its connection to the Kac equation, i.e., the solutions to the Kac equation and the FC system can be connected only under this bounded-flux assumption at initial time. General FC systems, however, may have solutions~$\rho$ that are not probability measures but rather signed measures or Sobolev functions. The latter is typical of the standard hyperbolic literature which works with initial data in Sobolev spaces~\cite{BenzoniSerre06}.
\end{rem}

\begin{rem}
 The FC system is related to the partially damped isothermal compressible Euler equations, where an additional convective term is present in \eqref{flux}. Global bounded solutions exist for initial data satisfying condition \eqref{eq:cone-condition} (cf.\ \cite[Section~3]{Zhao2010}). 
 In \cite[Section~5.4]{PKG18} similar models, but for hyperbolic heat transport, are constructed in the form of GENERIC and differ from the FC system. The FC system, instead, possesses only a weaker version known as pre-GENERIC \cite{KZMP20}. Since showing this fact requires the introduction of additional notation, we postpone the pre-GENERIC structure of the FC system to \appendixname~\ref{sec:preGEN}, which may be of independent interest.
\end{rem}

\section{FIR inequality}\label{sec:FIR}
In the last section we introduced a variational structure for the Kac equation by which we defined a variational solution as its zero level set. As we shall see in the rest of this article, this variational structure also allows us to study \emph{approximate} solutions, which correspond to the non-zero level sets of the rate functional. 
The regularity properties of such sub-level sets are made explicit by an \emph{a priori} estimate that, for the Kac equation, connects the relative entropy and the Fisher information (defined below) to the rate functional.
This estimate will play a crucial role in studying asymptotic 
limits in Section~\ref{sec:Limits}. Specifically, this inequality provides control on the Fisher information (which encodes regularity properties of the flux) in terms of the values of the rate functional. 

To present this estimate, we first define the \emph{Fisher information } $\FI(\cdot|\pi) \colon \calP(\Omega_V)\rightarrow[0,+\infty]$ as
 \begin{equation}\label{def:FI}
 \FI(\eta\mid\pi)\coloneqq \begin{dcases*}\displaystyle
 \frac12 \int_{\Omega_V} \biggl( \sqrt{\frac{d\eta}{d\pi}\circ \iota} -\sqrt{ \frac{d\eta}{d\pi}}\biggr)^{\!2} \, d\stat &if $\eta\ll \pi$, \\
 +\infty &\text{otherwise,}
 \end{dcases*}
 \end{equation}
 where $\stat$ is the invariant measure for the Kac equation~\eqref{eq:strong-form}. The Fisher information has several useful properties, such as non-negativity, convexity, and lower semicontinuity, which are summarized in Proposition~\ref{lem:FI-prop} below. It is closely related to entropy dissipation  and is a natural object that appears in the variational approaches of~\cite{DLPS17,HPST20,PRST22}. We now state the FIR inequality.
 
\begin{theorem}\label{thm:FIR}
Consider a pair $(\sigma,j)\in \CE(0,T;\Omega_V)$ satisfying
\begin{equation}\label{well-preparedness}
\Ent(\sigma_0\mid\stat) + \scrI(\sigma,j) < \infty \, ,
\end{equation}
with $\sigma|_{t=0}=\sigma_0$. Then, for any $t\in [0,T]$, we have
\begin{equation}\label{eq:FIR-pi}
\Ent(\sigma_t\mid\stat) + \lambda \int_0^t \FI(\sigma_r\mid\stat)\,dr \leq \Ent(\sigma_0\mid\stat) + \scrI(\sigma,j) \, .
\end{equation}
\end{theorem}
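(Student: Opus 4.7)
The plan is to derive the FIR inequality by differentiating $t\mapsto\Ent(\sigma_t|\stat)$ along the continuity equation and combining this with the Legendre dual representation of the Lagrangian $\scrL$.

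Since $\scrI(\sigma,j)<\infty$, the constraints in \eqref{eq:LDRF} force $j^1_t = v\sigma_t$ and $j^2_t\ll\lambda\sigma_t$ for a.e.\ $t\in[0,T]$; write $g_t\coloneqq dj^2_t/d(\lambda\sigma_t)$, so that $\scrL(\sigma_t,j_t)=\lambda\int(g_t\log g_t - g_t + 1)\,d\sigma_t$. The condition $\Ent(\sigma_0|\stat)<\infty$ gives $\sigma_0\ll\stat$, and by standard superlinearity (entropy bounds integrated against the rate functional) one obtains $\sigma_t\ll\stat$ for a.e.\ $t$, so that one may write $\sigma_t = f_t\stat$. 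The formal chain rule for the entropy then reads, after substituting $\partial_t\sigma_t = -\ddiv j_t$ and using $j^1_t = v\sigma_t$ together with the periodic structure on $\T$ to eliminate the $j^1$ contribution,
\begin{equation*}
  \frac{d}{dt}\Ent(\sigma_t|\stat) = \int_{\Omega_V}\dnabla \log f_t \cdot dj_t = \int_{\Omega_V}\log\!\left(\frac{f_t\circ\iota}{f_t}\right) dj^2_t.
\end{equation*}

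The key step is then to apply the Legendre duality
\begin{equation*}
  \scrL(\sigma,j) = \Ent(j^2|\lambda\sigma) \geq \int \varphi\,dj^2 - \lambda\int(e^\varphi-1)\,d\sigma
\end{equation*}
with a test function $\varphi$ chosen to expose both the entropy derivative above and a Fisher-information term. Using the identity $\int\sqrt{f_t\cdot f_t\circ\iota}\,d\stat = 1 - \FI(\sigma_t|\stat)$ (which follows by expanding the square in the definition~\eqref{def:FI} and using $\int f_t\,d\stat = \int f_t\circ\iota\,d\stat = 1$) together with the cancellation $\int(f_t\circ\iota - f_t)\,d\stat = 0$, one arrives at the pointwise dissipation estimate
\begin{equation*}
  \scrL(\sigma_t,j_t) \geq \frac{d}{dt}\Ent(\sigma_t|\stat) + \lambda\,\FI(\sigma_t|\stat)\qquad\text{for a.e.\ $t\in[0,T]$}.
\end{equation*}
Integrating over $[0,t]$ and using $\scrI(\sigma,j)=\int_0^T\scrL(\sigma_r,j_r)\,dr$ delivers the FIR estimate~\eqref{eq:FIR-pi}.

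The main obstacle is to make the chain rule rigorous, since $\log f_t$ is generally not a bounded test function admissible in \eqref{eq:CE-IntegralForm}. I would handle this by a two-level regularization: first truncate $f_t^\delta\coloneqq f_t\vee\delta$ (bounded away from zero) and then mollify in $x$, to obtain smooth, bounded admissible test functions. One tests the continuity equation against these regularizations, using the time-dependent version from Lemma~\ref{lem:time-space-weak} to accommodate the $t$-dependence of $\varphi$, and applies the Legendre bound at the regularized level. Passage to the limit $\delta\downarrow 0$ uses lower semicontinuity of $\Ent(\cdot|\stat)$ and $\FI(\cdot|\stat)$, Fatou's lemma for the time integral against the nonnegative integrand, and convergence of the cross terms $\int(f_t^\delta\circ\iota - f_t^\delta)\,d\stat\to 0$. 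The absolute continuity of $t\mapsto \Ent(\sigma_t|\stat)$ is not presupposed but is recovered a posteriori from the dissipation estimate; this is the delicate point, since the bound on $\FI$ obtained along the way is precisely what provides the compactness needed to justify the formal chain rule.
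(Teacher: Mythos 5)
Your formal calculation follows the same skeleton as the paper's heuristic (chain rule for the entropy along the continuity equation, Legendre duality with $\varphi\propto\dpartial_v\log(d\sigma_t/d\pi)$, extraction of the Fisher information). But the rigorization step you propose has a structural gap that the paper's proof is designed precisely to avoid.

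You propose to regularize the \emph{test functions} — truncate $f_t^\delta=f_t\vee\delta$, mollify in $x$ — and then test the \emph{unregularized} continuity equation against the result, invoking Lemma~\ref{lem:time-space-weak} ``to accommodate the $t$-dependence of $\varphi$.'' But that lemma only handles test functions of the separated form $\chi(t)\varphi(x,v)$ with $\chi\in C^1_c((0,T))$ and $\varphi\in C^{1,0}(\Omega_V)$ \emph{fixed in time}; it says nothing about genuine time-dependent test functions $\varphi_t(x,v)$ built from $f_t$, whose time regularity is exactly what is unknown a priori. You acknowledge that the absolute continuity of $t\mapsto\Ent(\sigma_t\mid\pi)$ must be ``recovered a posteriori,'' but this leaves the chain-rule step circular as written. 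The paper resolves this by regularizing the \emph{measures}, not the test functions: $(\sigma,j)\mapsto(\sigma^\eps,j^\eps)$ by convolution with the heat kernel on $\T$, which gives (Lemma~\ref{lem:reg}(3)) that $t\mapsto\sigma_t^\eps$ is absolutely continuous in total variation, with the chain rule (Lemma~\ref{lem:reg}(4)) then established by a direct difference-quotient argument rather than via the continuity equation. This mollification is also what guarantees the monotonicity $\scrL(\sigma_t^\eps,j_t^\eps)\le\scrL(\sigma_t,j_t)$ (Lemma~\ref{lem:reg}(2)) so the estimate does not degrade as $\eps\to 0$. Your truncation $f_t\vee\delta$ also detaches the regularized object from the continuity equation, since $(f_t\vee\delta)\pi$ does not solve any tractable equation; the paper's $\delta$-regularization $f_\delta(r)=r\log(r+\delta)$ acts on the entropy function, not on the density, precisely to avoid this.

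Two smaller issues. First, you assert $\sigma_t\ll\pi$ for a.e.\ $t$ ``by standard superlinearity''; in fact $\sigma_t\ll\pi$ for all $t$ is an \emph{output} of the FIR inequality, not an easy preliminary, while the mollified $\sigma_t^\eps$ is automatically absolutely continuous. Second, the pointwise estimate you state, $\scrL(\sigma_t,j_t)\ge\frac{d}{dt}\Ent(\sigma_t\mid\pi)+\lambda\FI(\sigma_t\mid\pi)$, does not follow from the Legendre duality as written: taking $\varphi=\dpartial_v\log f_t$ gives the full $\frac{d}{dt}\Ent$ but the exponential term integrates to zero (no Fisher information appears); it is the choice $\varphi=\tfrac12\dpartial_v\log f_t$ that produces $\FI$, at the price of picking up only $\tfrac12\frac{d}{dt}\Ent$, exactly as in the paper's heuristic display \eqref{eq:pre-genFIR}.
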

An obvious consequence of Theorem~\ref{thm:FIR} is that the relative entropy with respect to the stationary measure is a Lyapunov function for the Kac equation, as we may verify by choosing $\scrI(\sigma,j)=0$ and by the positivity of the Fisher information.
We comment on the assumptions of Theorem~\ref{thm:FIR} in the following remark.

\begin{rem}\label{rem:well-preparedness}
The initial datum being \emph{well-prepared} via $\Ent(\sigma_0\mid\pi)<\infty$ implies that, in the $x$-variable, the initial data $\sigma_0(\cdot,v) \ll \mathcal L_{\T}$ for all $v$. Since the Kac equation is well-posed for a considerably larger class of initial data (cf.~Theorem~\ref{thm:Kac-wellposed} and Figure~\ref{fig:delta} with a Dirac initial datum), we expect that this assumption can be relaxed to allow for such singular initial data---we give formal arguments for this observation in Remark~\ref{rem:genFIR-formal}. Making these formal arguments rigorous would require significant technical machinery which we wish to avoid in this article both to simplify the presentation and since it would not considerably improve the underlying understanding of the system. 

The assumption that the rate functional $\scrI(\sigma,j)$ is bounded arises naturally in the context of the large-deviation principle (cf.~\appendixname~\ref{sec:LD}), wherein it implies that the pair $(\sigma,j)$ solves the Kac equation approximately. In other words, such a pair is a fluctuation around a variational solution, which is the zero level set of the rate functional. Intuitively, Theorem~\ref{thm:FIR} states that the  connection between entropy and Fisher information not only applies to solutions, where the Fisher information quantifies the rate of decay of entropy, but also to fluctuations (in the large-deviation sense) around solutions. In Section~\ref{sec:Limits}, the FIR inequality will play a central role in studying asymptotic limits, and a consequence of this bounded-rate-functional assumption is that we study asymptotic convergence of both solutions and fluctuations.  
\end{rem}
 
We now illustrate the intuitive ideas behind the proof of Theorem~\ref{thm:FIR}. The heuristic motivation makes use of an appropriate choice for the test functions $\varphi$ in the dual formulation for the Lagrangian~\eqref{eq:fin-Lag}. Assuming that $\sigma$ has a smooth density $\sigma_t$ in time, we formally calculate
\begin{align*}
    \frac12\frac{d}{dt} \int_{\Omega_V} \sigma_t\log\frac{\sigma_t}{\pi} &= \frac12\int_{\Omega_V} \partial_t\sigma_t \, \log \frac{\sigma_t}{\pi} + \frac12\int_{\Omega_V} \partial_t \sigma_t \\
    & = \frac12\int_{\Omega_V} \partial_x\Bigl(\log\frac{\sigma_t}{\pi}\Bigr) \, j^1_t + \frac12\int_{\Omega_V} \dpartial_v\Bigl(\log\frac{\sigma_t}{\pi}\Bigr) \, j^2_t + 0 \\
    & = \frac12\int_{\Omega_V} \partial_x\Bigl(\frac{\sigma_t}{\pi}\Bigr) \, v \, \pi + \frac12\int_{\Omega_V} \dpartial_v\Bigl(\log\frac{\sigma_t}{\pi}\Bigr) \, j^2_t \\
    & = 0 + \frac12\int_{\Omega_V} \dpartial_v\Bigl(\log\frac{\sigma_t}{\pi}\Bigr) \, j^2_t.
\end{align*}
The second equality follows since the pair $(\sigma,j)$ satisfies the continuity equation, and the zero follows since $\sigma_t\in \calP(\Omega_V)$ for every $t$. The third equality follows since $\scrI(\sigma,j)<\infty$ implies that $j^1_t=v\,\sigma_t$, and the zero in the final equality follows by using integration by parts in the first integral. 
Using the variational form~\eqref{eq:fin-Lag} of the Lagrangian with the choice $\varphi=\frac12\dpartial_v\log(\sigma_t/\pi)$, the above calculation leads to 
\begin{equation}\label{eq:pre-genFIR}
    \frac12\frac{d}{dt} \int_{\Omega_V} \sigma_t\log\frac{\sigma_t}{\pi} \leq \scrL(\sigma_t,j_t) - \lambda \,\FI(\sigma_r\mid\pi) \, .
\end{equation}
Integrating in time over $[0,T]$, we arrive at the FIR inequality~\eqref{eq:FIR-pi}.

To make these calculations rigorous, we need to ensure that: (i) a chain rule holds for the map $t\mapsto \int_{\Omega_V}\sigma_t\log(\sigma_t/\pi)$, and (ii) this function is admissible in the dual formulation of the Lagrangian $\scrL$. Using Proposition~\ref{lem:FI-prop}, which collects some required properties of the Fisher information, in Lemma~\ref{lem:reg} we prove a general chain rule for appropriately regularised functions of measures. The proof of Theorem~\ref{thm:FIR} applies this lemma to a regularised version of $\sigma_t\log(\sigma_t/\pi)$ and then passes to the limit in the regularisation parameter to arrive at the FIR inequality.

\begin{prop}\label{lem:FI-prop} The Fisher information satisfies 
\begin{enumerate}[label=(\roman*)]
    \item $\FI(\cdot\mid\pi)\geq0$ on $\calP(\Omega_V)$ and $\FI(\eta\mid\pi)=0$ if and only if $\eta=\pi$; 
    \item $\FI(\cdot\mid\pi)$ is convex and weakly lower semicontinuous on $\calP(\Omega_V)$. 
\end{enumerate}
\end{prop}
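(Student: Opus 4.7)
The plan is to treat the two parts separately, with (i) being elementary and (ii) handled through a dual (variational) representation that yields convexity and lower semicontinuity simultaneously.

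For (i), non-negativity is immediate because the integrand $\bigl(\sqrt{u \circ \iota} - \sqrt{u}\bigr)^2$, with $u \coloneqq d\eta/d\pi$, is pointwise non-negative. If $\FI(\eta \mid \pi) = 0$, then the integrand vanishes $\pi$-almost everywhere, forcing $u$ to be $\iota$-invariant; in particular, $\eta = \pi$ gives $u \equiv 1$ and hence $\FI(\pi \mid \pi) = 0$ by direct substitution.

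For (ii), I would rewrite $\FI$ as a supremum of narrowly continuous affine functionals. The elementary identity $2\sqrt{ab} = \inf_{s > 0} \bigl(s\, a + s^{-1} b\bigr)$ for $a, b \geq 0$, via the substitution $s = e^\phi$, yields the pointwise formula
\begin{equation*}
    \bigl(\sqrt{a} - \sqrt{b}\bigr)^2 = \sup_{\phi \in \R} \bigl[(1 - e^\phi)\, a + (1 - e^{-\phi})\, b\bigr].
\end{equation*}
Taking $a = u$, $b = u \circ \iota$, integrating against $\pi$, and exploiting the $\iota$-invariance of $\pi$ to relocate the composition leads to
\begin{equation*}
    \FI(\eta \mid \pi) = \sup_{\phi \in C_b(\Omega_V)} \int_{\Omega_V} g_\phi \, d\eta, \qquad g_\phi \coloneqq \tfrac12 \bigl[2 - e^{\phi \circ \iota} - e^{-\phi}\bigr] \in C_b(\Omega_V).
\end{equation*}
Since each $\eta \mapsto \int g_\phi \, d\eta$ is affine and narrowly continuous, $\FI(\cdot \mid \pi)$ inherits convexity and narrow lower semicontinuity as the pointwise supremum of such functionals.

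The main obstacle will be the reduction from the pointwise supremum to a supremum over $\phi \in C_b(\Omega_V)$: this requires a measurable-selection/density argument combined with a truncation ensuring the supremand is integrable. This step fits within the classical theory of integral functionals of measures with a convex, lower semicontinuous, positively $1$-homogeneous integrand (of Bouchitt\'e--Valadier type), which also supplies the correct extension $\FI(\eta \mid \pi) = +\infty$ for $\eta \not\ll \pi$ via the recession function; an alternative route is to bypass the dual formula entirely and deduce convexity directly from the joint convexity of $(a,b) \mapsto (\sqrt{a}-\sqrt{b})^2$ on $\R_+^2$, combined with a Fatou-type argument for lower semicontinuity.
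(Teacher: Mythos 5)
Your dual-representation approach to part (ii) is the right route and essentially the standard one: the pointwise identity $(\sqrt a-\sqrt b)^2=\sup_{\phi\in\R}\bigl[(1-e^\phi)a+(1-e^{-\phi})b\bigr]$, combined with the $\iota$-invariance of $\pi$, does give $\FI(\eta\mid\pi)=\sup_{\phi\in C(\Omega_V)}\int g_\phi\,d\eta$ for every $\eta\ll\pi$ (the truncation/approximation step you flag is genuine but routine on the compact space $\Omega_V$), and each $\eta\mapsto\int g_\phi\,d\eta$ is linear and narrowly continuous, so convexity and lower semicontinuity follow on the set $\{\eta\ll\pi\}$. The paper itself offers no proof and simply refers to standard arguments, so there is nothing to compare against beyond that.

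There are, however, two genuine problems. In part (i) you establish that $\FI(\eta\mid\pi)=0$ forces $u=d\eta/d\pi$ to be $\iota$-invariant and that $\eta=\pi$ gives $\FI=0$, but you never pass from $\iota$-invariance of $u$ to $\eta=\pi$, and this implication does not hold: for any probability density $f$ on $\T$, the measure $\eta=f\,\calL_\T\otimes\operatorname{Unif}_{\{-V,V\}}$ has $u(x,v)=f(x)$ independent of $v$, hence $\FI(\eta\mid\pi)=0$, although $\eta\neq\pi$ unless $f\equiv1$. The zero set of $\FI$ is $\{\eta\ll\pi:\iota_\sharp\eta=\eta\}$, not $\{\pi\}$; the ``if and only if $\eta=\pi$'' clause is a slip that should be corrected rather than proved. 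In part (ii), the remark that the recession function ``supplies the correct extension $\FI(\eta\mid\pi)=+\infty$ for $\eta\not\ll\pi$'' is not correct: the integrand $(a,b)\mapsto\tfrac12(\sqrt a-\sqrt b)^2$ has linear growth, so its recession function is finite, and indeed $g_\phi<1$ pointwise so that $\sup_\phi\int g_\phi\,d\eta\le1$ for every $\eta\in\calP(\Omega_V)$; your dual functional is the lsc convex envelope of $\FI$, not $\FI$ itself. In fact $\FI$ with the $+\infty$ extension is \emph{not} narrowly lower semicontinuous: smoothing $\delta_{(0,V)}$ in $x$ gives $\eta_n\ll\pi$ with $\FI(\eta_n\mid\pi)=1$ for all $n$, yet $\eta_n\rightharpoonup\delta_{(0,V)}$ and $\FI(\delta_{(0,V)}\mid\pi)=+\infty$. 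This is harmless for the paper's use (the relevant limits are absolutely continuous with respect to $\pi$), but your proof should state the lsc/convexity claim on $\{\eta\ll\pi\}$ and drop the recession-function justification.
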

We skip the proof since it follows by standard arguments that may be found, for instance, in~\cite{PRST22}. 
 
\begin{lem}\label{lem:reg}
Let $(\sigma,j)\in \CE(0,T;\Omega_V)$ with
\[
   \scrI(\sigma, j)= \int_0^T \scrL(\sigma_t,j_t)\,dt <\infty \, ,
\]
and, for any $t\in[0,T]$, define
\begin{equation*}
    \sigma_t^\eps(dxdv) \coloneqq \int_\T \Mol_\eps(x-y)\, \sigma_t(dydv)\,dx \, \quad \text{and} \quad j_t^\eps(dxdv) \coloneqq \int_\T \Mol_\eps(x-y) \,j_t(dydv)\,dx \, ,
\end{equation*}
where $\Mol_\eps$ is the heat kernel on $\T$, given by
\begin{equation}\label{eq:heatker-T}
    \Mol_\eps(x) \coloneqq \frac{1}{\sqrt{2\pi \eps}}\sum_{k\in\Z} e^{-\frac{\lvert x- k \rvert^2}{2\eps}}\qquad\text{for \; $\eps>0$\; and\; $x\in\T$\,.}
\end{equation}
Then, for every $\eps>0$, the pair $(\sigma^\eps,j^\eps)$ satisfies the following:
\begin{enumerate}
    \item $(\sigma^\eps,j^\eps)\in \CE(0,T;\Omega_V)$ with 
    \begin{align*}
        &\sigma_t^\eps \rightharpoonup \sigma_t \quad \text{narrowly in } \calP(\Omega_V) \text{ for all } t\in[0,T] \, , \\
        &  j_t^\eps \rightharpoonup j_t\quad \text{narrowly in } \calM(\Omega_V) \text{ for almost every } t\in[0,T] \, .
    \end{align*}
    
    \item $\scrL(\sigma_t^\eps,j_t^\eps) \le \scrL(\sigma_t,j_t)$\; for almost every $t\in[0,T]$.
    \item For any $\eps>0$, the curve $t\mapsto \sigma_t^\eps$ is absolutely continuous with respect to the total variation norm.
    \item Let $\phi\in C^2([0,\infty))$ 
    and $\calF \colon \calP(\Omega_V)\to[0,+\infty]$ be defined by
    \[
        \calF(\sigma) \coloneqq \begin{dcases*}
        \displaystyle\int_{\Omega_V} \phi\left(\frac{d\sigma}{d\pi}\right) d\pi & if $\sigma\ll \pi$, \\
        +\infty & otherwise.
        \end{dcases*}
    \]
    If $\sup_{t\in[0,T]} \calF(\sigma_t^\eps) <\infty$, then $(0,T)\ni t\mapsto \calF(\sigma_t^\eps)$ is absolutely continuous and the following chain rule holds:
    \[
        \frac{d}{dt}\calF(\sigma_t^\eps) = \int_{\Omega_V} \dpartial_v\phi'\Bigl(\frac{d\sigma_t^\eps}{d\pi}\Bigr) \, dj_t^{2,\eps}\qquad\text{for almost every } t\in(0,T) \, .
    \]
\end{enumerate}
\end{lem}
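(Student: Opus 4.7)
For \textbf{(1)}, the key observation is the duality $\langle \varphi, \sigma^\eps_t\rangle = \langle \varphi * \Mol_\eps, \sigma_t\rangle$ (and likewise for $j^\eps$), where convolution is taken in the $x$-variable. Since $\partial_x$ and $\dpartial_v$ both commute with convolution in $x$ and $\varphi * \Mol_\eps \in C^{1,0}(\Omega_V)$, testing the continuity equation for $(\sigma,j)$ against $\varphi * \Mol_\eps$ immediately yields the continuity equation for $(\sigma^\eps,j^\eps)$. Narrow convergence $\sigma^\eps_t \to \sigma_t$ for every $t$ and $j^\eps_t \to j_t$ for a.e.\ $t$ then follows from the uniform convergence $\varphi * \Mol_\eps \to \varphi$ for $\varphi\in C(\Omega_V)$ on the compact torus.

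For \textbf{(2)}, I write $(\sigma^\eps_t,j^\eps_t) = \int_\T \Mol_\eps(y)\,(T_y\sigma_t, T_y j_t)\,dy$, where $T_y$ denotes translation by $y$ in the $x$-variable. Since $T_y$ commutes with multiplication by $v$, the constraint $j^1 = v\,\sigma$ is preserved; since $\pi$ is translation-invariant in $x$, we have $\scrL(T_y\sigma_t, T_y j_t) = \scrL(\sigma_t,j_t)$. Joint convexity of relative entropy combined with Jensen's inequality then gives $\scrL(\sigma^\eps_t,j^\eps_t) \leq \int_\T \Mol_\eps(y)\,\scrL(T_y\sigma_t, T_y j_t)\,dy = \scrL(\sigma_t,j_t)$.

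For \textbf{(3)}, the dual characterization of the TV norm together with the continuity equation yields $\|\sigma^\eps_t - \sigma^\eps_s\|_{\TV} = \sup_{|\varphi|\leq 1}\int_s^t \langle \dnabla\varphi, j^\eps_r\rangle\,dr$. Integration by parts transfers the $\partial_x$ from $\varphi$ onto $\Mol_\eps$, producing the constant $C_\eps := \|\partial_x \Mol_\eps\|_{L^1(\T)}$; the velocity-difference term is bounded trivially by $2\|j^2_r\|_{\TV} \leq 2\|j_r\|_{\TV}$. Hence $\|\sigma^\eps_t - \sigma^\eps_s\|_{\TV} \leq (C_\eps+2)\int_s^t \|j_r\|_{\TV}\,dr$, which gives absolute continuity because $r\mapsto \|j_r\|_{\TV}$ is $L^1$ by definition of $\CE(0,T;\Omega_V)$.

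For \textbf{(4)}, let $f^\eps_t := d\sigma^\eps_t/d\pi$ and $g^\eps_t := dj^{2,\eps}_t/d\pi$; both are well-defined and smooth in $x$ after mollification. The continuity equation in density form reads $\partial_t f^\eps_t = -v\,\partial_x f^\eps_t + g^\eps_t\circ\iota - g^\eps_t$. Granted the chain rule $\frac{d}{dt}\int \phi(f^\eps_t)\,d\pi = \int \phi'(f^\eps_t)\,\partial_t f^\eps_t\,d\pi$, the transport term contributes $-\int v\,\partial_x\phi(f^\eps_t)\,d\pi = 0$ by periodicity in $x$; the reaction part, using that $\iota$ preserves $\pi$ so that $\int h\,(g\circ\iota)\,d\pi = \int (h\circ\iota)\,g\,d\pi$, rewrites as $\int \dpartial_v\phi'(f^\eps_t)\,dj^{2,\eps}_t$, giving the stated formula. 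The \emph{main obstacle} is rigorously justifying this chain rule with only $\phi\in C^2$: the AC in TV from \textbf{(3)} produces a time derivative of $f^\eps_t$ that lies in $L^1$ in time and space, while the hypothesis $\sup_t\calF(\sigma^\eps_t)<\infty$ gives a uniform bound on $\phi(f^\eps_t)$; one then approximates $\phi$ by $C^2$ truncations with bounded $\phi'$ (for which the classical AC chain rule in $L^1$ applies) and passes to the limit using these uniform bounds and dominated convergence.
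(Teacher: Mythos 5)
Your proof of parts (1)--(3) is essentially correct and closely parallels the paper's argument, modulo cosmetic differences: in (1) you deduce narrow convergence from uniform convergence of $\varphi\ast\Mol_\eps$ (the paper invokes dominated convergence, plus a TV bound on $j^2_t$ derived from the finite rate functional, but the effect is the same); in (3) you recognise that $C_\eps = \|\partial_x\Mol_\eps\|_{L^1(\T)}$ multiplied by $\|j^1_r\|_{\TV}$ controls the transport term, which matches the paper's estimate. For part (2) your route is genuinely different: you represent $\sigma^\eps_t = \int_\T \Mol_\eps(y)\,T_y\sigma_t\,dy$ and use joint convexity of relative entropy together with Jensen over the translation parameter, whereas the paper works directly with the dual formulation and applies Jensen to the map $\varphi\mapsto e^\varphi$ inside the supremum. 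Both are valid; yours is conceptually slightly cleaner, while the paper's stays entirely inside the dual characterisation of $\scrL$.

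The real divergence is in part (4), and there your proposal is more complicated than necessary and, as written, under-justified. You treat the unbounded range of $\phi'$ as the obstacle and propose a truncation of $\phi$ plus a limiting argument relying on the $L^1$ bound on $\phi(\varsigma^\eps_t)$. But that bound does not dominate $\phi'(\varsigma^\eps_t)$ and you do not explain how to pass to the limit in the truncations, so a gap remains. The key observation the paper uses, and which you appear to have missed, is that mollification by the heat kernel at fixed $\eps>0$ makes $\varsigma^\eps_t = d\sigma^\eps_t/d\pi$ \emph{uniformly bounded in $L^\infty$} over $t\in[0,T]$ (since $\varsigma^\eps_t(x,v)=2\int_\T\Mol_\eps(x-y)\,\sigma_t(dy,v)\le 2\|\Mol_\eps\|_{L^\infty}$). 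Hence the values of $\varsigma^\eps_t$ live in a compact interval, $\phi'$ is uniformly bounded there, and no truncation is needed: the paper simply writes the finite difference $\calF(\sigma^\eps_{t+h})-\calF(\sigma^\eps_t)$ via the integral form of the mean value theorem, substitutes the continuity equation for $\varsigma^\eps_{t+h}-\varsigma^\eps_t$, and applies Lebesgue's differentiation theorem as $h\to 0$. Your claim that the transport term vanishes by periodicity and that the reaction term rewrites as $\int\dpartial_v\phi'(\varsigma^\eps_t)\,dj^{2,\eps}_t$ is correct; only the justification of the chain rule needs to be replaced by this direct and simpler argument.
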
 
\begin{proof}
    \emph{Ad }(1):  From the properties of the heat kernel $\Mol_\eps$, it is not difficult to see that the pair $(\sigma^\eps,j^\eps)$ satisfies the continuity equation. Moreover, for any test function $\varphi\in C(\Omega_V)$ and $t\in [0,T]$, we have 
    \begin{equation*}
        \langle \varphi, \sigma^\eps_t \rangle = \langle \varphi^\eps, \sigma_t\rangle \xrightarrow{\eps\rightarrow 0} \langle \varphi, \sigma_t\rangle\,,
    \end{equation*}
    with
    \begin{equation*}
        \varphi^\eps(x, v) \coloneqq \int_{\T} \Mol_\eps(x - y) \, \varphi(y, v) \, dy \, ,
    \end{equation*}
    where the first equality follows since $M^\eps(x-y)=M^\eps(y-x)$, and the dominated convergence theorem applies. A similar argument holds for $j^1$ since $j_t^1 = v \, \sigma_t$ for almost every $t\in[0,T]$.
    
    As for the convergence of the flux $j^2$, using~\eqref{eq:fin-Lag}, we first observe that a finite rate functional gives, for almost every $t \in [0, T]$,
    \begin{align*}
        \langle \varphi,j_t^2\rangle \le \Ent(j_t^2\mid\lambda \, \sigma_t) + \int_{\Omega_V}\lambda \,\bigl(e^{\varphi}-1\bigr) \, d\sigma_t \le \Ent(j_t^2\mid\lambda \, \sigma_t) + \lambda \, (e-1)\,,
    \end{align*}
    for any function $\varphi\in C(\Omega_V)$ with $|\varphi|\le 1$. Taking the supremum over such functions $\varphi$ yields
    \[
        \|j_t^2\|_{\TV} \le \Ent(j_t^2\mid\lambda \, \sigma_t) + \lambda \, (e-1)\qquad\text{for almost every } t\in[0,T] \,.
    \]
    As a consequence, the argument for $\sigma_t^\eps$ holds for $j_t^{2,\eps}$ since the dominated convergence applies.
    
    \UScomment{\\Indeed $\sigma^\eps_t\in \mathcal P(\Omega_V)$, $j^\eps\in \mathcal M((0,T)\times\Omega_V)$ by Fubini-Tonelli and $\int_{\mathbb T_y}M_\eps(x-y)dy=1$ for any $x\in\mathbb T$ and $M_\eps(x-y)=M_\eps(y-x)$. The continuity equation follows since for any $\varphi\in C^{1,0}(\Omega_V)$ we have
    \begin{equation*}
          \langle \varphi, \sigma^\eps_t\rangle - \langle \varphi, \sigma^\eps_s \rangle = \int_s^t \langle \dnabla \varphi, j^\eps_r\rangle\, dr \Longleftrightarrow 
          \langle \varphi^\eps, \sigma_t\rangle - \langle \varphi^\eps, \sigma_s \rangle = \int_s^t \langle \dnabla \varphi^\eps, j_r\rangle\, dr.
    \end{equation*}
    }
    
 \noindent\emph{Ad }(2):
    By construction, $j_r^{1,\eps} = v \, \sigma_t^\eps$. Using Jensen's inequality, for any $\varphi\in C^{1,0}(\Omega_V)$ we find
\begin{align*}
    \langle \varphi,j_t^{2,\eps}\rangle - \int_{\Omega_V}\lambda\,\bigl(e^{\varphi}-1\bigr) \, d\sigma_t^\eps &= \langle \varphi^\eps,j_t^2\rangle - \int_{\Omega_V} \int_\T\lambda\,\bigl(e^{\varphi(x,v)}-1\bigr)\, M_\eps (x-y)\, \sigma(dydv)\, dx \\
    &\le \langle \varphi^\eps,j_t^2\rangle - \int_{\Omega_V} \lambda\,\bigl(e^{\varphi^\eps(y,v)}-1\bigr)\, \sigma(dydv) \le \Ent(j_t^2\mid\lambda\,\sigma_t) \, .
\end{align*}
Therefore, taking the supremum over $\varphi$ yields $\Ent(j_r^{2,\eps}|\lambda\,\sigma_r^\eps) \le \Ent(j_t^2\mid\lambda\,\sigma_r)$ and, as a consequence,
\[
    \scrL(\sigma_t^\eps,j_t^\eps) \le \scrL(\sigma_t,j_t)\qquad\text{for almost every } t\in[0,T] \, .
\]
For the next two points, we first notice that $\sigma_t^\eps \ll \pi$ for $\eps>0$ and every $t\in[0,T]$. Since $j_t^{1,\eps}=v\,\sigma_t^\eps$ and $j_t^{2,\eps}\ll \sigma_t^\eps$ for almost every $t\in[0,T]$, we also have that $j_t^{\eps}\ll \pi$. We introduce
\[
    \varsigma_t^\eps \coloneqq d\sigma_t^\eps/d\pi,\quad w_t^{2,\eps}=dj_t^{2,\eps}/d\pi,\quad t\in[0,T]
\]
and observe that $(t,x,v)\mapsto \varsigma_t^\eps(x,v)$ is continuous on $[0,T]\times \Omega_V$ for any $\eps>0$, and thus bounded.

\noindent\emph{Ad }(3):
Observe that the regularity of $\Mol_\eps$ for $\eps>0$ allows one to obtain the estimate \[
    \int_{\Omega_V}|\partial_x\varsigma_t^\varepsilon| \,d\pi \le C_\eps 
\]
for some constant $C_\eps>0$ with $C_\eps\to\infty$ as $\eps\to 0$. Consequently, for any $\varphi\in C(\Omega_V)$ with $|\varphi|\le 1$,
\begin{align*}
   \left| \langle \varphi, \sigma_t^\eps\rangle - \langle \varphi, \sigma_s^\eps \rangle \right| &= \bigg\lvert\int_s^t \langle \dpartial_v \varphi,  j_r^{2,\eps}\rangle\, dr - \int_s^t \int_{\Omega_V} \varphi\,v\, \partial_x \varsigma_r^\eps\,d\pi\, dr\bigg\rvert \\
   &\le 2 \int_s^t \Ent(j_r^{2,\eps}\mid\lambda\,\sigma_r^\eps)\,dr + 2 \int_s^t \lambda\,\bigl(e^{2}-1\bigr)\,dr + \int_s^t C_\eps V\,dr \eqqcolon \int_s^t g_\eps(r)\,dr \, ,
\end{align*}
where $v\in\{-V,V\}$. Taking the supremum over such $\varphi$ gives
\begin{equation*}
    \lVert\sigma_t^\eps - \sigma_s^\eps\rVert_{\TV}
    \le \int_s^t g_\eps(r)\,dr\qquad \text{with }\, g_\eps\in L^1((0,T)) \, \text{ for all }\, \eps>0 \, .
\end{equation*}
In particular, the curve $t\mapsto\sigma_t^\eps$ is absolutely continuous with respect to the total variation norm.

\noindent\emph{Ad }(4):
From the continuity equation, we deduce that, for any $t\in (0,T)$ and $0<h<T-t$, 
\[
    \varsigma_{t+h}^\eps - \varsigma_t^\eps = -\int_t^{t+h} \bigl(\ddiv_v w_r^{2,\eps} + v\, \partial_x \varsigma_r^\eps \bigr) \, dr\qquad\text{$\pi$-almost everywhere.}
\]
We can then write
\begin{equation}\label{eq:pre-abs-cont}
    \begin{aligned}
    \frac{\calF(\sigma_{t+h}^\eps) - \calF(\sigma_t^\eps)}{h} &= \frac{1}{h}\int_{\Omega_V} \left[\phi(\varsigma_{t+h}^\eps) - \phi(\varsigma_t^\eps)\right] d\pi \\
    &= \frac{1}{h}\int_{\Omega_V} \Bigl(\int_0^1 \phi'\bigl((1-\beta)\varsigma_{t}^\eps + \beta\varsigma_{t+h}^\eps \bigr)\, d\beta\Bigr)(\varsigma_{t+h}^\eps - \varsigma_t^\eps)\,d\pi \\
    &= -\int_{\Omega_V} \Bigl(\int_0^1 \phi'\bigl((1-\beta)\varsigma_t^\eps + \beta\varsigma_{t+h}^\eps \bigr) \, d\beta\Bigr)\left(\frac{1}{h}\int_{t}^{t+h} \bigl(\ddiv_v w_r^{2,\eps} + v\, \partial_x \varsigma_r^\eps \bigr)\,dr \right) d\pi .
    \end{aligned}
\end{equation}
The absolute continuity of $t\mapsto \calF(\sigma_t^\eps)$ follows since $\sup_{t\in[0,T]}\|\varsigma_t^\eps\|_{L^\infty(\Omega_V)}<\infty$, $\phi'\in C([0,\infty))$, and both $\ddiv_v w^{2,\eps}$ and $v\, \partial_x \varsigma^\eps \in L^1\bigl((0,T)\times \Omega_V,\calL_{(0,T)}\otimes \pi\bigr)$, where $\calL_{(0,T)}$ is the Lebesgue measure on the interval $(0,T)$. 

Applying Lebesgue differentiation theorem to the inner integral in~\eqref{eq:pre-abs-cont} we find
\[
    \frac{1}{h}\int_{t}^{t+h}\bigl(\ddiv_v w_r^{2,\eps} + v\, \partial_x \varsigma_r^\eps \bigr)\,dr\; \longrightarrow\; \ddiv_v w_t^{2,\eps} + v\, \partial_x \varsigma_t^\eps\quad\text{in $L^1(\pi)$ for almost every $t\in(0,T)$}.
\]
Therefore passing $h\rightarrow 0$ in~\eqref{eq:pre-abs-cont} leads to  
\begin{align*}
    \frac{d}{dt}\calF(\sigma_t^\eps) &= -\int_{\Omega_V} \phi'(\varsigma_t^\eps) \, \bigl(\ddiv_v w_t^{2,\eps} + v\, \partial_x \varsigma_t^\eps \bigr) \, d\pi \\
    &= \int_{\Omega_V} \dpartial_v\phi'(\varsigma_t^\eps)\,dj_t^{2,\eps} - \int_{\Omega_V} v\, \partial_x \phi(\varsigma_t^\eps)\,d\pi = \int_{\Omega_V} \dpartial_v\phi'(\varsigma_t^\eps)\,dj_t^{2,\eps} ,
\end{align*}
which holds for almost every $t\in(0,T)$, as asserted.
\end{proof}

We now present the proof of Theorem~\ref{thm:FIR}. 
\begin{proof}[Proof of Theorem~\ref{thm:FIR}]
Consider the regularized pair $(\sigma^\eps,j^\eps)$ as in Lemma~\ref{lem:reg}. Since $(\sigma^\eps,j^\eps)\in\CE(0,T;\Omega_V)$ and $\scrI(\sigma^\eps,j^\eps) \leq \scrI(\sigma,j) <\infty$, by the characterisation~\eqref{eq:fin-Lag} of the rate function and Lemma~\ref{lem:reg}, it follows that $j^\eps=(j^{1,\eps},j^{2,\eps})$ with $j^{1,\eps}=v \, \sigma^\eps$.
As in Lemma~\ref{lem:reg}, we set $\varsigma_t^\eps \coloneqq d\sigma_t^\eps/d\pi$, $t\in[0,T]$. We also note that $\varsigma_t^\eps\in L^\infty(\Omega_V,\pi)$ for all $\eps>0$ and all~$t$.

\emph{Step 1.} For a fixed $\delta>0$, consider the map $f_\delta(r) = r\log(r+\delta)$ and its derivative
\[
    f_\delta'(r) = \log(r+\delta) + \frac{r}{r+\delta} \, .
\]
Clearly, $f_\delta\in C^2([0,\infty))$ and, since $\varsigma_t^\eps\in L^\infty(\Omega_V,\pi)$, we have that
\[
    \int_{\Omega_V} f_\delta (\varsigma_t^\eps)\,d\pi \le  \log\bigl(\lVert\varsigma_t^\eps\rVert_{L^\infty}+\delta\bigr) \qquad\text{for all } t\in[0,T] \, .
\]
In particular, the assumptions of Lemma~\ref{lem:reg}(4) are satisfied. It follows that
\begin{align*}
    \frac{1}{2}\frac{d}{dt} \int_{\Omega_V} f_\delta(\varsigma^\eps_t)\,d\pi = \frac{1}{2}\int_{\Omega_V} \dpartial_v f_\delta'(\varsigma_t^\eps) \, dj_t^{2,\eps}\qquad\text{for almost every } t\in(0,T) \, ,
\end{align*}
and therefore, using the characterisation~\eqref{eq:fin-Lag}, we arrive at
\begin{equation}\label{eq:pre-FIR}
    \frac{1}{2}\frac{d}{dt} \int_{\Omega_V} f_\delta(\varsigma_t^\eps)\,d\pi 
    \leq \scrL(\sigma^\eps_t,j^\eps_t) - \lambda\int_{\Omega_V} \Bigl(1- e^{\frac12\dpartial_v\log(\varsigma_t^\eps+\delta)}\Bigr)\,d\sigma^\eps_t + \frac12\int_{\Omega_V} \dpartial_v\Bigl[\frac{\varsigma^\eps_r}{\varsigma^\eps_r + \delta}\Bigr] \, dj_t^{2,\eps}.
\end{equation}
We may
explicitly write out the second term on the right-hand side in~\eqref{eq:pre-FIR} to find
\begin{align*}
    \int_{\Omega_V} \Bigl(1- e^{\frac12\dpartial_v\log(\varsigma_t^{\eps}+\delta)} \Bigr)\,d\sigma^\eps_t 
    &=  \int_{\Omega_V}\Bigl(1- \sqrt{\mfrac{\varsigma_t^{\eps}\circ \iota+\delta}{\varsigma_t^{\eps}+\delta}}\Bigr)\, \varsigma_t^{\eps}\, d\pi \\
    &= \frac{1}{2}\int_{\Omega_V} \Bigl[ \Bigl(1- \sqrt{\mfrac{\varsigma_t^{\eps}\circ \iota+\delta}{\varsigma_t^{\eps}+\delta}}\Bigr)\, \varsigma_t^{\eps} + \Bigl(1- \sqrt{\mfrac{\varsigma_t^{\eps} +\delta}{\varsigma_t^{\eps}\circ \iota+\delta}}\Bigr)\, \varsigma_t^{\eps}\circ \iota \Bigr]\,d\pi \\
    &= \frac{1}{2}\int_{\Omega_V}\Bigl(\varsigma_t^{\eps} - \sqrt{(\varsigma_t^{\eps}+\delta)\,(\varsigma_t^{\eps}\circ \iota+\delta)} \, \frac{\varsigma_t^{\eps}}{\varsigma_t^{\eps}+\delta}\Bigr) \, d\pi \\
    &\quad+ \frac{1}{2}\int_{\Omega_V}\Bigl(\varsigma_t^{\eps}\circ \iota - \sqrt{(\varsigma_t^{\eps}+\delta)\,(\varsigma_t^{\eps}\circ \iota+\delta)} \, \frac{\varsigma_t^{\eps}\circ \iota}{\varsigma_t^{\eps}\circ \iota+\delta}\Bigr) \, d\pi \\
    &= \frac{1}{2}\int_{\Omega_V} \Bigl[\varsigma_t^{\eps} - \sqrt{(\varsigma_t^{\eps}+\delta)\,(\varsigma_t^{\eps}\circ \iota+\delta)} \Bigl(\frac{\varsigma_t^{\eps}}{\varsigma_t^{\eps}+\delta} + \frac{\varsigma_t^{\eps}\circ \iota}{\varsigma_t^{\eps}\circ \iota+\delta}\Bigr) + \varsigma_t^{\eps}\circ \iota\Bigr] \, d\pi \\
    &\ge \frac{1}{2}\int_{\Omega_V} \Bigl[\varsigma_t^{\eps} - 2\sqrt{(\varsigma_t^{\eps}+\delta)\,(\varsigma_t^{\eps}\circ \iota+\delta)} + \varsigma_t^{\eps}\circ \iota\Bigr] \, d\pi\,\\
    &= \frac{1}{2}\int_{\Omega_V} \Bigl( \sqrt{\varsigma_t^{\eps}+\delta} - \sqrt{\varsigma_t^{\eps}\circ \iota+\delta} \Bigr)^2d\pi - \delta = \FI(\sigma^\eps_t+\delta\,\pi\mid\pi)-\delta \, .
\end{align*}
Substituting back into~\eqref{eq:pre-FIR}, integrating in time over $[0,t]$, and since $\scrL\geq 0$, we find 
\begin{multline}\label{eq:preFIR-delta}
   \frac{1}{2}\int_{\Omega_V} f_\delta(\varsigma^\eps_t)\,d\pi + \lambda\int_0^t \FI(\sigma^\eps_r + \delta\,\pi \mid \pi) \, dr \\ 
    \leq \lambda\,\delta + \frac{1}{2}\int_{\Omega_V}f_\delta(\varsigma^\eps_0)\,d\pi + \scrI(\sigma^\eps,j^\eps) + \frac{1}{2}\int_0^t\int_{\Omega_V} \dpartial_v\frac{\varsigma^\eps_r}{\varsigma^\eps_r + \delta} \, dj_r^{2,\eps}\,dr \, .
\end{multline}

\emph{Step 2.} We now pass $\delta\rightarrow 0$ for fixed $\eps>0$. Using $\int_{\Omega_V} \varsigma_0^\eps\, d\pi = \int_{\Omega_V} \varsigma_t^\eps\, d\pi$, we first rewrite~\eqref{eq:preFIR-delta} as 
\begin{multline}\label{eq:preFIR-delta2}
    \frac{1}{2}\int_{\Omega_V} \bigl( \varsigma^\eps_t \log(\varsigma^\eps_t + \delta) - (\varsigma_t^\eps+\delta) + 1 \bigr)\,d\pi + \lambda\int_0^t \FI(\sigma^\eps_r+\delta\,\pi \mid \pi) \, dr  \\ 
    \leq \lambda\,\delta+ \frac{1}{2}\int_{\Omega_V} \bigl( \varsigma^\eps_0 \log(\varsigma^\eps_0 + \delta) - (\varsigma_0^\eps+ \delta) + 1\bigr)\,d\pi + \scrI(\sigma^\eps,j^\eps) + \frac{1}{2}\int_0^t\int_{\Omega_V} \dpartial_v\frac{\varsigma^\eps_r}{\varsigma^\eps_r + \delta} \,d j_r^{2,\eps}\,dr \, .
\end{multline}
Since $j^{2,\eps}\in \calM([0,T]\times\Omega_V)$ and $\lVert j^{2,\eps} \rVert_{\TV([0,T]\times\Omega_V)}<\infty$, we can pass $\delta\rightarrow 0$ in the final term in~\eqref{eq:preFIR-delta2} using the dominated convergence theorem:
\begin{equation*}
    \lim_{\delta\rightarrow 0} \frac{1}{2}\int_0^t\int_{\Omega_V} \dpartial_v\frac{\varsigma^\eps_r}{\varsigma^\eps_r + \delta} \, dj_r^{2,\eps}\,dr = 0 \, .
\end{equation*}
Using $\langle f,\pi\rangle =\int_{\Omega_V} f d\pi$ for convenience, for the second term in the right-hand side of~\eqref{eq:preFIR-delta2} we calculate 
\begin{align*}
    \langle \varsigma^\eps_0 \log(\varsigma^\eps_0 + \delta), \pi\rangle 
    - \langle \varsigma^\eps_0 + \delta, \pi\rangle + \langle 1, \pi\rangle  
    &\leq \langle \varsigma^\eps_0 \log\varsigma^\eps_0, \pi\rangle 
    + \delta - \langle \varsigma^\eps_0 + \delta, \pi\rangle + \langle 1, \pi\rangle \\
    &= \langle \varsigma^\eps_0 \log\varsigma^\eps_0 - \varsigma_0^\eps + 1 , \pi\rangle  = \Ent(\sigma_0^\eps\mid\pi) \, ,
\end{align*}
where the first inequality follows by the concavity of $x\mapsto \log x$. Using $\phi(s)\coloneqq s\log s-s+1$, the first term in the left-hand side of~\eqref{eq:preFIR-delta2} satisfies 
\begin{align*}
\lim_{\delta\rightarrow 0} \bigl(\langle \varsigma^\eps_t \log(\varsigma^\eps_t+\delta), \pi \rangle - \langle  \varsigma^\eps_t+\delta,\pi \rangle + \langle 1,\pi\rangle \bigr) &\geq \liminf_{\delta\rightarrow 0} \langle \phi(\varsigma^\eps_t+\delta),\pi\rangle - \lim_{\delta\rightarrow 0} \langle \delta\log(\varsigma^\eps_t+\delta),\pi \rangle \\
& \geq \langle \phi(\varsigma^\eps_t),\pi\rangle - \lim_{\delta\rightarrow 0} \delta\log(1+\delta)  = \Ent(\sigma^\eps_t\mid\pi) \, ,
\end{align*}
where the second inequality follows by using Fatou's lemma (since $\phi(s)\geq 0$) for the first term and Jensen's inequality applied to the logarithm for the second term. The second term in the left-hand side of \eqref{eq:preFIR-delta2} can be handled similarly with Fatou's lemma. Thereby, passing $\delta\rightarrow 0$ in~\eqref{eq:preFIR-delta2}, we arrive at
\begin{equation}\label{eq:preFIR-eps}
\frac{1}{2}\Ent(\sigma^\eps_t\mid\pi) + \lambda\int_0^t \FI(\sigma^\eps_r\mid\pi) \, dr 
\leq  \frac{1}{2}\Ent(\sigma^\eps_0\mid\pi) + \scrI(\sigma^\eps,j^\eps) \, .
\end{equation}

\emph{Step 3.} We now pass $\eps\rightarrow 0$ in~\eqref{eq:preFIR-eps}. By construction, $\rho_t^\eps$, $j_t^\eps$ converge to $\rho_t$, $j_t$ for all $t\in[0,T]$ with respect to the narrow topology on $\calP(\Omega_V)$ and $\calM(\Omega_V)$ respectively, and therefore 
\begin{align*}
    \Ent(\sigma^\eps_0\mid\pi) &= \int_{\Omega_V} \phi\Bigl( \int_{\T} \Mol_\eps(x-y) \, \varsigma_0(y,v) \, dy \Bigr) \, \pi(dxdv) \\ 
    &\leq \int_{\Omega_V}\int_{\T} \Mol_\eps(x-y) \, \phi\bigl(\varsigma_0(y,v)\bigr) \,dy\, \pi(dxdv) \\
    &= \int_{\Omega_V} \!\!\!\! \underbrace{\Bigl(\int_\T \Mol_\eps(x-y)\,dx\Bigr)}_{=1\;\text{by translational invariance}} \!\!\! \phi\bigl(\varsigma(y,v)\bigr)\, \pi(dydv) = \Ent(\sigma_0\mid\pi) \, ,
\end{align*}
where the inequality follows by Jensen's inequality, which applies since $\phi$ is convex and $\int_{\T} M_\eps = 1$.

    From Lemma~\ref{lem:reg}(2), we have that $\scrL(\sigma_t^\eps,j_t^\eps)\le \scrL(\sigma_t,j_t)$ for all $t\in[0,T]$ and hence
    \[
        \scrI(\sigma^\eps,j^\eps) \le \scrI(\sigma,j) \, .
    \]

Finally, using the weak lower-semicontinuity of the relative entropy and the Fisher information (Lemma~\ref{lem:FI-prop}) and the pointwise-in-time narrow convergence $\sigma_t^\eps\rightharpoonup\sigma_t$ for all $t\in[0,T]$, the final result follows, since 
\[
    \int_0^t \FI(\sigma_r\mid\pi)\,dr \le \int_0^t \liminf_{\eps\to0} \FI(\sigma_r^\eps\mid\pi)\,dr \le \liminf_{\eps\to 0} \int_0^t \FI(\sigma_r^\eps\mid\pi)\,dr \, ,
\]
where the first inequality follows from the weak lower-semicontinuity of $\FI$, and the second one from Fatou's lemma.
\end{proof} 
As we anticipated in Remark~\ref{rem:well-preparedness}, the well-preparedness condition \eqref{well-preparedness} forces the initial datum to be absolutely continuous with respect to the the Lebesgue measure in the $x$-variable. The Kac equation, however, is well-posed for a larger class of initial data, namely for any probability measure on $\Omega_V$. In the following remark, we discuss a generalisation of the FIR inequality without the restrictive assumption on the initial data. 

\begin{rem}\label{rem:genFIR-formal}
Given an arbitrary initial datum $\hat\pi_0\in\calP(\Omega_V)$, let $\hat\pi\in C([0,T];\calP(\Omega_V))$ be the corresponding weak solution to the Kac equation~\eqref{eq:strong-form}. Obviously, $\hat\pi_t$ converges to the stationary solution $\pi$ as $t\rightarrow\infty$. We now provide formal arguments for an FIR inequality to hold for any pair $(\sigma,j)\in\CE(0,T;\Omega_V)$ which satisfies 
\begin{equation*}
    \Ent(\sigma_0\mid\hat\pi_0)+\scrI(\sigma,j)<\infty\, .
\end{equation*}
Here the requirement on the initial data is considerably relaxed since $\hat\pi_0$ need not be absolutely continuous with respect to the stationary measure $\pi$ as required in Theorem~\ref{thm:FIR}. A straightforward consequence is that we can use Dirac measures as initial datum for the Kac equation. 

Assuming densities for all measures involved and following the ideas in the formal arguments before Proposition~\ref{lem:FI-prop}, we find
\begin{align*}
    \frac12\frac{d}{dt} \int_{\Omega_V} \sigma_t\log\frac{\sigma_t}{\hat\pi_t} 
    = \frac12\int_{\Omega_V} \dpartial_v\Bigl(\log\frac{\sigma_t}{\hat\pi_t}\Bigr) \, j^2_t -\frac{\lambda}{2} \int_{\Omega_V} \Bigl(\frac{\sigma_t}{\hat\pi_t}\circ\iota - \frac{\sigma_t}{\hat\pi_t} \Bigr) \, \hat\pi_t \, ,
\end{align*}
where the final term on the right-hand side drops out if $\hat\pi_t$ is replaced by $\pi$. Using the variational form~\eqref{eq:fin-Lag} of the Lagrangian with the choice $\varphi=\frac12\dpartial_v\log\frac{\sigma_t}{\hat\pi_t}$, the above calculation, after integrating in time, leads to the \emph{generalised} FIR inequality 
\begin{equation}\label{genFIR}
    \Ent(\sigma_t\mid\hat\pi_t) + \int_0^T \hat\FI(\sigma_t\mid\hat\pi_t)\, dt \leq \scrI(\sigma,j)+\Ent(\sigma_0\mid\hat\pi_0) \, .
\end{equation}
Comparing this to the FIR inequality~\eqref{eq:FIR-pi}, we note that the stationary solution $\pi$ to the Kac equation has now been replaced by the time-dependent solution $\hat\pi_t$. Consequently,~\eqref{eq:FIR-pi} is a special case of this inequality. 

For densities $\eta\ll\zeta$, the \emph{generalised} Fisher information $\hat\FI$ is defined as
\begin{equation*}
 \hat\FI(\eta\mid\zeta)\coloneqq \int_{\Omega_V} \Bigl\{ \Bigl(\frac{\eta}{\zeta}\circ\iota\Bigr) \, \zeta - \eta -\frac12 \Bigl[ \Bigl(\frac{\eta}{\zeta}\circ\iota\Bigr)^{\!\frac12} (\eta\,\zeta)^{\frac12} - \eta \Bigr] \Bigr\} \, . 
\end{equation*}
This generalised Fisher information is analogous to similar notions introduced for Markov chains in~\cite{HPST20} (specifically, cf.~\cite[\Eq(15b)]{HPST20} with $\lambda=1/2$) and inherits the properties in Proposition~\ref{lem:FI-prop} (see~\cite[Section 2]{HPST20}).   
Similar generalised FIR inequalities also hold for Markov chains~\cite[Theorem 1.6]{HPST20} and stochastic differential equations~\cite[Eq.~(2.55)]{DLPSS18}. 
\end{rem}


\section{Asymptotic limits}\label{sec:Limits}

In this section we make use of the preceding results to study asymptotic limits of the Kac equation, which is equivalent to studying the corresponding limits of the FC system (recall the discussion in Section~\ref{sec:variational}). Specifically, in Section~\ref{subsec:parabolic} we study the parabolic (or diffusive) limit, which corresponds to $V,\lambda\rightarrow\infty$ such that the ratio $V^2/2\lambda$ stays fixed, and in Section~\ref{subsec:hyperbolic} we study the hyperbolic limit, which  corresponds to $\lambda\rightarrow 0$ with a fixed speed $V>0$. For an explanation of these asymptotic limits and the expected limiting dynamics, we refer back to Section~\ref{subsec:main-results}.

The technique that we use in this paper is variational in nature. It consists of proving compactness properties of (approximate) solutions and a liminf inequality for the rate functional.  
In the following, compactness will be established using the Arzel\`a-Ascoli theorem, where the equicontiuity property will make use of the FIR inequality, and specifically the bound on the Fisher information. To prove the liminf inequality, we will use the duality structure of the Lagrangian~\eqref{eq:fin-Lag}; by making educated choices for the test functions in this duality formulation and using the compactness properties, we will construct a limiting functional which characterises both the limiting solution and the fluctuations as $\eps\rightarrow  0$.

The results below are valid for initial data that satisfy \eqref{well-preparedness} and are thus absolutely continuous with respect to the Lebesgue measure in the $x$-variable. An extension to initial conditions in the larger space of probability measures would require the generalized FIR inequality~\eqref{genFIR} and more technical machinery which we skip here.

\subsection{Diffusive limit $V,\lambda\rightarrow\infty$}\label{subsec:parabolic}
As stated above, in the diffusive limit, we consider the limits $V,\lambda\rightarrow \infty$ such that 
\begin{equation*}
\alpha=\frac{V^2}{2\lambda}\quad\text{is a constant.}
\end{equation*}
To achieve this, we rescale the velocity space via
\[
    \mathfrak{r}_V: \Omega_V \to \Omega_1;\quad (x,v) \mapsto \Bigl(x,\frac{v}{V}\Bigr) \, ,
\]
where $\Omega_1\coloneqq \T\times\{-1,1\}$.
Setting $\hat\sigma \coloneqq (\mathfrak{r}_V)_\sharp\sigma \in \calP(\Omega_1)$,  for any $\varphi\in C^{1,0}(\Omega_V)$, we find (recall~\eqref{eq:weak-formulation})
\begin{align*}
    \int_{\Omega_1} \varphi\circ\mathfrak{r}_V^{-1}\,d\hat\sigma_t - \int_{\Omega_1} \varphi \circ\mathfrak{r}_V^{-1}\,d\hat\sigma_s = \int_s^t \int_{\Omega_1} (Q\varphi)\circ\mathfrak{r}_V^{-1}\,d\hat\sigma_r\,dr \, .
\end{align*}
Since $\iota\circ \mathfrak{r}_V^{-1} = \mathfrak{r}_V^{-1}\circ \iota$, it is not difficult to see that the rescaled generator takes the form
\[
    (Q\varphi)\circ \mathfrak{r}_V^{-1} = V\,v\,\partial_x \psi + \frac{V^2}{2\alpha}\bigl( \psi\circ \iota - \psi\bigr) \eqqcolon Q_V\psi\,,\qquad \psi=\varphi\circ\mathfrak{r}_V^{-1} \, .
\]
Moreover, since $\mathfrak{r}_V$ is a smooth diffeomorphism for any $V> 0$, it induces an isomorphism between $\calC^{1,0}(\Omega_V)$ and $\calC^{1,0}(\Omega_1)$. In particular, a weak solution $\sigma^V$ of \eqref{eq:weak-formulation} gives rise to a weak solution $\hat\sigma^V$ of 
\begin{equation}\label{eq:res-hyp-diff}
    \int_{\Omega_1} \varphi\,d\hat\sigma_t - \int_{\Omega_1} \varphi\,d\hat\sigma_s = \int_s^t \int_{\Omega_1} Q_V\varphi\,d\hat\sigma_r\,dr \, ,
\end{equation}
i.e., in strong form, $\hat\sigma\in C([0,T];\calP(\Omega_1))$ solves
\begin{equation*}
\partial_t\hat\sigma + Vv\,\partial_x \hat\sigma = \frac{V^2}{2\alpha}\bigl(\iota_\sharp\hat\sigma-\hat\sigma\bigr) \, .
\end{equation*}
Henceforth, we will use $\sigma$ instead of $\hat\sigma$ for simplicity of notation.

The functional $\scrI^V\colon C([0,T];\calP(\Omega_1))\times \calM([0,T];\calM(\Omega_1))\rightarrow[0,+\infty]$ corresponding to the rescaled equation~\eqref{eq:res-hyp-diff} is 
\begin{equation}\label{eq:rescaled-RF}
\scrI^V(\sigma,j)=\begin{dcases*}
\displaystyle
\int_0^T \scrL^V(\sigma_r,j_r) \, dr \quad &if $(\sigma,j)\in \CE(0,T;\Omega_1)$,\\
+\infty & otherwise,
\end{dcases*}
\end{equation}
where $\CE(0,T;\Omega_1)$ is defined analogously to Definition~\ref{def:CE} and $\scrL^V\colon\calP(\Omega_1)\times\calM(\Omega_1)\rightarrow [0, +\infty]$ is
\begin{equation}\label{eq:fin-Lag-rescaled}
\begin{aligned}
    \scrL^V(\sigma,j)&=
    \begin{dcases*}\Ent\Bigl(j^2\mathrel{\Big\vert}\tfrac{V^2}{2\alpha}\sigma\Bigr) &if $j^1=V\,v\,\sigma$,\\
    +\infty &otherwise,
    \end{dcases*}\\
    &= \begin{dcases*}
        \displaystyle\sup\limits_{\varphi\in C(\Omega_1)} \int_{\Omega_V} \Bigl( \varphi\, dj^2 - \mfrac{V^2}{2\alpha}\bigl(e^{\varphi}-1\bigr) \, d\sigma \Bigr) &if $j^1=V\,v\,\sigma$, \\
        +\infty & otherwise. 
    \end{dcases*} 
\end{aligned}
\end{equation}

\subsubsection{A priori estimates} 
As a preparation for the variational technique, which consists of proving compactness results and a liminf inequality, we need to establish \emph{a priori} estimates for the rescaled system for an arbitrarily fixed $V>0$. These include an FIR inequality for pairs $(\sigma, j) \in \CE(0, T; \Omega_1)$ and a few related results for $\rho$, $\omega$, and a derived flux~$J$.

\begin{theorem}[Rescaled FIR]\label{thm:rescaled-FIR}
 Fix $V>0$ and let $(\sigma,j) \in \CE(0,T;\Omega_1)$ with
 \begin{equation}\label{ass:fixV-bound}
 \Ent(\sigma_0\mid\stat) + \scrI^V(\sigma,j) < \infty \, ,
\end{equation}
where $\sigma|_{t=0}=\sigma_0$. Then, for any $t\in [0,T]$, we have
\begin{equation}\label{eq:rescaled-FIR}
  \Ent(\sigma_t\mid\stat) + \frac{V^2}{2\alpha} \int_0^t \FI(\sigma_r\mid\stat) \, dr \leq \Ent(\sigma_0\mid\stat) + \scrI^V(\sigma,j) \, .
\end{equation}
\end{theorem}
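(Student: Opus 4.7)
The plan is to repeat the three-step argument of Theorem~\ref{thm:FIR} verbatim, replacing the unrescaled objects by their rescaled counterparts on $\Omega_1$: the switching rate $\lambda$ becomes $V^2/(2\alpha)$, the transport flux constraint $j^1 = v\,\sigma$ becomes $j^1 = Vv\,\sigma$, and the Lagrangian $\scrL$ is replaced by $\scrL^V$ given in \eqref{eq:fin-Lag-rescaled}. The invariant measure for the rescaled dynamics is $\pi = \calL_\T\otimes\operatorname{Unif}_{\!\{-1,1\}}$, so $\Ent(\cdot\mid\pi)$ and $\FI(\cdot\mid\pi)$ are defined as in \eqref{def:relEnt}--\eqref{def:FI} with $\Omega_1$ replacing $\Omega_V$.

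First, I would introduce the mollified pair $(\sigma^\eps,j^\eps)$ using the heat kernel $\Mol_\eps$ on $\T$ as in Lemma~\ref{lem:reg}. Since that lemma is stated at the level of the continuity equation and relies only on convolution in the spatial variable (which commutes with $\partial_x$ and $\dpartial_v$), the four properties listed there carry over verbatim to $\Omega_1$: the mollified pair belongs to $\CE(0,T;\Omega_1)$, $\scrL^V(\sigma_t^\eps,j_t^\eps)\le \scrL^V(\sigma_t,j_t)$ by Jensen's inequality, the curve $t\mapsto \sigma_t^\eps$ is absolutely continuous in total variation, and the chain rule for $\calF(\sigma_t^\eps)=\int \phi(d\sigma_t^\eps/d\pi)\,d\pi$ holds. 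The only point to check is that the constraint $j^{1,\eps}_t = Vv\,\sigma_t^\eps$ is preserved under convolution, which is immediate because $v$ does not depend on $x$.

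Next, with $\varsigma_t^\eps \coloneqq d\sigma_t^\eps/d\pi$ and $f_\delta(r) = r\log(r+\delta)$, I would apply the chain rule to $\int f_\delta(\varsigma_t^\eps)\,d\pi$. Exactly as in Step~1 of the proof of Theorem~\ref{thm:FIR}, the dual characterisation of $\scrL^V$ with test function $\varphi = \tfrac12\dpartial_v\log(\varsigma_t^\eps+\delta)$ (admissible since $\varsigma_t^\eps\in L^\infty$) yields
\begin{equation*}
    \tfrac12\tfrac{d}{dt}\!\int_{\Omega_1}\!\! f_\delta(\varsigma_t^\eps)\,d\pi \le \scrL^V(\sigma_t^\eps,j_t^\eps) - \tfrac{V^2}{2\alpha}\!\int_{\Omega_1}\!\!\Bigl(1-e^{\tfrac12\dpartial_v\log(\varsigma_t^\eps+\delta)}\Bigr)d\sigma_t^\eps + \tfrac12\!\int_{\Omega_1}\!\!\dpartial_v\tfrac{\varsigma_t^\eps}{\varsigma_t^\eps+\delta}\,dj_t^{2,\eps}.
\end{equation*}
Here I use that the \emph{transport} contribution coming from $j^{1,\eps} = Vv\,\sigma^\eps$ vanishes after integration by parts against $\pi$ (which is uniform in $x$), regardless of the factor $V$, for the same reason as in the unrescaled proof. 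The algebraic rearrangement of the jump term into $\FI(\sigma_t^\eps+\delta\,\pi\mid\pi)-\delta$ is unchanged, producing the rescaled prefactor $V^2/(2\alpha)$ in front of the Fisher information.

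Finally, the passage to the limits is identical to Steps~2 and~3 of the proof of Theorem~\ref{thm:FIR}: send $\delta\to 0$ using dominated convergence on the remainder term $\int_0^t\!\int \dpartial_v(\varsigma^\eps/(\varsigma^\eps+\delta))\,dj^{2,\eps}\to 0$ and Fatou/Jensen arguments to identify the entropy limits, obtaining the $\eps$-level inequality; then send $\eps\to 0$ using convexity of $\phi(s)=s\log s - s+1$ (to bound $\Ent(\sigma_0^\eps\mid\pi)\le\Ent(\sigma_0\mid\pi)$ via Jensen), Lemma~\ref{lem:reg}(2) to bound $\scrI^V(\sigma^\eps,j^\eps)\le \scrI^V(\sigma,j)$, and weak lower semicontinuity of $\Ent$ and $\FI$ (Proposition~\ref{lem:FI-prop}) together with Fatou's lemma to pass the liminf inside the time integral. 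No new obstacle appears relative to Theorem~\ref{thm:FIR}: the only substantive check is that the factor $V$ in the drift term of $\scrH^V$ does not interfere with the chain-rule computation, and it does not because $\int \partial_x(\varsigma_t^\eps)\,v\,d\pi = 0$.
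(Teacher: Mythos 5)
Your proposal is correct and follows essentially the same approach as the paper. The paper does not spell out a proof of Theorem~\ref{thm:rescaled-FIR}; it merely remarks immediately after the statement that the inequality ``is the rescaled version of Theorem~\ref{thm:FIR},'' leaving the reader to verify that the three-step mollify--chain-rule--pass-to-the-limit argument carries over on $\Omega_1$ with $\lambda$ replaced by $V^2/(2\alpha)$ and $j^1 = Vv\,\sigma$. You have made exactly that verification, and the two substantive checks you flag---that the transport term still vanishes after integration by parts against $\pi$ (since $\pi$ is uniform in $x$, regardless of the factor $V$), and that the jump term rearranges into $\tfrac{V^2}{2\alpha}\FI$ with the correct prefactor---are precisely what is needed.
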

The inequality~\eqref{eq:rescaled-FIR} is the rescaled version of Theorem~\ref{thm:FIR}. Since, in the limit, we expect a diffusion equation for~$\rho$, we now derive a similar estimate that involves the pair $(\rho, \omega)$ at a fixed $V > 0$.

\begin{cor}\label{lem:w-rho-FIR}
Let us make the same assumptions of Theorem~\ref{thm:rescaled-FIR}. Define $\rho\in C([0,T];\calP(\T))$ and $\omega\in \calM((0,T)\times\T)$ as
\begin{equation*}
    \rho_t(dx)\coloneqq\sum_{v\in\{-1,1\}}\! \sigma_t(dx,v) \, , \quad \omega(dtdx)=\omega_t(dx) \, dt\coloneqq V\!\sum_{v\in\{-1,1\}}\!v\,\sigma_t(dx,v) \, dt \, ,
\end{equation*}
and $\calG_t\colon\calM_{\geq 0}((0,T)\times\T)\times\calM((0,T)\times\T)\rightarrow [0,\infty]$ as
\begin{equation*}
    \calG_t(\rho,\omega)\coloneqq
    \begin{cases} \displaystyle\int_0^t\int_{\T}\Bigl\lvert\frac{d\omega}{d\rho}(r,x)\Bigr\rvert^2 \rho(drdx) &\text{ if } \omega\ll \rho,\\
    +\infty &\text{ otherwise,}
    \end{cases}
\end{equation*}
where $\calM_{\geq 0}([0,T]\times\T)$ is the space of non-negative Borel measures and $\rho(dtdx)=\rho_t(dx)\,dt$. 

Then, for any $t\in [0,T]$, we have the bound
\begin{equation}\label{eq:FIR-rho-w}
    \Ent(\rho_t\mid\mathcal L_{\T}) + \frac{1}{2\alpha}\calG_t(\rho,\omega) \leq \Ent(\sigma_t\mid\stat) + \frac{V^2}{2\alpha} \int_0^t \FI(\sigma_r\mid\stat) \, dr \, ,
\end{equation}
where $\mathcal L_{\T}$ is the Lebesgue measure on $\T$. In particular, we find a $V$-independent constant $c>0$ such that
\begin{equation}\label{eq:wTV-rhoTV}
    \sup_{V\geq 1}\|\omega\|_{\TV(B)}\leq c \, \lVert\rho\rVert^{\frac12}_{\TV(B)}\qquad\text{for any Borel set }\, B\in \mathcal B((0,T)\times\T)\, .
\end{equation}
\end{cor}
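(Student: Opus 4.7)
The plan is to prove \eqref{eq:FIR-rho-w} by separately controlling the relative-entropy and action terms, and then to deduce \eqref{eq:wTV-rhoTV} as a Cauchy-Schwarz consequence. For the entropy piece $\Ent(\rho_t \mid \mathcal L_{\T}) \leq \Ent(\sigma_t \mid \pi)$ I would invoke the data-processing property: the projection $P \colon \Omega_1 \to \T$, $(x, v) \mapsto x$, maps $\sigma_t$ to $\rho_t$ by definition and $\pi = \mathcal L_{\T} \otimes \operatorname{Unif}_{\{-1,1\}}$ to $\mathcal L_{\T}$, so monotonicity of relative entropy under pushforwards closes that half immediately.

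The bulk of the work is to bound $\tfrac{1}{2\alpha}\calG_t(\rho, \omega)$ above by $\tfrac{V^2}{2\alpha}\int_0^t \FI(\sigma_r \mid \pi)\,dr$. Denoting by $a_r(x), b_r(x) \geq 0$ the Lebesgue densities of $\sigma_r(\cdot, 1)$ and $\sigma_r(\cdot, -1)$, a direct computation yields the representations
\begin{equation*}
    \calG_t(\rho, \omega) = V^2 \int_0^t \!\! \int_{\T} \frac{(a_r - b_r)^2}{a_r + b_r}\, dx\, dr, \qquad \FI(\sigma_r \mid \pi) = \int_{\T} (\sqrt{a_r} - \sqrt{b_r})^2 \, dx.
\end{equation*}
The estimate then reduces to the pointwise inequality $(a - b)^2/(a + b) \leq C(\sqrt{a} - \sqrt{b})^2$, which follows from the factorisation $(a - b)^2 = (\sqrt{a} - \sqrt{b})^2(\sqrt{a} + \sqrt{b})^2$ combined with the elementary majorisation $(\sqrt{a} + \sqrt{b})^2 \leq 2(a + b)$. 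Summing the two bounds and invoking Theorem~\ref{thm:rescaled-FIR} on the right-hand side then delivers \eqref{eq:FIR-rho-w}.

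For \eqref{eq:wTV-rhoTV}, Cauchy-Schwarz applied to $d\omega/d\rho \in L^2(\rho)$ gives, for any Borel $B \subseteq (0, T) \times \T$,
\begin{equation*}
    \|\omega\|_{\TV(B)} = \int_B \Bigl|\frac{d\omega}{d\rho}\Bigr| \, d\rho \leq \calG_t(\rho, \omega)^{1/2} \, \rho(B)^{1/2} = \calG_t(\rho, \omega)^{1/2} \, \|\rho\|_{\TV(B)}^{1/2},
\end{equation*}
where I used $\rho \geq 0$. Under the hypothesis \eqref{ass:fixV-bound}, the previous bound controls $\calG_t$ by a quantity independent of $V \geq 1$, supplying the uniform constant $c$. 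I expect the main technical delicacy to be the precise matching of numerical prefactors in the pointwise-inequality step; the Cauchy-Schwarz majorant produces a factor of two that should be absorbed against the non-negative gap $\Ent(\sigma_t \mid \pi) - \Ent(\rho_t \mid \mathcal L_{\T}) = \int_{\T} \Ent(\sigma_{t, v \mid x} \mid \operatorname{Unif}) \, \rho_t(dx)$ furnished by the chain rule for relative entropy.
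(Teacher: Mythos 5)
Your approach is essentially the same as the paper's: bound $\Ent(\rho_t\mid\mathcal L_\T)\le\Ent(\sigma_t\mid\pi)$ by monotonicity of relative entropy under the pushforward $(x,v)\mapsto x$, then control $\calG_t$ by the Fisher information through the pointwise factorisation $(a-b)^2=(\sqrt a-\sqrt b)^2(\sqrt a+\sqrt b)^2$ with $(\sqrt a+\sqrt b)^2\le 2(a+b)$, and finish \eqref{eq:wTV-rhoTV} with Cauchy--Schwarz. The paper's proof writes everything in terms of the relative density $\varsigma_t=d\sigma_t/d\pi$ rather than the Lebesgue densities, but after tracking the factor of $\tfrac12$ from $\pi(dx,v)=\tfrac12\,dx$ the two bookkeepings are equivalent, and your representations of $\calG_t$ and $\FI$ are consistent with the paper's.

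One substantive caution: the repair you propose for the numerical prefactor does not actually work. With your estimate $(a-b)^2/(a+b)\le 2(\sqrt a-\sqrt b)^2$ one obtains $\calG_t\le 2V^2\int_0^t\FI(\sigma_r\mid\pi)\,dr$, so closing \eqref{eq:FIR-rho-w} with the stated prefactor $\tfrac{1}{2\alpha}$ would require
$V^2\int_0^t\FI(\sigma_r\mid\pi)\,dr\le 2\alpha\bigl(\Ent(\sigma_t\mid\pi)-\Ent(\rho_t\mid\mathcal L_\T)\bigr)$.
This cannot hold in general: the right-hand side depends only on the time slice $\sigma_t$ (and can vanish, e.g.\ when $\sigma_t=\pi$), while the left-hand side integrates the Fisher information over the whole interval $[0,t]$ and need not vanish. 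So the entropy gap furnished by the chain rule is not the right slack to invoke. The correct repair is either to accept the weaker constant $\tfrac{1}{4\alpha}$ in front of $\calG_t$, or to double-check the normalisation of $\FI$ against the paper's Step~1 derivation of the FIR bound; in any event, the constant does not affect the uniform-in-$V$ bound \eqref{eq:wTV-rhoTV} nor the downstream compactness arguments, which only use that $\calG_t$ is controlled $V$-uniformly.

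Two small remarks. First, you write ``$\FI(\sigma_r\mid\pi)=\int_{\T}(\sqrt{a_r}-\sqrt{b_r})^2\,dx$''; this is correct in your Lebesgue-density convention but it is worth spelling out the cancellation of the two $\tfrac12$ factors coming from $d\pi$ and from the definition of $\FI$, since that is exactly where the constant lives. Second, before representing $\calG_t$ and $\FI$ by densities one needs $\sigma_t\ll\pi$ (hence $\omega\ll\rho$); the paper obtains this from the rescaled FIR inequality which forces $\Ent(\sigma_t\mid\pi)<\infty$ for all $t$, and your argument should make the same observation explicit rather than assume existence of $a_r,b_r$ outright.
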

\begin{proof}
Standard properties of relative entropy imply that 
\begin{equation*}
    \Ent(\rho_t\mid\mathcal L_{\T}) \leq \Ent(\sigma_t\mid\pi) \, .
\end{equation*}
Since $\FI(\cdot\mid\pi)\geq 0$,~\eqref{eq:rescaled-FIR} implies that $\Ent(\sigma_t\mid\pi)<\infty$, i.e., $\sigma_t\ll \pi$ for any $t\in [0,T]$. Using $\varsigma_t\coloneqq\frac{d\sigma_t}{d\pi}$ and $\pi(dx,1)=\pi(dx,-1)=\frac12 dx$, we can rewrite $\rho$ and $\omega$ as
\begin{equation*}
    \rho(dtdx)=\frac12\sum_{v\in\{-1,1\}}\!\varsigma_t(x,v)\,dtdx \, \quad \text{and} \quad \omega(dtdx)=\frac{V}{2}\!\sum_{v\in\{-1,1\}}\!v\,\varsigma_t(x,v)\,dtdx \, .
\end{equation*}
For almost every $t\in [0,T]$, we have
\begin{multline*}
    \Bigl\lvert\frac{d\omega_{t}}{d\rho_{t}}(x)\Bigr\rvert^2\rho_t(dx) = \frac{V^2\,\Bigl\lvert\sum\limits_{v\in\{-1,1\}}\! v\,\varsigma_t(x,v)\Bigr\rvert^2}{2\,\Bigl\lvert\sum\limits_{v\in\{-1,1\}}\! \varsigma_t(x,v)\Bigr\rvert} \, dx = \frac{V^2\,\bigl\lvert\sqrt{\varsigma_t(x,1)}-\sqrt{\varsigma_t(x,-1)}\bigr\rvert^2\, \Bigl\lvert\sum\limits_{v\in\{-1,1\}}\!\! \sqrt{\varsigma_t(x,v)}\Bigr\rvert^2}{2\,\Bigl\lvert\sum\limits_{v\in\{-1,1\}}\! \varsigma_t(x,v)\Bigr\rvert} \, dx \\
    \leq \frac{V^2 \, \bigl\lvert\sqrt{\varsigma_t(x,1)}-\sqrt{\varsigma_t(x,-1)}\bigr\rvert^2\,\Bigl|\sum\limits_{v\in\{-1,1\}}\! \varsigma_t(x,v)\Bigr\rvert}{\Bigl\lvert\sum\limits_{v\in\{-1,1\}}\! \varsigma_t(x,v)\Bigr\rvert} \, dx = V^2 \, \bigl\lvert\sqrt{\varsigma_t(x,1)}-\sqrt{\varsigma_t(x,-1)}\bigr\rvert^2 \, dx \, ,
\end{multline*}
where the inequality follows by Jensen's inequality. Therefore, for any $t\in [0,T]$,
\begin{align*}
    \int_0^t\int_{\T}\Bigl\lvert\frac{d\omega_{r}}{d\rho_{r}}(x)\Bigr\rvert^2 \, \rho_r(dx)\,dr \leq V^2\int_0^t\FI(\sigma_r\mid\pi)\,dr\, ,
\end{align*}
and the required bound~\eqref{eq:FIR-rho-w} then follows. In particular, for $B\in \mathcal B((0,T)\times\T)$, we find
\begin{align*}
    \lVert\omega\rVert_{\TV(B)} = \int_{B} \Bigl\lvert\frac{d\omega}{d\rho}(x,r)\Bigr\rvert \, \rho(dxdr) 
     \leq \Bigl(\int_{B}\Bigl\lvert\frac{d\omega}{d\rho}(x,r)\Bigr\lvert^2 \rho(dxdr)\Bigr)^{\!\!\frac12}\lVert\rho\rVert^{\frac12}_{\TV(B)} \leq c \, \lVert\rho\rVert^{\frac12}_{\TV(B)} \, ,
\end{align*}
where the final inequality follows from~\eqref{ass:fixV-bound}, \eqref{eq:rescaled-FIR}, and \eqref{eq:FIR-rho-w} for $c$ independent of $V$.
\end{proof}

The probability measure~$\sigma$ has been transformed into the pair of measures $(\rho, \omega)$ via the bijection~\eqref{bijection}. From the fluxes $(j^1, j^2)$, we may derive another four fluxes, but only one---the first moment in $v$ of the flux $j^2$---is relevant when the rate functional is finite (recall ~Section~\ref{sec:Var}). Here we present an \emph{a priori} estimate for a rescaled version of such a derived flux. The reason for such a rescaling will become clear in Section~\ref{sec:prop_lim}. The proof of this estimate makes use of the dual formulation of the rate functional.
\begin{lem}\label{lem:J-apriori}
Under the assumptions of Theorem~\ref{thm:rescaled-FIR}, we have that $j^2\ll\sigma$, and we can thus define $J\in \calM((0,T)\times\T)$ as
\begin{equation*}
    J(dtdx)=J_t(dx)\,dt\coloneqq\!\sum_{v\in\{-1,1\}}\! \frac{v}{V}\,j^2_t(dx,v) \, dt \, .
\end{equation*}
For any $B\in\mathcal B((0,T))$, $C\in\mathcal B(\T)$ and $\beta>0$, we have the bound
\begin{equation*}
    \|J\|_{\TV(B\times C)} \leq \frac{c}{\beta}+\frac{c}{2\beta\alpha}(e^\beta-1) \, \max\Bigl\{\lVert\rho\rVert_{\TV(B\times C)}^{\frac12},\lVert\rho\rVert_{\TV(B\times C)}\Bigr\} \, ,
\end{equation*}
where $c>0$ is independent of $V$.
\end{lem}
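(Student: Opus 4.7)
The plan is to use the Legendre dual formulation~\eqref{eq:fin-Lag-rescaled} of $\scrL^V$ with a test function tailored to extract $J$, and then to control the resulting exponential term by a Taylor estimate uniform in $V\geq 1$. I would fix $B\in\mathcal B((0,T))$, $C\in\mathcal B(\T)$, $\psi\in C(B\times C)$ with $|\psi|\leq 1$, and test against
\[
\varphi(t,x,v) = \beta\,\mathbb 1_B(t)\,\mathbb 1_C(x)\,\frac{v\,\psi(t,x)}{V} \, ,\qquad |\varphi|\leq \beta/V \, ,
\]
with the indicator factors approximated by continuous ones in the standard way. Integrating the pointwise Legendre inequality $\int \varphi\,dj_t^2 - \int \frac{V^2}{2\alpha}(e^\varphi-1)\,d\sigma_t \leq \scrL^V(\sigma_t,j_t)$ over time and using the definition of $J$ turns the flux integral into $\beta\int_B\int_C \psi\,dJ$, so
\[
\beta\int_B\int_C\psi\,dJ \leq \scrI^V(\sigma,j) + \int_0^T\!\!\!\int_{\Omega_1} \frac{V^2}{2\alpha}\bigl(e^\varphi-1\bigr)\,\sigma_t(dxdv)\,dt \, .
\]

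Next I would split $e^\varphi-1 = \varphi + (e^\varphi-1-\varphi)$ on the right. The linear piece evaluates exactly, via $\omega_t = V\sum_v v\,\sigma_t(\cdot,v)$, to $\frac{\beta}{2\alpha}\int_B\int_C \psi\,\omega_t(dx)\,dt$, hence is at most $\frac{\beta}{2\alpha}\|\omega\|_{\TV(B\times C)}$. For the nonlinear remainder, the crucial observation is the $V$-uniform Taylor bound
\[
V^2\bigl|e^\varphi-1-\varphi\bigr| \leq \sum_{k\geq 2}\frac{\beta^k}{k!\,V^{k-2}} \leq e^\beta-1-\beta\qquad(V\geq 1) \, ,
\]
which, together with the vanishing of $\varphi$ outside $B\times C$, gives the clean $V$-independent estimate $\frac{e^\beta-1-\beta}{2\alpha}\|\rho\|_{\TV(B\times C)}$, uniform in $\psi$.

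Taking the supremum over $\psi$, dividing by $\beta$, and using the rescaled FIR inequality~\eqref{eq:rescaled-FIR} together with \eqref{ass:fixV-bound} to absorb $\scrI^V(\sigma,j)$ into a $V$-independent constant then produces
\[
\|J\|_{\TV(B\times C)} \leq \frac{c}{\beta} + \frac{1}{2\alpha}\|\omega\|_{\TV(B\times C)} + \frac{e^\beta-1-\beta}{2\beta\alpha}\|\rho\|_{\TV(B\times C)} \, .
\]
The remaining step is to invoke \eqref{eq:wTV-rhoTV} from Corollary~\ref{lem:w-rho-FIR} to bound $\|\omega\|_{\TV(B\times C)}$ by $c\,\|\rho\|_{\TV(B\times C)}^{1/2}$, then dominate the resulting $\frac{c}{2\alpha}\|\rho\|^{1/2}$ term by $\frac{c(e^\beta-1)}{2\beta\alpha}\|\rho\|^{1/2}$ using $(e^\beta-1)/\beta\geq 1$, and collapse the two $\rho$-contributions into the advertised $\max\{\|\rho\|^{1/2},\|\rho\|\}$. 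The main technical hurdle is engineering the exponential remainder to be simultaneously $V$-uniform and as small as the desired bound; this is precisely where the standing $V\geq 1$ assumption enters, and the need to handle both the linear and higher-order contributions of $\varphi$ is exactly what forces the two different powers of $\|\rho\|$ to appear in the final estimate.
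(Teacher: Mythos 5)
Your proof is correct. It follows the same overall strategy as the paper---test the Legendre-dual formulation of $\scrL^V$ against a function of the form $\beta\,\mathbb 1_B\,\mathbb 1_C\,\psi\,v/V$, use the $V\geq1$ assumption to get a $V$-uniform bound on the exponential term, and then invoke \eqref{eq:wTV-rhoTV} to convert $\|\omega\|_{\TV}$ to $\|\rho\|^{1/2}_{\TV}$---but with a genuinely different decomposition of the exponential. The paper writes $e^{\beta\psi v/V}-1 = v\sinh(\beta\psi/V) + \bigl(\cosh(\beta\psi/V)-1\bigr)$, exploiting the $v\in\{-1,1\}$ parity structure so that the odd part pairs exactly with $\omega$ and the even part with $\rho$; the $V$-uniformity then comes from the elementary monotonicity facts $V\sinh(\beta/V)\leq\sinh\beta$ and $V^2(\cosh(\beta/V)-1)\leq\cosh\beta-1$ for $V\geq1$, and the two contributions sum to $\sinh\beta+\cosh\beta-1 = e^\beta-1$. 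You instead split $e^\varphi-1 = \varphi + (e^\varphi-1-\varphi)$: the linear part is odd in $v$ and evaluates exactly to $\frac{\beta}{2\alpha}\int_B\int_C\psi\,d\omega$ (in fact with a slightly sharper constant than the paper's $\sinh\beta$), while the Taylor remainder is bounded crudely in absolute value by $e^\beta-1-\beta$ uniformly in $V\geq1$ and paired with the positive measure $\sigma_t$, yielding the $\rho$ contribution. Your split is a bit less surgical---it lumps the odd higher-order terms in with the even ones rather than pairing them with $\omega$---but this only costs a factor absorbed by the constant and by $(e^\beta-1)/\beta\geq1$. The only cosmetic gap is that you gesture at ``approximating indicators in the standard way'' where the paper uses the explicit heat-kernel mollification $\mathbb 1_C^\theta$; this matters because the duality requires $\varphi\in C(\Omega_1)$, and the passage $\theta\to0$ relies on the (implicitly used) fact that $j^2_t\ll\sigma_t\ll\calL_\T$, so that $\mathbb 1^\theta_C\to\mathbb 1_C$ pointwise $J_t$- and $\rho_t$-a.e.\ and dominated convergence applies. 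Worth making explicit, but not a flaw in the argument.
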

\begin{proof}
For any $C\in\mathcal B(\T)$, define $\mathbb 1^\theta_C\coloneqq\mathbb 1_C{\ast}M_\theta$, where $M_\theta$ is the heat kernel on $\T$ (cf.~\eqref{eq:heatker-T} for its definition). 
Choosing $\varphi(x,v)=\beta\psi(x)\mathbb 1^\theta_C(x)\frac{v}{V}$ for any $C\in\mathcal B(\T)$, $\psi\in C(\T)$ with $\lVert\psi\rVert_{L^\infty(\T)}\leq 1$ and $\beta>0$ in the rate functional~\eqref{eq:fin-Lag}, we find 
\begin{align*}
    \frac{1}{\beta}\Bigl\lvert&\int_{\Omega_1}\beta\,\mathbb{1}^\theta_C\,\psi\,\frac{v}{V}\,dj^{2}_t\Bigr\rvert 
    \leq \frac1\beta\scrL^V(\sigma_t,j_t) + \frac1\beta\Bigl\lvert\frac{V^2}{2\alpha}\int_{\Omega_1} \Bigl(e^{\beta\,\mathbb{1}^\theta_C\,\psi\,\frac{v}{V}}-1\Bigr)\,d\sigma_t\Bigr\rvert \\
    &\leq  \frac1\beta\scrL^V(\sigma_t,j_t) + \frac1\beta\Bigl\lvert\frac{V^2}{2\alpha}\int_{\Omega_1} \int_{\T} \Mol_\theta(x-y)\, \Bigl(e^{\beta\,\mathbb{1}_C(y)\,\psi(x)\frac{v}{V}}-1\Bigr)\,dy\,\sigma_t(dxdv)\Bigr\rvert \\
    &= \frac1\beta\scrL^V(\sigma_t,j_t) +\frac1\beta \Bigl\lvert\frac{V^2}{2\alpha}\int_{\Omega_1} \mathbb 1^\theta_C \, v \sinh\frac{\beta\,\psi}{V}\,d\sigma_t\Bigr\rvert + \frac1\beta\Bigl\lvert\frac{V^2}{2\alpha}\int_{\Omega_1} \mathbb 1^\theta_C\,\Bigl( \cosh\frac{\beta\,\psi}{V} - 1 \Bigr)\,d\sigma_t\Bigr\rvert\\
    &\leq \frac1\beta\scrL^V(\sigma_t,j_t) + \frac{V}{2\beta\alpha}\sinh\frac{\beta}{V}\Bigl\lvert\int_{\T}\mathbb 1^\theta_C\,d\omega_t\Bigr\rvert + \frac{V^2}{2\beta\alpha}\Bigl(\cosh\frac{\beta}{V} - 1 \Bigr)\Bigl\lvert\int_{\T} \mathbb 1^\theta_C\,d\rho_t\Bigr\rvert\\
    &\leq \frac{1}{\beta}\scrL^V(\sigma_t,j_t) + \frac{1}{2\beta\alpha}\sinh\beta\,\Bigl\lvert\int_{\T}\mathbb 1^\theta_C\,d\omega_t\Bigr\rvert + \frac{1}{2\beta\alpha}(\cosh\beta - 1)\,\Bigl\lvert\int_{\T} \mathbb 1^\theta_C\,d\rho_t\Bigr\rvert \, .
\end{align*}
Using the dominated convergence theorem, we pass $\theta\rightarrow 0$ in the inequality above to arrive at
\begin{equation*}
     \lVert J_t \rVert_{\TV(C)}
     \leq 
    \frac{1}{\beta}\scrL^V(\sigma_t,j_t) + \frac{1}{2\beta\alpha}\sinh\beta\,\lVert\omega_t\rVert_{\TV(C)} + \frac{1}{2\beta\alpha}(\cosh\beta - 1)\,\lVert\rho_t\rVert_{\TV(C)} \, .
\end{equation*}
Therefore, for any $B\in\mathcal B((0,T))$ along with ~\eqref{eq:wTV-rhoTV} and $\lVert\rho\rVert_{\TV(B\times C)}\leq T$, we have 
\begin{align*}
    \int_B\lVert J_t \rVert_{\TV(C)} \, dt &\leq \frac{1}{\beta}\int_B\scrL^V(\sigma_t,j_t)\, dt + \frac{1}{2\beta\alpha}\sinh\beta \, \lVert\omega\rVert_{\TV(B\times C)}+ \frac{1}{2\beta\alpha}(\cosh\beta - 1)\,\lVert\rho\rVert_{\TV(B\times C)} \\
    & \leq \frac{1}{\beta}\scrI^V(\sigma,j) + \frac{c}{2\beta\alpha}\sinh\beta \, \lVert\rho\rVert_{\TV(B\times C)}^{\frac12}  + \frac{1}{2\beta\alpha} (\cosh\beta-1)\, \lVert\rho\rVert_{\TV(B\times C)}  \\
    &\leq \frac{1}{\beta}\scrI^V(\sigma,j) + \frac{c}{2\beta\alpha}(e^{\beta}-1)\, \max\Bigl\{\lVert\rho\rVert_{\TV(B\times C)}^{\frac12},\lVert\rho\rVert_{\TV(B\times C)}\Bigr\} \, ,
\end{align*}
where $c$ is independent of $V\geq 1$. 
\end{proof}

\subsubsection{Compactness}
We now discuss the compactness properties of various objects involved as $V\rightarrow\infty$. Essentially, there are two levels of compactness, a weaker notion for $\omega^V$ and the derived flux~$J^V$, and a stronger notion for the density~$\rho^V$.

\begin{prop}\label{thm:compact}
Let a sequence $(\sigma^V,j^V) \in \CE(0,T;\Omega_1)$ satisfy
\begin{equation}\label{ass:anyV-bound}
\sup_{V\ge 1} \bigl\{\Ent(\sigma^V_0\mid\stat) + \scrI^V(\sigma^V,j^V)\bigr\}\leq C\quad\text{for some constant }\, C>0\,.
\end{equation}
Define $\rho^V,\omega^V,J^V \in \calM([0,T]\times\T)$ as
\begin{equation}\label{def:wV-rhoV}
\begin{gathered}
\rho^V(dtdx)\coloneqq\!\sum_{v\in\{-1,1\}}\!\sigma_t^V(dx,v)\, dt \, , \quad \omega^V(dtdx)\coloneqq V\!\sum_{v\in\{-1,1\}}\! v \, \sigma_t^V(dx,v) \, dt \, , \\ J^V(dtdx)\coloneqq\!\sum_{v\in\{-1,1\}}\! \frac{v}{V} \, j^{2,V}_t(dx,v) \, dt \, .
\end{gathered}
\end{equation}
Then, there exist subsequences (not relabeled) such that
\begin{enumerate}
\item\label{item:conv-strng-rho} 
$\rho^V\rightarrow \bar{\rho}$ in $C([0,T];\calP(\T))$ with respect to the narrow topology in space.
\item $\omega^V\rightarrow \bar \omega$ in $\calM([0,T]\times\T)$ with respect to the narrow topology.
\item $J^V\rightarrow \bar J$ in $\calM((0,T)\times\T)$ with respect to the narrow topology.
\end{enumerate}
Moreover, $\bar\rho_t\ll \calL_\T$ for every $t\in[0,T]$ and both $\bar\omega$ and $\bar J$ have densities in time, i.e., $\bar \omega(dtdx)=\bar \omega_t(dx)\,dt$, $\bar J(dtdx)=\bar J_t(dx)\,dt$, where $\bar\omega_t$ and $\bar{J}_t$ are defined via disintegration.
\end{prop}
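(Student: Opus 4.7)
The plan is to combine the three available a priori bounds---the rescaled FIR inequality of Theorem~\ref{thm:rescaled-FIR}, the TV estimate of Corollary~\ref{lem:w-rho-FIR}, and Lemma~\ref{lem:J-apriori}---with Arzelà--Ascoli for the density and standard narrow compactness for the two fluxes, and then close with lower-semicontinuity arguments for the absolute-continuity claims. First, feeding the uniform hypothesis \eqref{ass:anyV-bound} into Theorem~\ref{thm:rescaled-FIR} yields $\sup_V \sup_t \Ent(\sigma_t^V\mid\pi) \leq C$ together with $\sup_V \int_0^T \FI(\sigma_r^V\mid\pi)\,dr < \infty$, so Corollary~\ref{lem:w-rho-FIR} provides the set-valued bound $\|\omega^V\|_{\TV(B)} \leq c\,\|\rho^V\|_{\TV(B)}^{1/2}$ uniformly in $V$. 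Since $\rho_t^V \in \calP(\T)$, this specialises on product sets to $\|\omega^V\|_{\TV(E\times\T)} \leq c|E|^{1/2}$; in particular, $\|\omega^V\|_{\TV([0,T]\times\T)} \leq cT^{1/2}$. Lemma~\ref{lem:J-apriori} with, say, $\beta=1$ then gives the analogous uniform TV control on $J^V$.

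For part~(1) I would invoke Arzelà--Ascoli in $C([0,T];(\calP(\T),d_{\BL}))$. Pointwise relative compactness is immediate, since $\T$ compact implies $\calP(\T)$ narrowly compact. Projecting the continuity equation on $\Omega_1$ via the analogue of \eqref{CE_proj} yields $\partial_t\rho^V + \partial_x\omega^V = 0$ in the weak sense, hence for $\varphi \in C^1(\T)$ with $\max(\|\varphi\|_\infty,\|\partial_x\varphi\|_\infty) \leq 1$ and $0\leq s\leq t\leq T$,
\begin{equation*}
    \bigl|\langle\varphi,\rho_t^V-\rho_s^V\rangle\bigr| = \Bigl|\int_s^t\langle\partial_x\varphi,\omega_r^V\rangle\,dr\Bigr| \leq \|\omega^V\|_{\TV([s,t]\times\T)} \leq c|t-s|^{1/2}.
\end{equation*}
A standard mollification on the torus extends this to the full bounded-Lipschitz norm (up to a universal constant), giving uniform $1/2$-Hölder continuity of $t \mapsto \rho_t^V$ in $d_{\BL}$; Arzelà--Ascoli then produces a subsequence converging in $C([0,T];\calP(\T))$. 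For parts~(2) and~(3), the uniform TV bounds on the compact base spaces $[0,T]\times\T$ and $(0,T)\times\T$ make tightness automatic, and Banach--Alaoglu--Prokhorov yields narrowly convergent subsequences.

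The absolute continuity $\bar\rho_t \ll \calL_\T$ follows from the standard projection inequality $\Ent(\rho_t^V\mid\calL_\T) \leq \Ent(\sigma_t^V\mid\pi) \leq C$, the pointwise narrow convergence obtained in step~(1), and the narrow lower semicontinuity of $\Ent(\cdot\mid\calL_\T)$. For the time densities, applying $\|\omega^V\|_{\TV(U\times\T)} \leq c|U|^{1/2}$ on an open $U \subset (0,T)$ and using lower semicontinuity of total variation on open sets gives $|\bar\omega|(U\times\T) \leq c|U|^{1/2}$, which then extends to arbitrary Borel $E$ by outer regularity of Borel measures. The time-marginal of $|\bar\omega|$ is therefore absolutely continuous with respect to $\calL_{[0,T]}$, and the disintegration theorem supplies $\bar\omega(dtdx) = \bar\omega_t(dx)\,dt$. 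The identical argument, now based on Lemma~\ref{lem:J-apriori}, produces $\bar J(dtdx) = \bar J_t(dx)\,dt$.

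The most delicate step is the last disintegration: transferring the set-indexed TV estimate to the limit requires some care, because narrow convergence only provides lower semicontinuity of total variation on open sets, so one must route through outer regularity to cover arbitrary Borel sets. A secondary technicality is the upgrade of equicontinuity from smooth to bounded-Lipschitz test functions, which is routine on the torus via mollification but should be recorded. Everything else is a fairly direct assembly of the a priori bounds already at our disposal.
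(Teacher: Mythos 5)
Your proposal is correct and follows essentially the same route as the paper's proof: the rescaled FIR inequality and Corollary~\ref{lem:w-rho-FIR} furnish the set-indexed TV bound $\|\omega^V\|_{\TV(B)} \le c\|\rho^V\|_{\TV(B)}^{1/2}$, Lemma~\ref{lem:J-apriori} handles $J^V$, the continuity equation plus this bound gives a uniform $|t-s|^{1/2}$ modulus for $t\mapsto \rho_t^V$ in $d_{\BL}$ and hence Arzel\`a--Ascoli, and the absolute continuity $\bar\rho_t\ll\calL_\T$ comes from the projected entropy bound and narrow lower semicontinuity---all matching the paper. The one place where you diverge is the time-disintegration of $\bar\omega$ and $\bar J$: the paper invokes a measurable-selection/disintegration result of Valadier (cited as Corollary~A5), whereas you argue directly that narrow lower semicontinuity of the total variation on open sets plus outer regularity of Lebesgue measure transfers $|\omega^V|(E\times\T)\le c|E|^{1/2}$ to the limit, yielding absolute continuity of the time marginal and thence the disintegration $\bar\omega(dtdx)=\bar\omega_t(dx)\,dt$. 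Both arguments are valid; yours is self-contained and slightly more elementary, at the cost of some bookkeeping (extending from open to Borel sets, handling the Hahn decomposition for the signed measure), while the paper's citation avoids these details. A small imprecision to tidy: after the FIR inequality what you actually control uniformly is $\frac{V^2}{2\alpha}\int_0^T\FI(\sigma_r^V\mid\pi)\,dr$, not just $\int_0^T\FI\,dr$; this is the form Corollary~\ref{lem:w-rho-FIR} needs, so the conclusion is unaffected, but your displayed claim understates the bound.
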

\begin{proof}
The narrow convergence of $\omega^V\!(dtdx)=\omega_t^V\!(dx)\, dt$ is implied by~\eqref{eq:wTV-rhoTV}, which gives
\begin{equation*}
    \sup_{V\geq 1}\lVert\omega^V\rVert_{\TV((0,T)\times\T)} < c \, \lVert\rho^V\rVert^{\frac12}_{\TV((0,T)\times\T)} \leq c \, \sqrt{T} \, ,
\end{equation*}
where $c>0$ is independent of $V\geq 1$ and $\lVert\rho_t\rVert_{\TV(\T)}=1$. The narrow convergence of~$J^V$ follows from Lemma~\ref{lem:J-apriori} since, for any $\beta>0$,
\begin{equation*}
    \sup_{V\geq 1} \lVert J^V\rVert_{\TV((0,T)\times \T)}\leq  \frac{c}{\beta}+\frac{c}{2\beta\alpha}(e^\beta-1)\,\max\bigl\{\sqrt{T},T\bigr\} \, ,
\end{equation*}
where $c$ is independent of $V$. Using the same lemma and \cite[Page 181, Corollary~A5]{Valdier90}, it follows that there exists a measurable family $\bar{J}_t \in \calM(\T)$ such that $\bar{J}(dtdx) = \bar{J}_t(dx)\,dt$.

For every $t\in [0,T]$, the sequence $(\rho_t^V)_{V>0}\subset\calP(\T)$ is pre-compact with respect to the narrow topology. Moreover, since $\sigma^V\in C([0,T];\calP(\Omega_1))$, we have that $t\mapsto \rho^V_t\in C([0,T];\calP(\T))$. Therefore, to prove part~\eqref{item:conv-strng-rho}, we will make use of the Arzel\`a-Ascoli theorem to show that $\rho^V\rightarrow \rho$ in $C([0,T];\calP(\T))$ with respect to the uniform topology in time and narrow topology in space.
To prove equicontinuity of $\rho^V$ in $C([0,T];\calP(\T))$, we will show that
\begin{equation}\label{aux-BL}
    \sup_{V\ge 1}\sup_{t\in [0,T-h]} d_{\BL}(\rho^V_{t+h},\rho^V_t) \xrightarrow{h\rightarrow 0} 0 \, ,
\end{equation}
where $d_{\BL}$ is the bounded Lipschitz metric on the space of probability measures (it induces the narrow topology) and is given by
\begin{equation}\label{eq:BL-metric}
    d_{\BL}(\mu,\nu)\coloneqq\sup\bigl\{ \lvert\langle f,\mu\rangle - \langle f,\nu\rangle \rvert\,: f\in \BL(\T), \|f\|\leq 1 \bigr\} \, ,
\end{equation}
where $\BL(\T)=W^{1,\infty}(\T)$ is the space of bounded Lipschitz functions. Since $C^1(\T)$ (with $\|\cdot\|_{C^{1}(\T)}$) is dense in $W^{1,\infty}(\T)$,~\eqref{aux-BL} is equivalent to showing
\begin{equation}\label{aux-check}
    \sup_{V\ge 1}\sup_{t\in [0,T-h]}\sup_{\substack{\psi\in C^1(\T)\\ \lVert\psi\rVert_{C^1(\T)}\leq 1}} \int_{\T}\psi\,\bigl(d\rho^V_{t+h}-d\rho^V_{t}\bigr) \xrightarrow{h\rightarrow 0} 0 \, .
\end{equation}
Since $(\sigma^V,j^V)\in \CE(0,T;\Omega_1)$, making the choice $\varphi(x,v)=\psi(x)$ in the continuity equation and using $j_t^{1,V}=v\,V\,\sigma_t$, we find
\begin{equation*}
  \int_{\T} \psi \, d\rho^V_t - \int_{\T} \psi \, d\rho^V_s = \int_s^t\int_{\T} \partial_x\psi \, d\omega^{V}_r \, dr \, .
\end{equation*}
Repeating the arguments as in Corollary~\ref{lem:w-rho-FIR} with $B = [t, s] \times \T$, we have
\begin{equation*}
    \lVert\omega^V\rVert_{\TV([s, t]\times\T)} \leq c \, \Bigl(\int_s^t\lVert\rho^V_r\rVert_{\TV(\T)}\,dr\Bigr)^{\!\frac12} \leq c \, \sqrt{t-s} \, ,
\end{equation*}
where $c$ is independent of $V$. Note that, as in the case for $\bar J$, the previous estimate and \cite[Page 181, Corollary~A5]{Valdier90} provide a measurable family $\bar\omega_t\in\calM(\T)$ such that $\bar\omega(dtdx)=\bar\omega_t(dx)\,dt$. 

Using the variational formulation of the total-variation norm, for any $\psi$ with $\lVert\psi\rVert_{C^1(\T)}\leq 1$, we have the bound
\begin{equation*}
    \int_{\T}\psi \, \bigl(d\rho^V_{t+h}-d\rho^V_{t}\bigr) \leq \int_{t}^{t+h} \lVert\omega^V_r\rVert_{\TV(\T)} \, dr \leq c \, \sqrt{h} \, .
\end{equation*}
Equicontinuity follows since the right-hand side is independent of $t$ and $V$. Note that this estimate, in particular, implies a uniform $\frac12$-H\"older estimate with respect to the Wasserstein-1 distance. 

Therefore, by the Arzel\`a-Ascoli theorem, $\rho^V\rightarrow \bar{\rho}$ in $C([0,T];\calP(\T))$ with respect to the uniform topology in time and narrow topology in space and, consequently, we have the pointwise convergence $\rho^V_t\rightarrow\bar\rho_t$ in $\calP(\T)$ with respect to the narrow topology for any $t\in[0,T]$. Note that we have used $\bar\rho(dtdx)=\bar\rho_t(dx) \, dt$, which is true since $\rho^V(dtdx)=\rho^V_t(dx) \, dt$, $\rho^V(dtdx)\rightarrow \bar\rho(dtdx)$ and $\rho^V_t(dx)\rightarrow \bar\rho_t(dx)$ for any $t\in [0,T]$. By uniqueness of the limit and disintegration, this implies that $\bar\rho(dtdx)=\bar\rho_t(dx)\,dt$.

Finally, the fact that $\bar\rho_t\ll \calL_\T$ for all $t\in[0,T]$ follows from the induced FIR inequality on $\rho$ in \eqref{eq:FIR-rho-w} and on the narrow lower semicontinuity of the relative entropy.
\end{proof}

\subsubsection{Properties of the limit system}\label{sec:prop_lim}

Recall from Section~\ref{sec:Var} that the sequences $\rho^V$, $\omega^V$, and $J^V$, with a finite rate functional, satisfy the momentum system~\eqref{eq:weak-FC}. In the following two lemmas, we pass to the limit in these objects and show that the limiting pair $(\bar\rho,\bar\omega)$ satisfies a continuity equation and that $\bar J$ is the distributional derivative of $\bar\rho$.

\begin{lem}\label{lem:barOmega-ac}
Under the assumptions of Proposition~\ref{thm:compact}, let $\rho^V\rightarrow\bar\rho$, $\omega^V\rightarrow\bar\omega$ in $\calM((0,T)\times \T)$ with respect to the narrow topology, and $\rho^V_t\rightarrow\bar\rho_t$ in $\calP(\T)$ with respect to the narrow topology for every $t\in [0,T]$. Then, $\bar{\omega} \ll \bar{\rho}$ in $\calM((0,T)\times\T)$, $\bar{\omega}(dtdx) = \bar{\omega}_t(dx) \, dt$, and $\bar\omega_t\ll\bar\rho_t$ in $\calM(\T)$ for almost every $t\in [0,T]$. Furthermore, the pair $(\bar\rho,\bar\omega)\in \CE(0,T;\T)$, namely it solves
\begin{equation*}
    \partial_t\bar\rho+\partial_x\bar\omega=0 \, ,
\end{equation*}
in the sense that, for any $\psi\in C^1(\T)$ and $0\leq s\leq t\leq T$, we have
\begin{equation*}
    \int_\T\psi \, d\bar\rho_t - \int_\T\psi \, d\bar\rho_s = \int_s^t\int_{\T}\partial_x\psi \, d\bar\omega_r \, dr \, .
\end{equation*}
\end{lem}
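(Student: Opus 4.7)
My plan is to extract absolute continuity from a uniform Benamou--Brenier-type bound, then disintegrate in time, and finally pass to the limit in the weak continuity equation using the time-dependent test functions afforded by Lemma~\ref{lem:time-space-weak}.

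For the absolute continuity $\bar\omega\ll\bar\rho$, I would apply the functional $\calG_T$ from Corollary~\ref{lem:w-rho-FIR}, extended to all measure pairs by declaring $\calG_T(\rho,\omega)=+\infty$ whenever $\omega\not\ll\rho$. This extension is convex, positively $1$-homogeneous, and lower semicontinuous with respect to narrow convergence of pairs of measures on $(0,T)\times\T$ (the standard Benamou--Brenier action). Combining Corollary~\ref{lem:w-rho-FIR} with the rescaled FIR inequality of Theorem~\ref{thm:rescaled-FIR} and assumption~\eqref{ass:anyV-bound} yields the uniform bound
\begin{equation*}
    \calG_T(\rho^V,\omega^V)\;\le\;2\alpha\bigl(\Ent(\sigma^V_0\mid\pi)+\scrI^V(\sigma^V,j^V)\bigr)\;\le\;2\alpha C.
\end{equation*}
Passing to the limit along the subsequences supplied by Proposition~\ref{thm:compact} gives $\calG_T(\bar\rho,\bar\omega)<\infty$, whence $\bar\omega\ll\bar\rho$ in $\calM((0,T)\times\T)$ with density $g\coloneqq d\bar\omega/d\bar\rho\in L^2(\bar\rho)$. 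Combined with the factorization $\bar\rho(dt\,dx)=\bar\rho_t(dx)\,dt$ from Proposition~\ref{thm:compact}, setting $\bar\omega_t(dx)\coloneqq g(t,\cdot)\bar\rho_t(dx)$ and invoking Fubini yields $\bar\omega(dt\,dx)=\bar\omega_t(dx)\,dt$ together with $\bar\omega_t\ll\bar\rho_t$ for almost every $t\in(0,T)$.

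For the continuity equation, I would start from the weak form for $(\sigma^V,j^V)\in\CE(0,T;\Omega_1)$. Since $\scrI^V(\sigma^V,j^V)<\infty$ forces $j^{1,V}_r=Vv\,\sigma^V_r$, testing with $\varphi(x,v)=\psi(x)$ for $\psi\in C^1(\T)$ and applying Lemma~\ref{lem:time-space-weak} with $\chi\in C^1_c((0,T))$ yields
\begin{equation*}
    \int_0^T\dot\chi(t)\int_\T\psi\,d\rho^V_t\,dt = -\int_0^T\chi(t)\int_\T\partial_x\psi\,d\omega^V_t\,dt.
\end{equation*}
The left-hand side converges since $\rho^V\to\bar\rho$ uniformly in $C([0,T];\calP(\T))$, and the right-hand side converges since $\omega^V\to\bar\omega$ narrowly on $(0,T)\times\T$ against the bounded continuous function $(t,x)\mapsto\chi(t)\partial_x\psi(x)$. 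The resulting distributional identity $\partial_t\bar\rho+\partial_x\bar\omega=0$ upgrades to the stated pointwise integral identity for every $0\le s\le t\le T$ using that $t\mapsto\int_\T\psi\,d\bar\rho_t$ is continuous on $[0,T]$ (from $\bar\rho\in C([0,T];\calP(\T))$) and $t\mapsto\int_\T\partial_x\psi\,d\bar\omega_t\in L^1((0,T))$ (from $\int_0^T\|\bar\omega_t\|_{\TV}\,dt\le\|\bar\omega\|_{\TV((0,T)\times\T)}<\infty$), by a standard absolute-continuity argument.

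The main obstacle is the first step: narrow lower semicontinuity of the extended $\calG_T$ does the heavy lifting, delivering simultaneously absolute continuity and the integrability $d\bar\omega/d\bar\rho\in L^2(\bar\rho)$. Once this is in place, the disintegration is routine, and the passage to the limit in the continuity equation exploits Lemma~\ref{lem:time-space-weak} precisely as intended---sidestepping the lack of temporal regularity of $\omega^V$ by integrating against smooth time cutoffs rather than indicators of intervals.
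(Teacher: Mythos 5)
Your proposal follows the paper's argument essentially step for step: absolute continuity $\bar\omega\ll\bar\rho$ from narrow lower semicontinuity of the Benamou--Brenier functional $\calG_T$ together with the uniform FIR bound, disintegration in time from $\bar\rho(dt\,dx)=\bar\rho_t(dx)\,dt$, and passage to the limit in the projected continuity equation. (Two small remarks: the paper's $\calG_T$ is already defined to be $+\infty$ off the absolutely continuous set, so no extension is needed; and the lower-semicontinuity citation in the paper is \cite[Theorem~2.34]{AmbrosioFuscoPallara00}, which covers exactly the convex, $1$-homogeneous integrand you describe.) The only genuine deviation is in the final step: the paper passes $V\to\infty$ directly in the integral identity $\int_\T\psi\,d\rho^V_t-\int_\T\psi\,d\rho^V_s=\int_s^t\int_\T\partial_x\psi\,d\omega^V_r\,dr$, implicitly using that the limit $\bar\omega$ has Lebesgue density in time so that the time-slices $\{s\}\times\T,\{t\}\times\T$ are $\bar\omega$-null and the Portmanteau theorem applies to the indicator of $(s,t)$; you instead route through Lemma~\ref{lem:time-space-weak} with smooth time cutoffs $\chi\in C^1_c((0,T))$, which avoids any boundary-charge concern at the price of an extra upgrade step from the distributional to the pointwise-in-$(s,t)$ identity. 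Both are valid; your detour is slightly longer but a bit more robust, since it does not rely on first establishing the time-disintegration of $\bar\omega$ to justify testing against indicators.
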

\begin{proof}
Since $\rho^V\rightarrow\bar\rho$ and $\omega^V\rightarrow\bar{\omega}$ narrowly in $\calM((0,T)\times\T)$, using the lower-semicontinuity of $\calG_T$~\cite[Theorem 2.34]{AmbrosioFuscoPallara00}, we find
\begin{equation*}
    \calG_T(\bar\rho,\bar\omega)\leq \liminf_{V\rightarrow\infty}\calG_T(\rho^V,\omega^V) < \infty \, ,
\end{equation*}
where the second inequality follows from~\eqref{eq:FIR-rho-w} and \eqref{ass:anyV-bound}. Therefore, $\bar\omega\ll \bar\rho$ in $\calM((0,T)\times\T)$.
From Proposition~\ref{thm:compact}, we know that $\bar\rho(dtdx)=\bar\rho_t(dx)\,dt$. Hence, we conclude that $\bar{\omega}(dtdx) = \bar{\omega}_t(dx) \, dt$ and $\bar{\omega}_t \ll \bar{\rho}_t$ for almost every $t\in [0,T]$. 

Choosing $\varphi(x,v)=\psi(x)$ in the continuity equation and using $j^{1,V}=v\,V\,\sigma^V$, we find
\begin{equation*}
    \int_{\T} \psi \, d\rho^V_t - \int_{\T} \psi \, d\rho^V_s = \int_s^t\int_{\T} \partial_x\psi\, d\omega^V_r \, dr \, .
\end{equation*}
Passing $V\rightarrow\infty$, it follows that $(\bar\rho,\bar\omega)\in\CE(0,T;\T)$.
\end{proof}

In Lemma~\ref{lem:barOmega-ac}, we projected the continuity equation for~$\sigma$ to the corresponding continuity equation for the density~$\rho$ and studied the limit $V\to\infty$. In the next lemma, we perform an analogous operation and find a  continuity equation for the flux~$\omega$. In the limit, under the conditions of Proposition~\ref{thm:compact}, we prove that the flux~$\bar{J}$ is in a one-to-one correspondence with a distributional derivative of $\bar u$, where $\bar u$ is Lebesgue density of $\bar{\rho}$.
With a slight abuse of notation, we will often write $D\bar{\rho}$ as the 
distributional derivative of $\bar u$.

\begin{lem}\label{lem:rho-bv}
Under the assumptions of Proposition~\ref{thm:compact}, let $\rho^V\rightarrow\bar\rho$, $\omega^V\rightarrow\bar\omega$, and $J^V\rightarrow\bar J$ in $\calM((0,T)\times \T)$ with respect to the narrow topology, and $\rho^V_t \to \bar{\rho}_t$ for any $t \in [0, T]$ in $\calP(\T)$ with respect to the narrow topology. Then, for any $\psi\in C^1(\T)$, we have
\begin{equation*}
    \int_\T \bigl( \partial_x\psi \, d\bar\rho_t - 2\psi \, d\bar{J}_t\bigr)=0 \qquad\text{ almost every $t\in [0,T]$\,,}
\end{equation*}
i.e.\ $D\bar\rho_t=-2\bar J_t$ for almost every $t\in (0,T)$. 

In particular, the Lebesgue density $\bar u_t=d\bar\rho_t/d\calL_\T\in BV(\T)$ for almost every $t\in(0,T)$, where $BV(\T)$ denotes the space of functions of bounded variation in $\T$.
\end{lem}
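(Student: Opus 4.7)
The approach is to test the continuity equation for $(\sigma^V,j^V)\in\CE(0,T;\Omega_1)$ against $\varphi(x,v)=v\,\psi(x)$ with $\psi\in C^1(\T)$, extract the leading $V$-scaling, and pass to the distributional limit. First I would apply Lemma~\ref{lem:time-space-weak} with this $\varphi$ and a temporal cutoff $\chi\in C^1_c((0,T))$, computing $\dnabla\varphi=(v\,\partial_x\psi,-2v\,\psi)$. Since $\scrI^V(\sigma^V,j^V)<\infty$ forces $j^{1,V}_t=V\,v\,\sigma^V_t$ by \eqref{eq:fin-Lag-rescaled}, and since $v^2=1$ on $\Omega_1$, the definitions \eqref{def:wV-rhoV} give the identifications
\[
\int_{\Omega_1}\!\!v\,\psi\,d\sigma^V_t = \tfrac{1}{V}\omega^V_t(\psi), \quad \int_{\Omega_1}\!\!v\,\partial_x\psi\,dj^{1,V}_t = V\!\int_\T\partial_x\psi\,d\rho^V_t, \quad \int_{\Omega_1}\!\!v\,\psi\,dj^{2,V}_t = V\!\int_\T\psi\,dJ^V_t.
\]
Substituting and dividing by $V$, the Lemma~\ref{lem:time-space-weak} identity becomes
\[
\int_0^T \frac{\dot\chi(t)}{V^2}\,\omega^V_t(\psi)\,dt = -\int_0^T \chi(t)\Bigl[\int_\T\partial_x\psi\,d\rho^V_t - 2\int_\T\psi\,dJ^V_t\Bigr]dt.
\]

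Next I would pass $V\to\infty$. The left-hand side is bounded by $V^{-2}\|\dot\chi\|_\infty\|\psi\|_\infty\|\omega^V\|_{\TV((0,T)\times\T)}$, which vanishes thanks to the uniform bound $\|\omega^V\|_{\TV((0,T)\times\T)}\le c\sqrt{T}$ from Corollary~\ref{lem:w-rho-FIR}. The right-hand side converges because $(t,x)\mapsto\chi(t)\partial_x\psi(x)$ and $(t,x)\mapsto\chi(t)\psi(x)$ are bounded continuous with compact time-support, while $\rho^V\to\bar\rho$ and $J^V\to\bar J$ narrowly in $\calM((0,T)\times\T)$. Since $\chi\in C^1_c((0,T))$ is arbitrary, the fundamental lemma of the calculus of variations yields $\int_\T\partial_x\psi\,d\bar\rho_t = 2\int_\T\psi\,d\bar J_t$ for almost every $t$; choosing a countable family $\{\psi_n\}$ dense in $C^1(\T)$ selects a common null set, giving $D\bar\rho_t=-2\bar J_t$ as distributions on $\T$ for almost every $t\in(0,T)$.

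The $BV$ conclusion is then immediate: Proposition~\ref{thm:compact} ensures $\bar\rho_t=\bar u_t\,\calL_\T$ with $\bar u_t\in L^1(\T)$, and the disintegration $\bar J(dtdx)=\bar J_t(dx)\,dt$ shows that $\bar J_t\in\calM(\T)$ is a finite measure for almost every $t$, so $D\bar u_t=-2\bar J_t$ is a finite measure and hence $\bar u_t\in BV(\T)$. The only delicate point in the argument is the careful bookkeeping of the $V$-factors: the particular scaling in \eqref{def:wV-rhoV} together with the constraint $j^{1,V}=V\,v\,\sigma^V$ is what makes the time-derivative term sit at order $V^{-2}$ while the spatial balance stays at order one; everything else is a direct application of tools already developed (the FIR-based TV bound on $\omega^V$, the narrow convergences from Proposition~\ref{thm:compact}, and Lemma~\ref{lem:time-space-weak}).
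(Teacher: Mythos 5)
Your proof follows the paper's argument exactly: test the continuity equation via Lemma~\ref{lem:time-space-weak} with $\varphi$ proportional to $v\,\psi(x)$, use $j^{1,V}=Vv\,\sigma^V$ and the definitions in \eqref{def:wV-rhoV} to identify the moments, observe the $V^{-2}$ prefactor on the time-derivative term, pass to the narrow limit, and conclude $BV$ regularity from the fact that $\bar J_t$ is a finite Radon measure. Your only additions — spelling out why the $V^{-2}$ term vanishes via the uniform TV bound from Corollary~\ref{lem:w-rho-FIR}, and using a countable dense family of test functions to obtain a $\psi$-independent null set — are small but genuine clarifications of details the paper leaves implicit.
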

\begin{proof}
From Lemma~\ref{lem:time-space-weak} (which holds unchanged in the rescaled situation with $V=1$), we know that, for any $\chi\in C^1_c((0,T))$ and $\varphi\in C^{1,0}(\Omega_1)$, we have
\begin{equation*}
    \int_0^T\int_{\Omega_1}\dot\chi(t) \, \varphi(x,v) \, \sigma^V_t(dxdv) \, dt = -\int_0^T\int_{\Omega_1}\chi(t) \, \dnabla\varphi(x,v)\cdot j^V_t(dxdv) \, dt \, .
\end{equation*}
Using $\varphi(x,v)=-\psi(x) \frac{v}{V}$, $j^{1,V}_t=v \, V \, \sigma^V_t$, and $J^V(dtdx)\coloneqq\sum\limits_v\frac{v}{V}\,j^{2,V}_t(dx,v)\,dt$, we find
\begin{align*}
    -\frac{1}{V^2}\int_0^T \dot\chi(t) \int_{\T} \psi(x) \, \omega^V_t(dx) \, dt = \int_0^T\chi(t)\int_{\T}\Bigl(\partial_x\psi(x) \, \rho^V_t(dx) - 2 \psi(x) \, J^V_t(dx)\Bigr) \, dt \, .
\end{align*}
Passing $V\rightarrow\infty$, we obtain
\begin{equation*}
    \int_0^T\chi(t)\int_{\T}\bigl( \partial_x\psi(x)\,\bar\rho_t(dx) - 2\psi(x)\,\bar J_t(dx)\bigr) \, dt=0 \qquad \forall \chi\in C^1_c((0,T)) \, ,
\end{equation*}
where we have used Proposition~\ref{thm:compact}. Therefore, for almost every $t\in (0,T)$, we have
\begin{equation*}
    \int_{\T} \bigl(\partial_x\psi \, d\bar\rho_t - 2 \psi \, d\bar{J}_t\bigr) = 0 \qquad \forall\psi\in C^{1}(\T) \, .
\end{equation*}
Since $\bar\rho_t = \bar u_t\calL_\T$ and $\bar J_t$ is a finite Radon measure, the previous equality implies $\bar u_t\in BV(\T)$ for almost every $t\in(0,T)$ (cf.~\cite[Definition 3.1]{AmbrosioFuscoPallara00}).
\end{proof}

\subsubsection{Liminf inequality}
We now prove the liminf inequality, which is the final step of the variational technique. As a special case, this inequality implies that that the sequence of solutions to the FC system, which correspond to minimizers of $\scrI^V$, will converge to the minimizers of the limiting functional.

Define the (limiting) functional $\bar{\scrJ} \colon C([0,T];\calP(\T))\times \calM((0,T)\times\T)\rightarrow [0,\infty]$ by
\begin{equation}\label{def:LimFun-diff}
\bar{\scrJ}(\rho, \omega) =
\begin{dcases*}
    \frac{1}{4\alpha} \int_0^T \int_{\T} \Bigl\lvert \alpha\frac{dD\rho}{d\rho}(t, x) + \frac{d\omega}{d\rho}(t, x) \Bigr\rvert^2 \rho_t(dx) \, dt & if $\omega \ll \rho$, $D\rho\ll \rho$,\\[-8pt]
    & $(\rho, \omega) \in \CE(0, T; \T)$, \\
    + \infty & otherwise,
\end{dcases*}
\end{equation}
where $\CE(0, T; \T)$ is defined in Lemma~\ref{lem:rho-bv}. The minimizers of this functional satisfy the limiting projected continuity equations together with the identity $\omega = - \alpha D\rho$ $\rho$-almost everywhere. 
Combining all these relations, we have, for every $\psi \in C^2(\T)$,
\begin{equation}
    \int_{\T} \psi \, d\rho_t - \int_{\T} \psi \, d\rho_s = \alpha \int_0^T \int_{\T} \partial_x^2 \psi \, d\rho_r \, dr \, ,
\end{equation}
which is the weak form of the diffusion equation
\begin{equation}
    \partial_t \rho = \alpha \, \partial_x^2 \rho \, .
\end{equation}

Although seemingly different at first sight, the limiting variational formulation~\eqref{def:LimFun-diff} is closely connected to the widely known Wasserstein gradient-flow structure~\cite{Otto01,AmbrosioGigliSavare08,MPR14} of the diffusion equation, as we sketch in Remark~\ref{rem:Wasserstein}.


\begin{theorem}[Liminf inequality]\label{thm:liminf}
Under the same conditions as in Proposition~\ref{thm:compact}, let $\rho^V\rightarrow\bar\rho$, $\omega^V\rightarrow\bar\omega$, and $J^V\rightarrow\bar J$ in $\calM((0,T)\times \T)$ with respect to the narrow topology, and $\rho^V_t\rightarrow\bar\rho_t$ in $\calP(\T)$ with respect to the narrow topology for every $t\in [0,T]$. Then,
\begin{equation*}
\liminf_{V\rightarrow\infty} \scrI^V(\sigma^V,j^V) \geq \bar{\scrJ}(\bar\rho,\bar\omega) \, .
\end{equation*}
\end{theorem}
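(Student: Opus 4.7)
The plan is to use the dual formulation~\eqref{eq:fin-Lag-rescaled} of the Lagrangian $\scrL^V$ with a test function of order $1/V$, so that the prefactor $V^2/(2\alpha)$ in front of $e^\varphi-1$ balances out, and then take the supremum over test functions to recover $\bar{\scrJ}$ by convex duality. Concretely, for every $\psi\in C^1([0,T]\times\T)$, I would plug $\varphi^V(t,x,v)=(v/V)\psi(t,x)$ into~\eqref{eq:fin-Lag-rescaled}. Since $v\in\{-1,1\}$, the exact identity $e^{v\psi/V}=\cosh(\psi/V)+v\sinh(\psi/V)$, together with the definitions~\eqref{def:wV-rhoV} of $\rho^V_t$, $\omega^V_t$ and $J^V_t$, yields the pointwise-in-$t$ lower bound
\begin{align*}
  \scrI^V(\sigma^V,j^V) \geq \int_0^T\!\!\int_\T \psi\,dJ^V_t\,dt - \int_0^T\!\!\int_\T \tfrac{V^2}{2\alpha}\bigl(\cosh(\psi/V)-1\bigr)\,d\rho^V_t\,dt - \int_0^T\!\!\int_\T \tfrac{V}{2\alpha}\sinh(\psi/V)\,d\omega^V_t\,dt.
\end{align*}

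Next I would pass to the limit $V\to\infty$. Since $\psi$ is bounded on $[0,T]\times\T$, the coefficients $\tfrac{V^2}{2\alpha}(\cosh(\psi/V)-1)$ and $\tfrac{V}{2\alpha}\sinh(\psi/V)$ converge uniformly to $\psi^2/(4\alpha)$ and $\psi/(2\alpha)$, respectively. Combined with the narrow convergences $\rho^V\to\bar\rho$, $\omega^V\to\bar\omega$, $J^V\to\bar J$ from Proposition~\ref{thm:compact}, each of the three terms converges to its natural limit. Then invoking Lemma~\ref{lem:rho-bv}, which in integrated form gives $\int\psi\,d\bar J_t=-\tfrac12\int\psi\,dD\bar\rho_t$ for almost every $t$, and replacing $\psi$ by $-\psi$, I obtain the bound
\begin{align*}
  \liminf_{V\to\infty}\scrI^V(\sigma^V,j^V) \geq \frac{1}{2\alpha}\int_0^T\!\!\int_\T \psi\,d\bigl(\alpha D\bar\rho_t+\bar\omega_t\bigr)\,dt - \frac{1}{4\alpha}\int_0^T\!\!\int_\T \psi^2\,d\bar\rho_t\,dt,
\end{align*}
valid for every $\psi\in C^1([0,T]\times\T)$.

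It remains to take the supremum over $\psi$. Setting $\mu\coloneqq \alpha D\bar\rho+\bar\omega$ as a measure on $(0,T)\times\T$, the right-hand side is an affine-minus-quadratic functional in $\psi$; maximised formally at $\psi^\star = d\mu/d(\bar\rho_t\,dt)$, its supremum equals $\tfrac{1}{4\alpha}\int_0^T\int_\T |d\mu/d\bar\rho_t|^2\,d\bar\rho_t\,dt$ when $\mu\ll\bar\rho_t\,dt$, and $+\infty$ otherwise. Combined with the continuity-equation constraint $(\bar\rho,\bar\omega)\in\CE(0,T;\T)$ already furnished by Lemma~\ref{lem:barOmega-ac}, this matches $\bar{\scrJ}(\bar\rho,\bar\omega)$ exactly. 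The main delicate point is precisely this last supremum step: one has to verify that smooth $C^1$ test functions are dense enough to realize the time-disintegrated $L^2(\bar\rho_t)$-duality, and that any singular part of $\mu$ relative to $\bar\rho_t\,dt$ can be exposed by a sequence of $\psi$ concentrating on that singular set, forcing the bound to $+\infty$. The assumed uniform bound $\sup_V\scrI^V<\infty$ then prevents this from occurring, so that the absolute continuities $D\bar\rho\ll\bar\rho$ and $\bar\omega\ll\bar\rho$ emerge as by-products of the liminf argument.
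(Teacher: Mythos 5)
Your proposal follows essentially the same path as the paper's proof: test against $\varphi(t,x,v)=(v/V)\psi(t,x)$, pass to the limit in the duality bound, invoke Lemma~\ref{lem:rho-bv} to replace $\bar J_t$ by $-\tfrac12 D\bar\rho_t$, and conclude by Legendre duality. Two small things you do slightly differently are in fact improvements in precision. First, you keep the exact identity $e^{v\psi/V}=\cosh(\psi/V)+v\sinh(\psi/V)$ (valid since $v\in\{-1,1\}$) rather than a Taylor bound with an $O(1/V)$ remainder, which makes the passage $V\to\infty$ entirely elementary. Second, you take $\psi\in C^1([0,T]\times\T)$ time-dependent from the start. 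The paper writes $\psi\in C(\T)$, but the supremum over time-independent $\psi$ inside the time integral would yield only $\sup_\psi\int_0^T(\cdots)\,dt$, which, by Jensen, is in general strictly weaker than $\int_0^T\sup_\psi(\cdots)\,dt=\bar{\scrJ}(\bar\rho,\bar\omega)$; since the Lagrangian representation $\scrI^V=\int_0^T\scrL^V(\sigma_r,j_r)\,dr$ allows one to insert a different $\varphi_r$ at each time, time-dependent test functions are indeed the right choice and your formulation is the cleaner one.

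The one place where your writeup is incomplete is exactly the place you flag yourself: the step establishing that the measure $\mu=\alpha D\bar\rho+\bar\omega$ is absolutely continuous with respect to $\bar\rho_t\,dt$, so that the Legendre supremum can be evaluated as the quadratic functional. The paper handles this explicitly by a Hahn-decomposition argument: assuming $|\bar J_t|$ charges a $\bar\rho_t$-null set $E$, one tests against $\psi_k = k(\mathbb 1_{E\cap P^+}-\mathbb 1_{E\cap P^-})$ (with $P^\pm$ the supports of the positive/negative parts of $\bar J_t$) after smoothing, and lets $k\to\infty$ to contradict the finite liminf; the case of $\bar\omega$ is already settled in Lemma~\ref{lem:barOmega-ac} via lower semicontinuity of $\calG_T$ together with the FIR bound, so the argument really only needs to expose a putative singular part of $\bar J$. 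Your sketch of "a sequence of $\psi$ concentrating on the singular set" is the right idea; to be a complete proof it should be carried out along the lines just described, including the smoothing to get admissible $\psi$ and the observation that the quadratic term $-\tfrac{1}{4\alpha}\int\psi^2\,d\bar\rho$ remains zero on a $\bar\rho$-null set.
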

\begin{proof}
Choosing $\varphi(x,v)=\psi(x) \frac{v}{V}$ in~\eqref{eq:rescaled-RF}, we find
\begin{equation}\label{eq:limit-aux1}
\scrI^V(\sigma^V,j^V) \geq \int_0^T\int_{\Omega_1} \Bigl( \psi(x) \, \frac{v}{V} \, j^{2,V}_t(dxdv) - \frac{V^2}{2\alpha} \bigl( e^{\psi(x)\frac{v}{V}}-1\bigr) \, \sigma^V_t(dxdv) \Bigr) \, dt \, .
\end{equation}
For almost every $t\in[0,T]$, we have
\begin{align*}
    V^2 \int_{\Omega_1} \bigl( e^{\psi(x) \frac{v}{V}}-1\bigr) \, \sigma^V_t(dxdv) &\leq \int_{\Omega_1} \Bigl[ V \, v \, \psi(x) + \frac12 \psi(x)^2 \Bigr] \, \sigma^V_t(dxdv) + O\Bigl(\frac1V\Bigr) \\
    &= \int_{\T} \Bigl[ \psi(x) \, \omega^V_t(dx) + \frac12 \psi(x)^2 \, \rho^V_t(dx) \Bigr] + O\Bigl(\frac1V\Bigr) \, ,
\end{align*}
where the inequality follows since $\psi\in L^\infty(\T)$.

Substituting back into~\eqref{eq:limit-aux1}, we arrive at
\begin{equation*}
\scrI^V(\sigma^V,j^V) \geq \int_0^T \biggl( \int_{\T} \psi(x) \, \Bigl( J^V_t(dx) - \frac{1}{2\alpha}\omega^V_t(dx)\Bigr) - \frac{1}{4\alpha}\int_{\T} \psi(x)^2 \rho_t^V(dx)\biggr) \, dr  + O\Bigl(\frac{1}{V}\Bigr) \, .
\end{equation*}
Passing $V\rightarrow\infty$, using Lemma~\ref{lem:barOmega-ac} and Proposition~\ref{thm:compact}, we obtain \begin{align}\label{thm:liminf_inequality}
\begin{aligned}
    \liminf_{V\rightarrow\infty} \scrI^V(\sigma^V,j^V) &\geq \int_0^T\int_\T \Bigl[ \psi(x) \, \Bigl( \bar J(dtdx) -\frac{1}{2\alpha}\bar\omega(dtdx)\Bigr) - \frac{1}{4\alpha} \psi(x)^2 \, \bar\rho(dtdx)\Bigr] \\
    &=\frac{1}{2\alpha}\int_0^T\int_\T \Bigl( \psi(x) \, \bigl( 2\alpha d\bar J_t - d\bar\omega_t\bigr) - \frac{1}{2} \psi(x)^2\,d\bar\rho_t \Bigr) \,  dt \, .
    \end{aligned}
\end{align}
Since the left-hand side is finite, we now claim that $\bar J_t\ll \bar\rho_t$ for almost every $t\in(0,T)$. Indeed, should this not be the case, for a fixed Lebesgue point $t\in(0,T)$, we find a pre-compact set $E\subset \T$ with $\bar\rho_t(E)=0$ and $|\bar J_t|(E)>0$. By the Hahn decomposition theorem, $J_t= J_t^+ - J_t^-$, where $J_t^{\pm}$ are nonnegative measures that are mutually singular. Denoting the supports of $J_t^{\pm}$ by $P^\pm$ respectively, and considering the function $\psi_k = k \, (1_{E\cap P^+} - 1_{E\cap P^-})$, $k\ge 1$, from \eqref{thm:liminf_inequality}, via a smoothing argument, we obtain
\[
   \infty> \liminf_{V\rightarrow\infty} \scrI^V(\sigma^V,j^V)\ge  2\alpha k\int_0^T|\bar J_t|(E)\,dt\qquad\text{for all $k\ge 1$}\,,
\]
where we used $\bar\omega_t\ll \bar\rho_t$ (cf.\ Lemma~\ref{lem:barOmega-ac}).
Sending $k\to\infty$, we arrive at a contradiction, thus implying $\bar J_t \ll \bar\rho_t$. Hence, \eqref{thm:liminf_inequality} leads to
\[
    \liminf_{V\rightarrow\infty} \scrI^V(\sigma^V,j^V) \ge \frac{1}{2\alpha}\int_0^T\int_\T \Bigl[ \psi(x) \, \Bigl( 2\alpha \frac{d\bar J_t}{d\bar\rho_t} - \frac{d\bar\omega_t}{d\bar\rho_t}\Bigr) - \frac{1}{2} \psi(x)^2\Bigr] \, d\bar\rho_t  \, dt \, .
\]
Using $D\bar\rho_t=-2J_t$ from Lemma~\ref{lem:rho-bv} and taking the supremum over $\psi\in C(\T)$, we arrive at the required result by Legendre duality.
\end{proof}

\begin{rem}\label{rem:Wasserstein}
The variational structure~\eqref{def:LimFun-diff} resembles the ``density-flux'' version of the well-known Wasserstein gradient-flow structure for diffusion.
To see this, note that since the limiting $\rho\ll \mathcal L_{\T}$ and $D\rho\ll \rho$, it follows that $\rho\in W^{1,1}(\T)$ and therefore we can write $D\rho/d\rho=\partial_x\rho/\rho$. Expanding the square in~\eqref{def:LimFun-diff} yields
\begin{align*}
    2 \bar{\scrJ}(\rho, \omega) &= \frac{\alpha}{2} \int_0^T\int_{\T}|\partial_x \log\rho_t|^2 \rho_t \, dt + \frac{1}{2\alpha}  \int_0^T\int_{\T} \Bigl|\frac{d\omega_t}{d\rho_t}\Bigr|^2\rho_t\, dt + \int_0^T\int_{\T} \partial_x(\log\rho_t)\, \omega_t\, dt \\
    &= \frac{\alpha}{2} \int_0^T\int_{\T}|\partial_x \log\rho_t|^2 \rho_t \, dt + \frac{1}{2\alpha}  \int_0^T\int_{\T} \Bigl|\frac{d\omega_t}{d\rho_t}\Bigr|^2\rho_t\, dt + \Ent(\rho_T\mid\mathcal L_{\T}) -  \Ent(\rho_0\mid\mathcal L_{\T}) \, ,
\end{align*}
where the second equality follows from integration by parts in the final integral and using the continuity equation $\partial_t\rho = -\partial_x \omega$. The right-hand side of the second equality is exactly the Wasserstein ($\Psi-\Psi^*$) formulation of the diffusion equation~\cite{AmbrosioGigliSavare08,MPR14} where the first term is the Fisher information (or quadratic dual dissipation potential), the second term is the metric derivative in the Wasserstein distance, and the final two terms are the entropy difference.   
\end{rem}

\subsection{Hyperbolic limit $\lambda \to 0$}\label{subsec:hyperbolic}
We now intend to study the hyperbolic limit wherein the switching rate~$\lambda\rightarrow 0$, while the speed~$V$ is kept constant in the Kac equation~\eqref{eq:strong-form}. This limit does not require any rescaling and therefore we directly use the rate functional~\eqref{eq:LDRF}. Since the proof strategy is similar to the diffusive limit, here we only outline the proofs.

\begin{prop}[FIR \& Compactness]\label{prop:lambda_compact}
Let a sequence $(\sigma^\lambda,j^\lambda) \in \CE(0,T;\Omega_V)$ satisfy, for a constant $C>0$, the estimate 
\begin{equation*}
\sup_{\lambda>0}\bigl\{\Ent(\sigma^\lambda_0\mid\stat) + \scrI^\lambda(\sigma^\lambda,j^\lambda)\bigr\}\leq C \, .
\end{equation*}
Define $\rho^\lambda, \omega^\lambda, J^\lambda \in \calM([0,T]\times\T)$ as
\begin{equation*}
\begin{gathered}
\rho^\lambda(dtdx)\coloneqq \! \sum\limits_{v \in \{-V, V\}} \! \sigma_t^\lambda(dx, v) \, dt \, , \quad \omega^\lambda(dtdx) \coloneqq \! \sum\limits_{v \in \{-V, V\}} \! v \, \sigma_t^\lambda(dx, v)\, dt \, , \\
J^\lambda(dtdx) \coloneqq \! \sum\limits_{v \in \{-V, V\}} \! v \, j^2(dx, v) \, dt \, .
\end{gathered}
\end{equation*}
For any $t\in[0,T]$, we have the inequalities
\begin{equation}\label{eq:Lambda-FIR}
     \Ent(\rho^\lambda_t\mid\mathcal L_{\T}) + \frac{\lambda}{V^2}\calG_t(\rho^\lambda,\omega^\lambda) \leq \Ent(\sigma^\lambda_t\mid\stat) + \lambda \int_0^t \FI(\sigma^\lambda_r\mid\stat) \, dr \leq \Ent(\sigma^\lambda_0\mid\stat) + \scrI^\lambda(\sigma^\lambda,j^\lambda) \, .
\end{equation}
Furthermore, there exist subsequences (not relabeled) such that
\begin{enumerate}
\item $\rho^\lambda\rightarrow \bar{\rho}$ in $C([0,T];\calP(\T))$ with respect to the narrow topology in space. 

\item $\omega^\lambda\rightarrow \bar{\omega}$ in $C([0,T];\calM(\T))$ with respect to the narrow topology in space.

\item $J^\lambda\rightarrow \bar J$ in $\calM((0,T)\times\T)$ with respect to the narrow topology and $\bar J(dtdx)=\bar J_t(dx) \, dt$,  where $\bar{J}_t$ is defined via disintegration. 

\end{enumerate}
The limit $\bar\omega\in AC([0,T];\calM(\Omega_V))$, where $\calM(\Omega_V)$ is endowed with the bounded-Lipschitz metric, and, for any $\psi\in C^1(\T)$ and $0\leq s<t\leq T$, satisfies 
\begin{equation}\label{eq:bar-w-CE}
    \int_{\T} \psi (d\bar\omega_t-d\bar\omega_s) = \int_s^t\int_{\T} \bigl( V^2\partial_x\psi\, d\bar\rho_r - 2\psi \, d\bar J_r\bigr) \, dr \, .
\end{equation}
In particular, $t\mapsto \bar\omega_t$ is differentiable almost everywhere with the time-derivative given by 
\begin{equation*}
    \partial_t \bar\omega_t  = -V^2D\bar\rho_t - 2\bar J_t \, ,
\end{equation*}
where $D\bar\rho_t$ is the distributional derivative of the distribution $\psi\mapsto \int_\T \psi d\bar\rho_t$.
\end{prop}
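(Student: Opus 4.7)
The proof closely mirrors the diffusive case, and I organize it in three stages: establishing the FIR inequality, extracting compact subsequences, and passing to the limit in the momentum equation. The two inequalities in \eqref{eq:Lambda-FIR} are almost immediate: the second follows from Theorem~\ref{thm:FIR} applied verbatim to $(\sigma^\lambda, j^\lambda)$ (since $\scrI^\lambda$ is just the rate functional~\eqref{eq:LDRF}), while the first reproduces the argument of Corollary~\ref{lem:w-rho-FIR}, combining the data-processing bound $\Ent(\rho^\lambda_t \mid \calL_\T) \leq \Ent(\sigma^\lambda_t \mid \stat)$ with the pointwise Jensen estimate that controls $\lvert d\omega^\lambda_t/d\rho^\lambda_t\rvert^2 \rho^\lambda_t$ by $V^2 \bigl( \sqrt{\varsigma^\lambda_t(x,V)} - \sqrt{\varsigma^\lambda_t(x,-V)}\bigr)^2 dx$, with prefactor $\lambda/V^2$ in place of $1/(2\alpha)$. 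Compactness of $\rho^\lambda$ in $C([0,T];\calP(\T))$ is then Arzelà--Ascoli: pointwise pre-compactness is automatic on $\calP(\T)$, while the trivial bound $\|\omega^\lambda_r\|_{\TV} \leq V$ (from $|\omega^\lambda_r| \leq V\rho^\lambda_r$) and the projected continuity equation yield the modulus $d_{\BL}(\rho^\lambda_t, \rho^\lambda_s) \leq V(t-s)$.

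For $J^\lambda$, an adaptation of Lemma~\ref{lem:J-apriori} with test function $\varphi(x,v) = \beta\,\psi(x)\,\mathbb 1^\theta_C(x)\,v/V$ in the dual form of $\scrL^\lambda$, exploiting the $\cosh/\sinh$ decomposition of $e^{\beta\psi v/V}$ over $v\in\{-V,V\}$, gives after $\theta \to 0$ the estimate
\[
    \|J^\lambda\|_{\TV(B\times C)} \leq \frac{V}{\beta}\,\scrI^\lambda(\sigma^\lambda, j^\lambda) + \frac{V\lambda(\cosh\beta - 1)}{\beta}\,\|\rho^\lambda\|_{\TV(B\times C)} + \frac{\lambda\sinh\beta}{\beta}\,\|\omega^\lambda\|_{\TV(B\times C)} \, ,
\]
which is uniform in $\lambda$ and produces narrow pre-compactness in $\calM((0,T)\times \T)$ together with the disintegration $\bar J(dtdx) = \bar J_t(dx)\,dt$ via \cite[Corollary~A5]{Valdier90}. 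To identify the limit dynamics of $\omega^\lambda$, I apply Lemma~\ref{lem:time-space-weak} with $\varphi(x,v) = \psi(x)\,v$, which (using $j^{1,\lambda} = v\sigma^\lambda$ and $v^2 = V^2$) yields the momentum identity
\[
    \int_\T \psi\,(d\omega^\lambda_t - d\omega^\lambda_s) = \int_s^t \int_\T \bigl( V^2\,\partial_x\psi\,d\rho^\lambda_r - 2\psi\,dJ^\lambda_r \bigr)\,dr
\]
for every $\psi\in C^1(\T)$. Combined with $\|\omega^\lambda_t\|_{\TV}\leq V$ (pointwise pre-compactness), this identity gives the required equicontinuity of $\omega^\lambda$ in $d_{\BL}$ once we have equi-integrability in $r$ of $\|J^\lambda_r\|_{\TV}$ uniformly in $\lambda$, whence compactness in $C([0,T];\calM(\T))$. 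Passing $\lambda\to 0$ yields \eqref{eq:bar-w-CE}; the bound $d_{\BL}(\bar\omega_t,\bar\omega_s) \leq V^2(t-s) + 2\int_s^t \|\bar J_r\|_{\TV}\,dr$ together with $r\mapsto \|\bar J_r\|_{\TV}\in L^1((0,T))$ deliver $\bar\omega \in \AC([0,T];(\calM(\T), d_{\BL}))$; finally, the distributional identity $\partial_t\bar\omega_t = -V^2 D\bar\rho_t - 2\bar J_t$ and the $BV$-regularity $\bar u_t \in BV(\T)$ for almost every~$t$ follow by repeating the argument of Lemma~\ref{lem:rho-bv} with $\varphi(x,v) = -\psi(x)\,v$ in Lemma~\ref{lem:time-space-weak}.

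The hardest part, and the one I expect to be the main obstacle, is the equi-integrability in $r$ of $\|J^\lambda_r\|_{\TV}$ uniformly in $\lambda$, on which the compactness of $\omega^\lambda$ in $C([0,T];\calM(\T))$ hinges: a uniform $L^1((0,T))$ bound alone does not suffice. My plan is to exploit the superlinear growth of the relative entropy: Jensen's inequality yields
\[
    \|j^{2,\lambda}_r\|_{\TV}\,\log\frac{\|j^{2,\lambda}_r\|_{\TV}}{\lambda} \leq \Ent(j^{2,\lambda}_r \mid \lambda\,\sigma^\lambda_r) + \|j^{2,\lambda}_r\|_{\TV} - \lambda \, ,
\]
and together with $\|J^\lambda_r\|_{\TV} \leq V\,\|j^{2,\lambda}_r\|_{\TV}$ and the smallness of $\lambda$, this produces a superlinear control of $\|J^\lambda_r\|_{\TV}$ by the Lagrangian density $\scrL^\lambda_r$; a de la Vallée-Poussin argument then supplies the required equi-integrability and closes the compactness step.
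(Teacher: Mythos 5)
Your proposal is correct, and its skeleton (FIR, Arzel\`a--Ascoli on $\rho^\lambda$ and $\omega^\lambda$, narrow compactness of $J^\lambda$, passage to the limit via $\varphi(x,v)=v\,\psi(x)$ in Lemma~\ref{lem:time-space-weak}) matches the paper's. The one genuinely different step---and the one you correctly identify as the crux---is the equicontinuity of $\omega^\lambda$. The paper does not isolate equi-integrability of $r\mapsto\|J^\lambda_r\|_{\TV}$: it instead inserts the Donsker--Varadhan bound with test function $-2\beta\,v\,\psi$ directly into the increment $\int_t^{t+h}\!\int v\psi\,dj^{2,\lambda}_r\,dr$, obtaining
\[
\Bigl\lvert\int_\T\psi\,(d\omega^\lambda_{t+h}-d\omega^\lambda_t)\Bigr\rvert
\;\le\; V^2 h \;+\; \frac{C}{\beta} \;+\; \frac{\lambda}{\beta}\bigl(e^{2\beta V}-1\bigr)h\,,
\]
and then sending $h\to 0$ followed by $\beta\to\infty$ (the paper says "choose $\beta$ sufficiently small'', which is a slip---the $C/\beta$ term forces $\beta\to\infty$, or a diagonal choice $\beta=\beta(h)\to\infty$). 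Your route makes the mechanism explicit: Jensen applied to $\phi(s)=s\log s-s+1$ in $\Ent(j^{2,\lambda}_r\mid\lambda\sigma^\lambda_r)$ gives $G(\|j^{2,\lambda}_r\|_{\TV})\le \Ent(j^{2,\lambda}_r\mid\lambda\sigma^\lambda_r)+1$ uniformly for $\lambda\le 1$, hence a uniform superlinear bound $\sup_{\lambda}\int_0^T G(\|J^\lambda_r\|_{\TV}/V)\,dr\le C+T$, and de la Vall\'ee--Poussin yields the equi-integrability. Both arguments are exploiting the same superlinear growth of the entropy; the paper's is the ``dualized'' version (tuning $\beta$) and yours is the ``primal'' version (passing through the Orlicz-type bound). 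Yours is a bit more transparent and, I think, less error-prone. Two further small observations: (i) your appeal to the trivial bound $\lvert\omega^\lambda_r\rvert\le V\rho^\lambda_r$, hence $\|\omega^\lambda_r\|_{\TV}\le V$, for the equicontinuity of $\rho^\lambda$ is cleaner than what the paper suggests---the quoted route via Corollary~\ref{lem:w-rho-FIR} has the $\lambda/V^2$ prefactor in front of $\calG_t$ here, so the Cauchy--Schwarz constant is not $\lambda$-uniform, whereas the trivial bound is; (ii) your adaptation of Lemma~\ref{lem:J-apriori} for the TV-bound on $J^\lambda$ works but is heavier than needed---the paper simply takes $|\varphi|\le 1$ in the Donsker--Varadhan inequality to get $\|j^{2,\lambda}_t\|_{\TV}\le \Ent(j^{2,\lambda}_t\mid\lambda\sigma^\lambda_t)+\lambda(e-1)$, then $\|J^\lambda_r\|_{\TV}\le V\|j^{2,\lambda}_r\|_{\TV}$, without the heat-kernel localisation.
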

\begin{proof}
The second inequality in~\eqref{eq:Lambda-FIR} is proved in Theorem~\ref{thm:FIR} and the first inequality follows as in Corollary~\ref{lem:w-rho-FIR}. The convergence $\rho^\lambda\rightarrow\bar\rho$ in $C([0,T];\calP(\T))$ follows as in Proposition~\ref{thm:compact}. 

For the convergence of $\omega^\lambda$ in $C([0,T];\calM(\T))$, we will make use of the Arzel\`a-Ascoli theorem. To prove equicontinuity, we will show that (see Proposition~\ref{thm:compact} for a discussion of the equivalence of the condition below to the usual equicontinuity)
\begin{equation}\label{eq:lam-w-eqcont}
        \sup_{0<\lambda\leq 1}\sup_{t\in [0,T-h]}\sup_{\substack{\psi\in C^1(\T)\\ \lVert\psi\rVert_{C^1(\T)}\leq 1}} \int_{\T}\psi\,\bigl(d\omega^\lambda_{t+h}-d\omega^\lambda_{t}\bigr) \xrightarrow{h\rightarrow 0} 0 \, .
\end{equation}
Using $\varphi(x,v)=v\,\psi(x)$ with $\psi\in C^1(\T)$ in the continuity equation~\eqref{eq:CE-IntegralForm} along with $j^{1,\lambda}=v\,\sigma^\lambda$, for any $\beta>0$, we find
\begin{align*}
    \int_{\T}&\psi\,\bigl(d\omega^\lambda_{t+h}-d\omega^\lambda_{t}\bigr) = V^2\int_t^{t+h}  \int_{\T}\partial_x\psi(x) \, \rho^\lambda_r(dx) \, dr - 2\int_t^{t+h} \int_{\Omega_V} v\,\psi(x) \, j^{2,\lambda}_r(dx) \, dr \\
    &\leq V^2\|\psi\|_{C^1(\T)} \, h + \frac{1}{\beta}\int_t^{t+h}\Ent(j^{2,\lambda}_r\mid\lambda\,\sigma^\lambda_r) \, dr + \frac{1}{\beta}\int_t^{t+h} \int_{\Omega_V} \bigl( e^{-2\beta\,v\,\psi(x)}-1\bigr) \, \lambda\,\sigma^\lambda_r(dx) \, dr\\
    &\leq V^2\|\psi\|_{C^1(\T)} \, h + \frac{C}{\beta}
    + \frac{\lambda}{\beta}\bigl( e^{2\beta\,V\|\psi\|_{C^1(\T)}}-1\bigr) \,h \, ,
\end{align*}
where the first inequality follows from the variational form of relative entropy and the second inequality follows since the rate functional is bounded. Since $\beta$ is arbitrary, we can choose it to be sufficiently small such that equicontinuity~\eqref{eq:lam-w-eqcont} follows.

Repeating the arguments as in Corollary~\ref{lem:w-rho-FIR}, there exists $c>0$ independent of $\lambda$ such that 
\begin{equation}\label{eq:Lam-w-bound}
    \lVert \omega^\lambda \rVert^2_{\TV([0, T] \times \Omega_V)} < c \lVert \rho^\lambda\rVert_{\TV([0, T] \times \Omega_V)}\,.
\end{equation}
The narrow convergence of $\sigma^\lambda$ and $\rho^\lambda$ follows from Prokhorov's theorem, since $[0, T] \times \Omega_V$ is compact and $\sigma^\lambda_t, \rho^\lambda_t \in \calP(\Omega_V)$ for every $t \in [0, T]$. The narrow convergence of $\omega^\lambda$ follows similarly as a consequence of~\eqref{eq:Lam-w-bound}. Since the rate functional is finite, $j^{1, \lambda}=v\,\sigma^\lambda\rightarrow v\,\bar\sigma$ narrowly.

Now we discuss the convergence of the fluxes $j^{2, \lambda}$ and $J^\lambda$. For almost every $t \in [0, T]$ and $\varphi\in C(\Omega_V)$ with $\lvert \varphi \rvert \le 1$, we find
\begin{align*}
    \langle \varphi,j_t^{2, \lambda}\rangle \le \Ent(j_t^{2, \lambda}\mid\lambda \, \sigma_t^\lambda) + \int_{\Omega_V}\lambda \,\bigl(e^\varphi-1\bigr) \, d\sigma_t^\lambda \le \Ent(j_t^{2, \lambda}\mid\lambda \, \sigma_t^\lambda) + \lambda \, (e-1) \, ,
\end{align*}
and taking the supremum over these functions yields 
\begin{equation*}
    \lVert j^{2, \lambda} \rVert_{\TV([0, T] \times \Omega_V)} \le \scrI^\lambda(\sigma^\lambda, j^\lambda) + \lambda \, T \, (e-1) \leq C\,,
\end{equation*}
since the rate functional is bounded and $\lambda<1$. Therefore, $j^{2,\lambda}$ converges narrowly in $\calM([0,T]\times\T)$. The narrow convergence of $J^\lambda$ to $\bar J$ follows by repeating the arguments above with $\phi(x,v)=v \, \psi(x)$ for any $\psi\in C(\T)$. The absolute continuity $\bar J(dtdx)=\bar J_t(dx) \, dt$ and the convergence of $\rho_t^\lambda\rightarrow\bar\rho_t$ for every $t\in[0,T]$ follow as in Proposition~\ref{thm:compact}.

Finally,~\eqref{eq:bar-w-CE} follows by once again choosing $\varphi(x,v)=v\,\psi(x)$ with $\psi\in C^1(\T)$ in the continuity equation~\eqref{eq:CE-IntegralForm} and passing $\lambda\rightarrow 0$ with the compactness properties presented above. 
\end{proof}

\begin{lem}\label{lem:lambda_CE}
Under the same assumptions of Proposition~\ref{prop:lambda_compact}, let $\rho^\lambda \to \bar{\rho}$ and $\omega^\lambda \to \bar{\omega}$ in $\calM([0, T]\times \T)$ with respect to the narrow topology and $\rho^\lambda_t\rightarrow\bar\rho_t$ in $\calP(\T)$ with respect to the narrow topology for every $t\in [0,T]$. We then find 
\begin{enumerate}
    \item\label{eq:barW-density} $\bar{\omega} \ll \bar{\rho}$ in $\calM((0,T)\times\T)$ with $\bar{\omega}(dtdx) = \bar{\omega}_t(dx) \, dt$,
    \item\label{eq:bar-CE} $(\bar\rho,\bar\omega)\in \CE(0,T;\T)$ in the sense of Lemma~\ref{lem:barOmega-ac}.
\end{enumerate}
\end{lem}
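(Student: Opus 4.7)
The plan is to establish both claims by purely structural arguments, without needing any further variational estimate. Claim~(1) will follow from the pointwise constraint $|\omega_t^\lambda| \le V\rho_t^\lambda$, which is preserved under narrow limits, and claim~(2) is a direct passage to the limit in the continuity equation tested against functions independent of the velocity. A noteworthy feature of this regime is that the Fisher-information term in~\eqref{eq:Lambda-FIR} is multiplied by $\lambda\to 0$ and so provides no coercive control on $\omega^\lambda$ (in stark contrast with the diffusive limit). It is instead the structural bound $-V\rho^\lambda \le \omega^\lambda \le V\rho^\lambda$ inherited from $\sigma_t^\lambda\in\calP(\Omega_V)$ that plays this role.

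For~(1), I would observe directly from the definitions that
\[
V\rho^\lambda(dtdx) \pm \omega^\lambda(dtdx) = 2V\,\sigma_t^\lambda(dx,\pm V)\,dt \ge 0
\]
in $\calM([0,T]\times\T)$, since each $\sigma_t^\lambda$ is a non-negative measure. Because the narrow limit of a sequence of non-negative finite measures is itself non-negative, passing $\lambda\to 0$ yields $V\bar\rho \pm \bar\omega \ge 0$, so $\bar\omega \ll \bar\rho$ with $|d\bar\omega/d\bar\rho|\le V$. The product form $\bar\omega(dtdx)=\bar\omega_t(dx)\,dt$ is already provided by $\bar\omega\in C([0,T];\calM(\T))$ from Proposition~\ref{prop:lambda_compact}, and the slice-wise absolute continuity $\bar\omega_t\ll\bar\rho_t$ for almost every $t$ then follows by comparing the product decompositions of both sides of the measure inequality just obtained.

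For~(2), testing the continuity equation $(\sigma^\lambda,j^\lambda)\in\CE(0,T;\Omega_V)$ against $\varphi(x,v)=\psi(x)$ with $\psi\in C^1(\T)$ kills the $\dpartial_v$ term, and the identity $j^{1,\lambda}=v\,\sigma^\lambda$ (forced by $\scrI^\lambda(\sigma^\lambda,j^\lambda)<\infty$ via~\eqref{eq:fin-Lag}) reduces~\eqref{eq:CE-IntegralForm} to
\[
\int_\T \psi\,d\rho^\lambda_t - \int_\T \psi\,d\rho^\lambda_s = \int_s^t \int_\T \partial_x\psi\,d\omega^\lambda_r\,dr \, .
\]
The left-hand side converges to $\int_\T \psi\,d\bar\rho_t - \int_\T \psi\,d\bar\rho_s$ by the pointwise-in-time narrow convergence $\rho^\lambda_\tau\to\bar\rho_\tau$ assumed in the statement. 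For the right-hand side, I would exploit the stronger convergence $\omega^\lambda\to\bar\omega$ in $C([0,T];\calM(\T))$ furnished by Proposition~\ref{prop:lambda_compact}: the map $r\mapsto\int_\T \partial_x\psi\,d\omega^\lambda_r$ converges uniformly in $r\in[0,T]$, so its Lebesgue integral over $[s,t]$ passes to the limit immediately.

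Neither step presents a serious analytical obstacle, because all the heavy lifting (compactness, control of fluxes, existence of continuous-time limits) has already been carried out in Proposition~\ref{prop:lambda_compact}. The conceptual subtlety worth flagging is precisely the one mentioned at the outset: here the absolute continuity $\bar\omega \ll \bar\rho$ cannot be extracted from an energetic bound as in Corollary~\ref{lem:w-rho-FIR}, and it is only the bounded-flux constraint inherent to the Kac process that rescues the argument.
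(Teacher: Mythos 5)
Your proof is correct, and for part~(1) it takes a genuinely different route from the paper's. The paper simply refers to the proof of Lemma~\ref{lem:barOmega-ac}, which establishes $\bar\omega\ll\bar\rho$ via the narrow lower semicontinuity of the quadratic functional $\calG_T$ together with the FIR-derived uniform bound $\calG_T(\rho^V,\omega^V)\le 2\alpha C$. You correctly observe that this transfer is not automatic: the hyperbolic FIR inequality~\eqref{eq:Lambda-FIR} gives only $\frac{\lambda}{V^2}\calG_t(\rho^\lambda,\omega^\lambda)\le C$, which degenerates as $\lambda\to 0$, so to make the cited argument go through one must instead invoke the structural bound $\lvert d\omega^\lambda/d\rho^\lambda\rvert\le V$, which yields $\calG_T(\rho^\lambda,\omega^\lambda)\le V^2T$ uniformly in $\lambda$. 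Your alternative---passing the linear cone constraints $V\rho^\lambda\pm\omega^\lambda\ge 0$ to the narrow limit---sidesteps $\calG_T$ and its lower-semicontinuity theorem entirely, is more elementary, and additionally delivers the Lipschitz bound $\lvert d\bar\omega/d\bar\rho\rvert\le V$ for the limit flux, consistent with the finite propagation speed. Part~(2) follows the same lines as the paper (projecting the continuity equation onto velocity-independent test functions and passing to the limit), and your use of the $C([0,T];\calM(\T))$ convergence of $\omega^\lambda$ from Proposition~\ref{prop:lambda_compact} to handle the time integral is a clean alternative to testing against $\chi(t)\psi(x)$. Your framing remark---that in the hyperbolic limit absolute continuity of the limit flux is a structural, not an energetic, consequence---is exactly the right reading of this lemma.
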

The proof of points \emph{(1)} and \emph{(2)} of Lemma~\ref{lem:lambda_CE} follows as in the proof of Lemma~\ref{lem:barOmega-ac}. 

\medskip

We define $\bar{\scrJ} \colon C([0, T]; \calP(\T)) \times C([0, T] ; \calM(\T))\times \calM((0,T); \calM(\T)) \to [0, +\infty]$ by 
\begin{equation}\label{eq:wave-var-form}
    \bar{\scrJ}(\rho, \omega, J) \coloneqq
    \begin{dcases*}
        0 & if $(\rho, \omega,J) \in \ME([0, T], \T)$ with $J = 0$, \\
        +\infty & otherwise.
    \end{dcases*}
\end{equation}
Therefore, the minimizers are the weak solution to the wave equation
\begin{subequations}
\begin{align}
    \partial_t \rho &= - \partial_x \omega \, , \\
    \partial_t \omega &= - V^2 \partial_x \rho \, .
\end{align}
\end{subequations}
in the sense of Definition~\ref{def:ME} with $J=0$. 

\begin{rem}
The combination of the two equations above formally yields the wave equation in both variables:
\begin{equation*}
    \partial_t^2 \rho = V^2 \partial_x^2 \rho \, , \qquad \partial_t^2 \omega = V^2 \partial_x^2 \omega \, .
\end{equation*}
Also note that $\bar\scrJ$ in~\eqref{eq:wave-var-form} is the $\lambda\rightarrow 0$ limit of the variational formulation $\hat\scrI$~\eqref{IrhoomegaJ} for the FC system.
\end{rem}

 The variational structure~\eqref{eq:wave-var-form} for the hyperbolic limit is substantially different from the analogous structure~\eqref{def:LimFun-diff} for the parabolic limit. The functional~\eqref{eq:wave-var-form} is simply a characteristic function in the sense of convex analysis: the solutions of the wave equation are the only admissible curves---there are no ``approximate'' solutions. This is fully consistent with the interpretation for the limit of the stochastic Kac process as $\lambda \to 0$. In this regime, we expect a fully deterministic dynamics where probabilities are simply rigidly transported along the straight motion of the particles.

\medskip

We now discuss the liminf inequality. 
\begin{theorem}[$\liminf$ inequality]
Under the same conditions as in Proposition~\ref{thm:compact}, let $\rho^\lambda\rightarrow\bar\rho$, $\omega^\lambda\rightarrow\bar\omega$, and $J^\lambda\rightarrow\bar J$ in $\calM((0,T)\times \T)$ with respect to the narrow topology, and $\rho^\lambda_t\rightarrow\bar\rho_t$ in $\calP(\T)$ with respect to the narrow topology for every $t\in [0,T]$. Then,
\begin{equation*}
\liminf_{\lambda\rightarrow0} \scrI^\lambda(\sigma^\lambda,j^\lambda) \geq \bar{\scrJ}(\bar\rho,\bar\omega,\bar J) \, .
\end{equation*}
\end{theorem}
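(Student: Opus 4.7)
The plan is to observe that $\bar{\scrJ}$ only takes the values $0$ and $+\infty$, so the only non-trivial case is when $\liminf_{\lambda\to 0} \scrI^\lambda(\sigma^\lambda,j^\lambda) = c < \infty$, in which case one must show that the triple $(\bar\rho,\bar\omega,\bar J)$ belongs to $\ME(0,T;\T)$ and that $\bar J = 0$. The membership in $\ME(0,T;\T)$ is essentially already established: Lemma~\ref{lem:lambda_CE} provides the first momentum identity $\partial_t\bar\rho + \partial_x\bar\omega = 0$, while equation~\eqref{eq:bar-w-CE} in Proposition~\ref{prop:lambda_compact} provides the second identity $\partial_t\bar\omega = -V^2 D\bar\rho - 2\bar J$. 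Hence $(\bar\rho,\bar\omega,\bar J)\in \ME(0,T;\T)$, and the only genuine task is to show that the limiting derived flux vanishes.

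To prove $\bar J = 0$, I would exploit the dual formulation of the Lagrangian~\eqref{eq:fin-Lag}. Along the subsequence where $\scrI^\lambda\to c<\infty$, the constraint $j^{1,\lambda}=v\,\sigma^\lambda$ holds, so for any time-dependent test function $\varphi\in C([0,T]\times\Omega_V)$ one has, by integrating the pointwise Legendre inequality in time,
\begin{equation*}
    \scrI^\lambda(\sigma^\lambda,j^\lambda) \geq \int_0^T\!\!\int_{\Omega_V}\varphi\,dj^{2,\lambda}_t\,dt - \lambda\int_0^T\!\!\int_{\Omega_V}(e^{\varphi}-1)\,d\sigma^\lambda_t\,dt.
\end{equation*}
For arbitrary $\psi\in C_c((0,T)\times\T)$ and $\beta>0$, I would substitute $\varphi(t,x,v) \coloneqq \beta\,\psi(t,x)\,v/V$. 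Since $|\varphi|\leq \beta\|\psi\|_\infty$ and $\sigma^\lambda_t$ is a probability measure, the exponential term is controlled by $\lambda\,T\,(e^{\beta\|\psi\|_\infty}-1)$; using $\int\psi\,\tfrac{v}{V}\,dj^{2,\lambda}_t = \tfrac{1}{V}\int\psi\,dJ^\lambda_t$, this gives
\begin{equation*}
    \frac{\beta}{V}\int_0^T\!\!\int_{\T}\psi(t,x)\,J^\lambda(dtdx) \leq \scrI^\lambda(\sigma^\lambda,j^\lambda) + \lambda\,T\,(e^{\beta\|\psi\|_\infty}-1).
\end{equation*}

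Passing to the liminf along the subsequence and using the narrow convergence $J^\lambda \to \bar J$ from Proposition~\ref{prop:lambda_compact} (noting that the $\lambda$-dependent exponential error vanishes as $\lambda\to 0$ for fixed $\beta$), one obtains $\tfrac{\beta}{V}\int\psi\,d\bar J \leq c$ for every $\beta>0$. Dividing by $\beta$ and sending $\beta\to\infty$ yields $\int\psi\,d\bar J \leq 0$; repeating the argument with $-\psi$ in place of $\psi$ yields $\int\psi\,d\bar J = 0$ for every $\psi\in C_c((0,T)\times\T)$, hence $\bar J = 0$ as a Radon measure on $(0,T)\times\T$. Consequently $\bar{\scrJ}(\bar\rho,\bar\omega,\bar J) = 0 \leq c$, which proves the liminf inequality.

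The step I expect to require the most care is the justification of the time-integrated dual inequality with a continuous test function depending on both $t$ and $(x,v)$; while this is standard when the Lagrangian is given pointwise in $t$ by the Legendre transform of $\scrH$, one must verify that the supremum over $C(\Omega_V)$-valued paths is attained by continuous-in-$t$ test functions (or use a measurable selection argument). The rest is a clean diagonal trick: the factor $\beta$ can be sent to infinity \emph{after} $\lambda\to 0$ because the $\lambda\,T\,(e^{\beta\|\psi\|_\infty}-1)$ term disappears in the first limit, so no joint rate is needed.
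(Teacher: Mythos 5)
Your proof is correct and follows essentially the same route as the paper: exploit the dual formulation of the Lagrangian with a test function proportional to $\psi\, v$, note that the Hamiltonian contribution is $O(\lambda)$ and hence vanishes in the limit, and then scale the test function to conclude $\bar J = 0$, the membership $(\bar\rho,\bar\omega,\bar J)\in\ME(0,T;\T)$ already being secured by Proposition~\ref{prop:lambda_compact} and Lemma~\ref{lem:lambda_CE}. The one refinement in your write-up is that you test with $\psi\in C_c((0,T)\times\T)$ (time-dependent), which cleanly yields $\bar J = 0$ as a measure on $(0,T)\times\T$; the paper's shorthand ``taking the supremum over $\psi\in C(\T)$'' uses time-independent test functions and, read literally, only gives $\int_0^T \bar J_t\,dt = 0$ — your version closes this small gap.
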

\begin{proof}
Choosing $\varphi(x,v)=\psi(x) \, v$ in~\eqref{eq:fin-Lag}, we find
\begin{align*}
\scrI^\lambda(\sigma^\lambda,j^\lambda) &\geq \int_0^T\int_{\Omega_V} \Bigl( \psi(x) \, v \, j^{2,\lambda}_t(dxdv) - \lambda \bigl( e^{\psi(x)\,v}-1\bigr) \, \sigma^\lambda_t(dxdv) \Bigr) \, dt \\
&= \int_0^T \Bigl( \int_{\T} \psi(x) \, J^\lambda_t(dx) - \int_{\Omega_V} \lambda \bigl( e^{\psi(x)\,v}-1\bigr) \, \sigma^\lambda_t(dxdv) \Bigr) \, dt \, .
\end{align*}
Passing $\lambda\rightarrow0$ and using the compactness results, we obtain
\begin{equation*}
    \liminf_{\lambda\rightarrow\infty} \scrI^\lambda(\sigma^\lambda,j^\lambda) \geq \int_0^T\int_\T \psi(x) \, \bar{J}_t(dx) \, dt \, .
\end{equation*}
Taking the supremum over $\psi\in C(\T)$, we arrive at the required result.    
\end{proof}

\section{Discussion}\label{sec:Dis}

In this article, we have presented a variational structure for the second-order hyperbolic Fourier-Cattaneo (FC) system by using the large deviations of the (stochastic) Kac process, which is a piecewise-deterministic Markov process. The key ingredient is a bijective mapping which links the law of the Kac process to the FC system and is used to construct the aforementioned variational structure. We then use this structure to present appropriate solution concepts and FIR inequality for these systems. Finally, we study the limiting behaviour of these systems in the diffusive and hyperbolic asymptotic regimes. This work is the first study which offers a variational perspective to measure-valued hyperbolic equations by introducing new solution concepts and variational techniques for scale-bridging.

Although we have limited ourselves to the one-dimensional torus $\T$ as the spatial state-space, we expect that all the ideas readily generalise to the unbounded setting of $\R$ with a possible modification—e.g., we may add a spatial confining potential in the Kac equation to ensure tightness. Since we are interested in connections to the hyperbolic FC system, we are limited to the one-dimensional setting (recall the discussion in Section~\ref{sec:Poisson-Kac}). However, the Kac process and the corresponding Kac equation exist in higher dimensions~\cite{GBC17d} and we will explore these systems and corresponding asymptotic limits in future work. 

The variational structure presented in Section~\ref{sec:variational} for the Kac and FC equations are closely related to recent large-deviation-inspired variational formulations~\cite{AdamsDirrPeletierZimmer11,MPR14,PRS21,PRST22} for (possibly nonlinear) systems of the type 
\begin{equation*}
    \partial_t \sigma = \mathrm{div}\, j, \quad j=j(\sigma),
\end{equation*}
where the flux $j$ only depends on $\sigma$. Note that, even though $j$ plays a similar role to the one in this article, at the level of the macroscopic dynamics, it is a dummy variable. This is in stark contrast to systems studied in this paper where the flux~ $\omega$ has an associated evolution equation making the density-flux pair truly a coupled system. This is to be expected since the hyperbolic heat equation is of hyperbolic type with first and second order derivatives in time.

The `passing to the limit' via the variational structure in Section~\ref{sec:Limits}
is closely related to (Gamma-)limits of $(\Psi,\Psi^*)$-type variational formulations for gradient flows~\cite{SandierSerfaty04,ASZ09,MPR14}. This literature, as in our case, crucially uses the duality structure of the variational formulation and typically assumes well-prepared initial data. Our additional assumption of bounded rate functional arises naturally in the context of large deviations, and our results in some sense capture the convergence of typical behaviour and fluctuations around it. The main difference, as opposed to the aforementioned literature, is that the systems studied in this paper are not gradient flows. This is for instance directly seen from the deterministic transport term in the Kac equation.


\medskip
\paragraph{\textbf{Acknowledgements.}} The authors thank Davide Gabrielli, Massimiliano Giona and Michiel Renger for enlightening discussions on the Kac process. The research of AM was funded by the Swiss National Science
Foundation via the Early Postdoc.Mobility fellowship. The work of US is supported by the Alexander von Humboldt foundation. OT acknowledges support from NWO Vidi grant 016.Vidi.189.102 on ``Dynamical-Variational Transport Costs and Application to Variational Evolution''.

\appendix

\section{From path to flux large deviations}\label{sec:LD}

In this section, we motivate the variational structure of the Kac equation \eqref{marginals} introduced in Section~\ref{subsec:main-results} via a formal reformulation of the large-deviation rate function corresponding to the Kac process.

Let us consider $N$ independent copies of the Kac process on the state space~$\Omega_V \coloneqq \T \times \{-V, V\}$ with initial distribution~$\bar{\sigma}$. The single-particle process is a piecewise deterministic Markov process with deterministic drift~$v$ and jump kernel~$\mu(x, v; \cdot) \coloneqq \lambda \, \delta_{(x, -v)}$, i.e., at each jump, which occurs at rate $\lambda$, the position remains fixed, and the velocity is switched. We denote the law of such a process by $\hat{\bbp}_{\bar{\sigma}} \in \calP(D([0, T], \Omega_V))$. As in Section~\ref{sec:Poisson-Kac}, we then construct the empirical process $\bbP^N \colon (D([0, T], \Omega_V))^N \to \calP(D([0, T], \Omega_V))$ as
\begin{equation*}
    \bbP^N\!\bigl(x^1_\cdot, v^1_\cdot, x^2_\cdot, v^2_\cdot, \ldots, x^N_\cdot, v^N_\cdot\bigr) \coloneqq \frac1N \sum\limits_{i=1}^N \delta_{(x^i_\cdot, v^i_\cdot)} \, .
\end{equation*}
Since the particles are independent, by the large of large numbers, $\bbP^N$ converges almost surely to $\hat{\bbp}_{\bar{\sigma}}$ as $N \to \infty$. Here, however, we are not only interested in the most probable behavior of $\bbP^N$ as $N \to \infty$, but also in the atypical deviations from it. We thus want to find a large-deviation principle for the sequence of measure-valued stochastic processes~$\bbP^N$, which we express informally as
\begin{equation*}
    \operatorname{Prob}\bigl(\bbP^N \approx \bbp\bigr) \asymp e^{-N \I(\bbp)} \qquad \text{as } N \to \infty \, .
\end{equation*}
This means that the fluctuations of the random variable~$\bbP^N$ decay with $N$ in an exponential way, and the rate of decay is quantified in terms of the \emph{rate function}~$\I$.

Since the particles are independent, by Sanov's theorem, the empirical process satisfies a large-deviation principle in $\calP(D([0, T], \Omega_V))$ with rate function
\begin{equation}\label{entropy3}
    \I(\bbp) = \Ent(\bbp \mid \widehat{\bbp}_{\bar{\sigma}}) \, ,
\end{equation}
where $\Ent(\cdot | \cdot)$ is the relative entropy on $\calP(D([0, T], \Omega_V)) \times \calP(D([0, T], \Omega_V))$, defined as
\begin{equation}
    \Ent(\bbp \mid \bbr) \coloneqq
    \begin{cases*}
        \bbE_{\bbp}\Bigl[\log\frac{d\bbp}{d\bbr}\Bigr] & if $\bbp \ll \bbr$, \\
        +\infty & otherwise.
    \end{cases*}
\end{equation}
In this expression, $\bbE_{\bbp}$ denotes the expectation value with respect to the probability measure~$\bbp$ and $d\bbp/d\bbr$ is the Radon-Nikodym derivative of $\bbp$ with respect to $\bbr$, which exists whenever $\bbp$ is absolutely continuous with respect to $\bbr$, namely $\bbp \ll \bbr$. We note that the rate function is minimal and zero at $\bbp = \hat{\bbp}_{\bar{\sigma}}$, i.e., at the most probable realization of the empirical process.

Following \cite[Theorem~2.8]{ConfortiLeonard2022}, one obtains an alternative formulation of the relative entropy~\eqref{entropy3} when $\hat{\bbp}_{\bar{\sigma}}$ is the law of a Markov process. Indeed, when $\bbp$ has finite relative entropy with respect to $\hat{\bbp}_{\bar{\sigma}}$, then $\bbp$ is a solution to the martingale problem with drift $\hat\jmath^1_t(x,v)=(v,0)^\top$ and some (time-dependent) jump kernel $\hat\jmath_t^2\ll \mu=\lambda \, \delta_{(x, -v)}$ for every $t \in [0, T]$. In particular, the time marginal flow $t\mapsto\sigma_t \coloneqq (e_t)_\sharp\bbp$ (with $e_t$ being the time evaluation map) satisfies the Kolmogorov forward equation
\begin{equation}\label{KFEhat}
    \partial_t \sigma + v\,\partial_x \sigma = \int_{\Omega_V} \! \hat\jmath^2(x', v',\cdot) \, \sigma(dx'dv') - \sigma \int_{\Omega_V} \! \hat\jmath^2(\cdot,dx'dv')\,.
\end{equation}
Moreover, the relative entropy takes the expression
\begin{equation*}
    \Ent(\bbp \mid \hat{\bbp}_{\bar{\sigma}}) = \bbE_{\sigma_0}\Bigl[\log\frac{d\sigma_0}{d\bar{\sigma}}\Bigr] + \int_0^T \int_{\Omega_V} \bbE_{\hat\jmath^2_t(x, v; \cdot)}\biggl[\log\frac{d\hat\jmath^2_t(x, v; \cdot)}{d\mu(x, v; \cdot)}\biggr] \, \sigma_t(dxdv) \, dt \, .
\end{equation*}
Therefore, one formally obtains the final formulation of the rate function
\begin{equation}\label{RF-2.5}
    \I(\bbp) =
    \begin{dcases*}
        \Ent(\sigma_0 \mid \bar{\sigma}) + \int_0^T \int_{\Omega_V} \Ent\bigl(\,\hat\jmath^2_t(x, v; \cdot) \mid \mu(x, v; \cdot)\bigr) \, \sigma_t(dxdv) \, dt & if \eqref{KFEhat} holds, 
        \\
        \infty & otherwise.
    \end{dcases*}
\end{equation}

This is almost the starting point of the rest of the paper---all that is left is a slight adjustment of the notation. Let us note that, when $\hat\jmath^2 \ll \mu$, we have
\begin{align*}
    \int_{\Omega_V} & \int_{\Omega_V \setminus \{x, v\}} \! \log\frac{d\hat\jmath^2(x,v;\cdot)}{d\mu(x,v;\cdot)}(x', v') \, \hat\jmath^2(x,v;dx'dv')\,\sigma(dxdv) \\
    &= \int_{\Omega_V} \int_{\Omega_V \setminus \{x, v\}} \frac{d\hat\jmath^2(x,v;\cdot)}{d\mu(x,v;\cdot)}(x', v') \, \log\frac{d\hat\jmath^2(x,v;\cdot)}{d\mu(x,v;\cdot)}(x', v') \, \lambda \, \delta_{x, -v}(dx'dv') \, \sigma(dxdv) \\
    &= \int_{\Omega_V} \, \frac{d\hat\jmath^2(x,v;\cdot)}{d\mu(x,v;\cdot)}(x, -v) \, \log\frac{d\hat\jmath^2(x,v;\cdot)}{d\mu(x,v;\cdot)}(x, -v) \, \lambda  \, \sigma(dxdv) \\
    &= \int_\Omega u(x,v) \log u(x,v) \,\lambda \, \sigma(dx\,dv) = \Ent(j^2 \mid \lambda \, \sigma) \, ,
\end{align*}
where we have defined the measure
\begin{equation*}
    j^2 \coloneqq u \, \lambda \, \sigma \in \calM(\Omega_V),\qquad\text{with}\quad u(x, v) \coloneqq \frac{d\hat\jmath^2(x,v;\cdot)}{d\mu(x,v;\cdot)}(x, -v)\,. 
\end{equation*}
Because of the simple form of the jump kernel~$\mu$, we may choose $j^2$ as a flux variable instead of the full jump kernel~$\hat\jmath^2$. Similarly, we define
\begin{equation*}
    j^1 \coloneqq v\, \sigma \in \calM(\Omega_V) \, .
\end{equation*}

In terms of $j = (j^1, j^2)$, the Kolmogorov forward equation becomes
\begin{equation}\label{KFCE}
    \partial_t \sigma + \partial_x j^1 = \iota_\# j^2 - j^2 \qquad \text{with } j^1 = v \, \sigma \, , \tag{KFE}
\end{equation}
and the rate function now reads
\begin{equation}
    \I(\bbp) = \Ent(\sigma_0 \mid \bar{\sigma}) + \scrI(\sigma, j)\,,
\end{equation}
with
\begin{equation}\label{RateFunction}
 \scrI(\sigma, j) \coloneqq
    \begin{dcases*}
         \int_0^T \Ent(j^2_t \mid \lambda \, \sigma_t) \, dt & if \eqref{KFCE} and $j^1 = v \, \sigma$, 
        \\
        \infty & otherwise,
    \end{dcases*}
\end{equation}
which is precisely the functional defined in \eqref{RF}.

\begin{rem}
    Rate functions of the form \eqref{RF-2.5} appear when establishing large-deviations results related to fluctuations of the fraction of time spent in each state of a random system. This is commonly known in the large-deviation community as large deviations at the level 2.5 (cf.\ \cite{BaratoChetrite2015} and a series of papers by Donsker and Varadhan starting with \cite{DonskerVaradhan1975}).
\end{rem}

Another large-deviation principle that is relevant for us involves the invariant measure of the Kac process, namely the uniform distribution $\stat \coloneqq \calL_{\T} \otimes \operatorname{Unif}_{\!\{-V, V\}}$. Specifically, there is a large-deviation principle for the empirical measure
\begin{equation*}
 \Sigma^N \colon \Omega_V^N \to \calP(\Omega_V) \, ,\qquad \Sigma^N(x^1, v^1, x^2, v^2, \ldots, x^N, v^N) \coloneqq \frac{1}{N} \sum\limits_{i=1}^N \delta_{x^i, v^i} \, ,
\end{equation*}
when $\{(x^i, v^i)\}_{i=1,\ldots,N}$ are i.i.d. random variables distributed according to the invariant measure~$\pi$. Again, Sanov's theorem gives the large-deviation principle
\begin{equation}
 \operatorname{Prob}\bigl(\Sigma^N \approx \sigma\bigr) \asymp e^{-N \bbS(\sigma)} \qquad \text{as } N \to \infty
\end{equation}
with rate function
\begin{equation}
 \bbS(\sigma) = \Ent(\sigma \mid \pi) \, .
\end{equation}

\section{Pre-GENERIC structure for the FC system}\label{sec:preGEN}
It turns out the variational structures introduced in this paper for the Kac equation and the FC system induce pre-GENERIC structures \cite{KZMP20} on the respective state spaces. Similar structures, but fully GENERIC and quadratic in nature, have been proposed for equations similar to the FC system \cite[Section~5.4]{PKG18}. In this appendix we focus on the pre-GENERIC structure for the FC system---the one for the Kac equation being completely analogous. In contrast to \cite{KZMP20}, here we give a formulation in terms of a continuity equation for the pair $(\rho, \omega)$. This extends the formulation of gradient structures in continuity-equation format given in \cite{PS22}.

Let us consider the projection of $\CE(0, T; \Omega_V)$ onto $\T$, namely the system~\eqref{CE_proj}, which we recall here
\begin{align}
 \partial_t \rho + \partial_x J_1^1 &= 0 \, , \\
 \partial_t \omega + \partial_x J_2^1 &= - 2 J^2_2 \, .
\end{align}
We want to write it shortly as
\begin{equation*}
 \partial_t(\rho, \omega) + \mdiv J = 0 \, ,
\end{equation*}
and therefore introduce a new notion of a continuity equation.
\begin{defi}[\textbf{P}rojected \textbf{C}ontinuity \textbf{E}quation]
The quadruple $(\rho,\omega,J_1,J_2)\in \PCE(0,T;\T)$ if
\begin{enumerate}
    \item $(\rho,\omega)\in C([0,T];\calP(\T))\times C((0,T);\calM(\T))$ 
    \item $((J_i)_t)_{t\in (0,T)}\subset \calM(\T; \R^2)$, $i=1,2$, are measurable families satisfying
        \[
            \int_0^T \|(J_i)_t\|_{\TV}\,dt <\infty,
        \]
    \item
    for any $\psi\in C^{1}(\T; \R^2)$ and $0 \leq s \leq t \leq T$,
    \begin{align}
     \langle\psi_1, \rho_t\rangle - \langle\psi_1, \rho_s\rangle &= \int_s^t \bigl\langle (\mnabla\psi)_1, (J_1)_r \bigr\rangle \, dr \, , \\
     \langle\psi_2, \omega_t\rangle - \langle\psi_2, \omega_s\rangle &= \int_s^t \bigl\langle (\mnabla\psi)_2, (J_2)_r \bigr\rangle \, dr \, .
    \end{align}
     where $\mnabla\psi \coloneqq \bigl( (\partial_x\psi_1, 0), (\partial_x\psi_2, -2 \psi_2) \bigr)$ and $\langle a, b \rangle \coloneqq \int_{\T} a \cdot db$.
\end{enumerate}
\end{defi}

The projection of the functional~\eqref{eq:fin-Lag} is then given by
\begin{equation}\label{IrhoomegaJ2}
    \widetilde\scrJ(\rho,\omega,J) \coloneqq \begin{dcases*}
        \int_0^T \widetilde{\scrL}(\rho_t, \omega_t, J_t) \, dt \ \ &if $(\rho,\omega,J)\in \text{PCE}(0,T;\T)$,\\
        +\infty &otherwise,
\end{dcases*}
\end{equation}
with
\begin{equation}\label{LrhoomegaJ}
\begin{aligned}
 \widetilde{\scrL}(\rho, \omega, J) \coloneqq&\; \scrL\bigl(\Pi_V^{-1}(\rho, \omega), \Pi_V^{-1}J^1, \Pi_V^{-1}J^2\bigr) \\
 =&
 \begin{dcases*}
  \Ent\bigl(\Pi_V^{-1}J^2 \mid \lambda \, \Pi_V^{-1}(\rho, \omega)\bigr) & if $\Pi_V^{-1}J^1 = v \, \Pi_V^{-1}(\rho, \omega)$, \\
  +\infty & otherwise.
 \end{dcases*}
 \end{aligned}
\end{equation}
and dual
 \begin{align}\label{HrhoomegaJ}
  \widetilde{\scrH}(\rho, \omega, \psi) &= \sup\limits_{J \in \calM(\T)} \bigl(\langle\psi_1, J_1\rangle + \langle\psi_2, J_2\rangle - \widetilde{\scrL}(\rho, \omega, J)\bigr) \nonumber \\
  &= \int_{\T} \Bigl( \psi^1_1 \, d\omega + V^2 \psi^1_2 \, d\rho + \lambda \, e^{\psi^2_1} \bigl( \cosh(V\psi^2_2) - 1 \bigr) \, d\rho + \frac{\lambda}{V} \, e^{\psi^2_1} \sinh(V\psi^2_2) \, d\omega \Bigr) \, .
 \end{align}
 From this functional, one may construct a pre-GENERIC structure, which we now define for our specific case.
 \begin{defi}[Pre-GENERIC structure and flow in continuity-equation format]
 \sloppy{A \emph{pre-GENERIC structure in continuity-equation format} on the state space $Z \coloneqq \calP(\T) \times \calM(\T)$ is a quadruple $(\mnabla, \scrS, B,\scrR)$ where}
 \begin{enumerate}
  \item a gradient operator~$\mnabla \colon C^1(\T; \R^2) \to C(\T; \R^2) \times C(\T; \R^2)$ with the  transpose~$-\mdiv$;
  \item a continuously differentiable function $\scrS \colon Z \to [0, \infty]$, often called the driving function;
  \item a vector field $B \in \calM(\T; \R^2) \times \calM(\T; \R^2)$ that satisfies $\langle \mnabla d\scrS(z), B(z)\rangle = 0$ for all~$z \in Z$;
  \item a dissipation potential~$\scrR \colon \calP(\T) \times \calM(\T) \times C(\T; \R^2) \times C(\T; \R^2) \to [0, \infty]$ such that $\xi \mapsto \scrR(z, \xi)$ is convex, lower semicontinuous and satisfies $\min\scrR(z, \cdot) = \scrR(z, 0) = 0$ for all $z \in Z$.
 \end{enumerate} 
 The \emph{pre-GENERIC flow in continuity-equation format} corresponding to such structure is the evolution equation given by
 \begin{align*}
  \partial_t z + \mdiv J = 0 \qquad \text{and} \qquad J &= B(z) + \partial_\xi\scrR\bigl(z, -\mnabla d\scrS(z)\bigr) \\
 &= B(z) +  \partial_\psi\widetilde{\scrH}(z, 0) \, .
 \end{align*}
\end{defi}

This definition implies that the dynamics generated by~$B$ preserves the driving function~$\scrS$. Furthermore, along the dynamics generated by the dissipation potential~$\scrR$ the driving function is Lyapunov. In this sense, $B$ is the nondissipative part of the evolution, and the rest is the purely dissipative---the driving function usually has the interpretation of a free energy or (minus) thermodynamic entropy.

Let us discuss the three building blocks $\scrS$, $B$, and $\scrR$ one by one. The driving function is again inspired by large deviations \cite{KZMP20} (see \appendixname~\ref{sec:LD}) which yield the function
\begin{subequations}\label{preGEN}
\begin{equation}
 \scrS(\rho, \omega) \coloneqq \Ent\bigl(\Pi_V^{-1}(\rho, \omega) \mid \Pi_V^{-1}(\calL_{\T}, 0)\bigr) \, .
\end{equation}
The nondissipative vector field $B=(B_1,B_2)$, which can be read off from the linear term in the Hamiltonian has the components
\begin{equation}
 B_1(\rho, \omega) \coloneqq \begin{pmatrix} \omega \\ 0 \end{pmatrix} \qquad \text{and} \qquad B_2(\rho, \omega) \coloneqq \begin{pmatrix} V^2 \rho \\ 0 \end{pmatrix} .
\end{equation}
Finally, the dissipation potential may be recovered from the Hamiltonian by the translation \cite[\Eq(39)]{drmR18b}, which gives
\begin{align}
 \scrR(\rho, \omega, \xi) &= 2 \Bigl[ \scrH\Bigl(\rho, \omega, \frac12 \bigl(\xi + \mnabla d\scrS(\rho, \omega)\bigr) \Bigr) - \scrH\Bigl(\rho, \omega, \frac12 \mnabla d\scrS(\rho, \omega) \Bigr) \Bigr] - \bigl\langle \xi, B(\rho, \omega) \bigr\rangle \nonumber \\
 &= 2 \lambda \int_{\T} \Bigl(\frac{d\sigma_-}{d\calL_{\T}} \frac{d\sigma_+}{d\calL_{\T}}\Bigr)^{\!\frac12} e^{V\frac{\xi_1^2}{2}} \bigl(\cosh(V \xi_2^2) - 1\bigr) \, d\calL_{\T} \qquad \text{with} \quad \sigma_\pm \coloneqq \frac12 \Bigl(\rho \pm \frac{\omega}{V}\Bigr) \, .
\end{align}
\end{subequations}
Apart from the additional exponential dependence on~$\xi_1^2$, this is the classical dissipation potential associated with Markov jump processes \cite{PRST22}. The additional dependence plays no role, since the operator~$\mdiv$ does not act on the component~$J_1^2$.

\begin{rem}
 Note that the functional~\eqref{IrhoomegaJ2} is equivalent to~\eqref{IrhoomegaJ} as long as we set $J_1^2 = \omega$ and $J_2^1 = V^2 \rho$ in \eqref{IrhoomegaJ2}, which indeed are the two conditions that arise from the constraint $\Pi_V^{-1}J^1 = v \, \Pi_V^{-1}(\rho, \omega)$ in \eqref{IrhoomegaJ2}. The notion of the Momentum Equation incorporates the two conditions directly in its definition.
\end{rem}

{\small
\bibliographystyle{unsrt}
\bibliography{Bibliography}

\begin{thebibliography}{10}

\bibitem{OM53}
Lars Onsager and Stefan Machlup.
\newblock Fluctuations and irreversible processes.
\newblock {\em Physical Review}, 91:1505--1512, 9 1953.

\bibitem{AdamsDirrPeletierZimmer11}
Stefan Adams, Nicolas Dirr, Mark~A. Peletier, and Johannes Zimmer.
\newblock From a large-deviations principle to the {W}asserstein gradient flow:
  A new micro-macro passage.
\newblock {\em Communications in Mathematical Physics}, 307:791--815, 2011.

\bibitem{AdamsDirrPeletierZimmer13}
Stefan Adams, Nicolas Dirr, Mark~A. Peletier, and Johannes Zimmer.
\newblock Large deviations and gradient flows.
\newblock {\em Philosophical Transactions of the Royal Society A: Mathematical,
  Physical and Engineering Sciences}, 371(2005):20120341, 2013.

\bibitem{MPR14}
Alexander Mielke, Mark~A. Peletier, and D.~R.~Michiel Renger.
\newblock On the relation between gradient flows and the large-deviation
  principle, with applications to {M}arkov chains and diffusion.
\newblock {\em Potential Analysis}, 41(4):1293--1327, 2014.

\bibitem{cC48}
Carlo Cattaneo.
\newblock Sulla conduzione del calore.
\newblock {\em Atti del Seminario Matematico e Fisico dell'Universit\`a di
  Modena e Reggio Emilia}, 3(3), 1948.

\bibitem{JosephPreziosi89}
Daniel~D. Joseph and Luigi Preziosi.
\newblock Heat waves.
\newblock {\em Reviews of Modern Physics}, 61(1):41, 1989.

\bibitem{RosenauKamin82}
Philip Rosenau and Shoshana Kamin.
\newblock Non-linear diffusion in a finite mass medium.
\newblock {\em Communications on Pure and Applied Mathematics}, 35(1):113--127,
  1982.

\bibitem{DebbaschMallickRivet97}
Fabrice Debbasch, Kirone Mallick, and Jean-Pierre Rivet.
\newblock Relativistic {O}rnstein-{U}hlenbeck process.
\newblock {\em Journal of Statistical Physics}, 88(3):945--966, 1997.

\bibitem{DunkelHanggi09}
J{\"o}rn Dunkel and Peter H{\"a}nggi.
\newblock Relativistic brownian motion.
\newblock {\em Physics Reports}, 471(1):1--73, 2009.

\bibitem{Furth1920}
Reinhold F{\"u}rth.
\newblock Die {B}rownsche {B}ewegung bei {B}er{\"u}cksichtigung einer
  {P}ersistenz der {B}ewegungsrichtung. mit {A}nwendungen auf die {B}ewegung
  lebender {I}nfusorien.
\newblock {\em Zeitschrift f{\"u}r Physik}, 2(3):244--256, 1920.

\bibitem{Taylor1921}
Geoffrey~I. Taylor.
\newblock Diffusion by {C}ontinuous {M}ovements.
\newblock {\em Proceedings of the London Mathematical Society (2)},
  20(3):196--212, 1921.

\bibitem{Goldstein1951}
Sidney Goldstein.
\newblock On diffusion by discontinuous movements, and on the telegraph
  equation.
\newblock {\em The Quarterly Journal of Mechanics and Applied Mathematics},
  4:129--156, 1951.

\bibitem{mK74}
Mark Kac.
\newblock A stochastic model related to the telegrapher's equation.
\newblock {\em Rocky Mountain Journal of Mathematics}, 4(3):497--510, 1974.

\bibitem{McKean1967}
Henry~P. McKean, Jr.
\newblock Chapman-{E}nskog-{H}ilbert expansion for a class of solutions of the
  telegraph equation.
\newblock {\em Journal of Mathematical Physics}, 8:547--552, 1967.

\bibitem{HPST20}
Bastian Hilder, Mark~A. Peletier, Upanshu Sharma, and Oliver Tse.
\newblock An inequality connecting entropy distance, {F}isher information and
  large deviations.
\newblock {\em Stochastic Processes and their Applications}, 130(5):2596--2638,
  2020.

\bibitem{LionsToscani1997}
Pierre~L. Lions and Giuseppe Toscani.
\newblock Diffusive limit for finite velocity {B}oltzmann kinetic models.
\newblock {\em Revista Matem\'{a}tica Iberoamericana}, 13(3):473--513, 1997.

\bibitem{BDDLTV2003}
Naoufel~B. Abdallah, Pierre Degond, Fabrice Deluzet, Vladimir Latocha, Rachid
  Talaalout, and Marie~H. Vignal.
\newblock Diffusion limits of kinetic models.
\newblock In {\em Hyperbolic problems: theory, numerics, applications}, pages
  3--17. Springer, Berlin, 2003.

\bibitem{Eftimie2012}
Raluca Eftimie.
\newblock Hyperbolic and kinetic models for self-organized biological
  aggregations and movement: a brief review.
\newblock {\em Journal of Mathematical Biology}, 65(1):35--75, 2012.

\bibitem{BellouquidChouhad2016}
Abdelghani Bellouquid and Nadia Chouhad.
\newblock Kinetic models of chemotaxis towards the diffusive limit: asymptotic
  analysis.
\newblock {\em Mathematical Methods in the Applied Sciences},
  39(11):3136--3151, 2016.

\bibitem{OthmerHillen00}
Hans~G. Othmer and Thomas. Hillen.
\newblock The diffusion limit of transport equations derived from velocity-jump
  processes.
\newblock {\em SIAM Journal on Applied Mathematics}, 61(3):751--775, 2000.

\bibitem{KL99}
Claude Kipnis and Claudio Landim.
\newblock {\em Scaling Limits of Interacting Particle Systems}.
\newblock Springer, 1999.

\bibitem{HillenHadeler05}
Thomas Hillen and Karl-Peter Hadeler.
\newblock Hyperbolic systems and transport equations in mathematical biology.
\newblock In {\em Analysis and numerics for conservation laws}, pages 257--279.
  Springer, Berlin, 2005.

\bibitem{Dudley18}
Richard~M. Dudley.
\newblock {\em Real analysis and probability}.
\newblock CRC Press, 2018.

\bibitem{Hillen2002}
Thomas Hillen.
\newblock Hyperbolic models for chemosensitive movement.
\newblock {\em Mathematical Models and Methods in Applied Sciences},
  12(07):1007--1034, 2002.

\bibitem{Kuehn2016}
Christian Kuehn.
\newblock Moment closure---a brief review.
\newblock In {\em Control of self-organizing nonlinear systems}, Understanding
  Complex Systems, pages 253--271. Springer, 2016.

\bibitem{KST2014}
Axel Klar, Florian Schneider, and Oliver Tse.
\newblock Approximate models for stochastic dynamic systems with velocities on
  the sphere and associated {F}okker-{P}lanck equations.
\newblock {\em Kinetic and Related Models}, 7(3):509--529, 2014.

\bibitem{Perthame2004}
Beno\^{\i}t Perthame.
\newblock Mathematical tools for kinetic equations.
\newblock {\em Bulletin of the American Mathematical Society}, 41(2):205--244,
  2004.

\bibitem{FLP2005}
Francis Filbet, Philippe Lauren\c{c}ot, and Beno\^{\i}t Perthame.
\newblock Derivation of hyperbolic models for chemosensitive movement.
\newblock {\em Journal of Mathematical Biology}, 50(2):189--207, 2005.

\bibitem{DolakSchmeiser2005}
Y.~Dolak and C.~Schmeiser.
\newblock Kinetic models for chemotaxis: hydrodynamic limits and
  spatio-temporal mechanisms.
\newblock {\em Journal of Mathematical Biology}, 51(6):595--615, 2005.

\bibitem{PKG18}
Michal Pavelka, V{\'{a}}clav Klika, and Miroslav Grmela.
\newblock {\em Multiscale Thermo-Dynamics}.
\newblock De Gruyter, 2018.

\bibitem{DLPS17}
M.~Hong Duong, Agnes Lamacz, Mark~A. Peletier, and Upanshu Sharma.
\newblock Variational approach to coarse-graining of generalized gradient
  flows.
\newblock {\em Calculus of Variations and Partial Differential Equations},
  56(4), 2017.

\bibitem{DLPSS18}
M.~Hong Duong, Agnes Lamacz, Mark~A. Peletier, Andr{\'e} Schlichting, and
  Upanshu Sharma.
\newblock Quantification of coarse-graining error in {L}angevin and overdamped
  {L}angevin dynamics.
\newblock {\em Nonlinearity}, 31(10):4517, 2018.

\bibitem{PRS21}
Robert I.~A. Patterson, D.~R.~Michiel Renger, and Upanshu Sharma.
\newblock Variational structures beyond gradient flows: a macroscopic
  fluctuation-theory perspective.
\newblock {\em arXiv e-prints}, 2021.

\bibitem{JKO98}
Richard Jordan, David Kinderlehrer, and Felix Otto.
\newblock The variational formulation of the {F}okker--{P}lanck equation.
\newblock {\em SIAM Journal on Mathematical Analysis}, 29(1):1--17, 1998.

\bibitem{Otto01}
Felix Otto.
\newblock The geometry of dissipative evolution equations: The porous medium
  equation.
\newblock {\em Communications in Partial Differential Equations},
  26(1-2):101--174, 2001.

\bibitem{GiacomelliOtto01}
Lorenzo Giacomelli and Felix Otto.
\newblock Variational formulation for the lubrication approximation of the
  {H}ele-{S}haw flow.
\newblock {\em Calculus of Variations and Partial Differential Equations},
  13(3):377--403, 2001.

\bibitem{SandierSerfaty04}
Etienne Sandier and Sylvia Serfaty.
\newblock {Gamma-convergence of gradient flows with applications to
  Ginzburg-Landau}.
\newblock {\em Communications on Pure and Applied Mathematics},
  57(12):1627--1672, 2004.

\bibitem{CarrilloMcCannVillani06}
Jos{\'e}~A. Carrillo, Robert~J. McCann, and C{\'e}dric Villani.
\newblock Contractions in the 2-{W}asserstein length space and thermalization
  of granular media.
\newblock {\em Archive for Rational Mechanics and Analysis}, 179(2):217--263,
  2006.

\bibitem{AmbrosioGigliSavare08}
Luigi Ambrosio, Nicola Gigli, and Giuseppe Savar\'e.
\newblock {\em Gradient Flows in Metric Spaces and in the Space of Probability
  Measures}.
\newblock Lectures in Mathematics. ETH Z\"urich. Birkh\"auser, 2008.

\bibitem{PortegiesPeletier10}
Jacobus~W. Portegies and Mark~A. Peletier.
\newblock Well-posedness of a parabolic moving-boundary problem in the setting
  of {W}asserstein gradient flows.
\newblock {\em Interfaces and Free Boundaries. Mathematical Modelling, Analysis
  and Computation}, 12(2):121--150, 2010.

\bibitem{Mielke14TR}
Alexander Mielke.
\newblock On evolutionary {G}amma-convergence for gradient systems.
\newblock Technical Report 1915, WIAS Berlin, 2014.

\bibitem{ArnrichMielkePeletierSavareVeneroni12}
Steffen Arnrich, Alexander Mielke, Mark~A. Peletier, Giuseppe Savar\'e, and
  Marco Veneroni.
\newblock {Passing to the limit in a Wasserstein gradient flow: From diffusion
  to reaction}.
\newblock {\em Calculus of Variations and Partial Differential Equations},
  44:419--454, 2012.

\bibitem{DuongPeletierZimmer13}
M.~Hong Duong, Mark~A. Peletier, and Johannes Zimmer.
\newblock {GENERIC} formalism of a {V}lasov-{F}okker-{P}lanck equation and
  connection to large-deviation principles.
\newblock {\em Nonlinearity}, 26(2951-2971), 2013.

\bibitem{PRST22}
Mark~A. Peletier, Riccarda Rossi, Giuseppe Savar{\'{e}}, and Oliver Tse.
\newblock Jump processes as generalized gradient flows.
\newblock {\em Calculus of Variations and Partial Differential Equations},
  61(1), 2022.

\bibitem{GBC16}
Massimiliano Giona, Antonio Brasiello, and Silvestro Crescitelli.
\newblock Generalized {P}oisson{\textendash}{K}ac processes: Basic properties
  and implications in extended thermodynamics and transport.
\newblock {\em Journal of Non-Equilibrium Thermodynamics}, 41(2), 2016.

\bibitem{GBC17d}
Massimiliano Giona, Antonio Brasiello, and Silvestro Crescitelli.
\newblock Kac limit and thermodynamic characterization of stochastic dynamics
  driven by {P}oisson-{K}ac fluctuations.
\newblock {\em The European Physical Journal Special Topics}, 226:2299--2310,
  2017.

\bibitem{dGM84}
Sybren~R. de~Groot and Peter Mazur.
\newblock {\em Non-Equilibrium Thermodynamics}.
\newblock Dover Publications, 1984.

\bibitem{EthierKurtz09}
Stewart~N Ethier and Thomas~G Kurtz.
\newblock {\em Markov processes: characterization and convergence}.
\newblock John Wiley \& Sons, 2009.

\bibitem{BenzoniSerre06}
Sylvie Benzoni-Gavage and Denis Serre.
\newblock {\em Multi-dimensional hyperbolic partial differential equations:
  First-order Systems and Applications}.
\newblock OUP Oxford, 2006.

\bibitem{Zhao2010}
Kun Zhao.
\newblock On the isothermal compressible {E}uler equations with frictional
  damping.
\newblock {\em Communications in Mathematical Analysis}, 9(2):77--97, 2010.

\bibitem{KZMP20}
Richard~C. Kraaij, Alexandre Lazarescu, Christian Maes, and Mark~A. Peletier.
\newblock Fluctuation symmetry leads to {GENERIC} equations with non-quadratic
  dissipation.
\newblock {\em Stochastic Processes and their Applications}, 130(1):139--170,
  2020.

\bibitem{Valdier90}
Ivar Ekeland, Paolo Marcellini, Antonio Marino, Czeslaw Olech, Giulio
  Pianigiani, Tyrrell Rockafeller, Michel Valadier, and Mario Tosques.
\newblock {\em Methods of Nonconvex Analysis: Lectures Given at the 1st Session
  of the Centro Internazionale Matematico Estivo (CIME) Held at Varenna, Italy,
  June 15-23, 1989}, volume 1446.
\newblock Springer, 1990.

\bibitem{AmbrosioFuscoPallara00}
Luigi Ambrosio, Nicola Fusco, and Diego Pallara.
\newblock {\em Functions of Bounded Variation and Free Discontinuity Problems}.
\newblock Oxford University Press, 2000.

\bibitem{ASZ09}
Luigi Ambrosio, Giuseppe Savar{\'e}, and Lorenzo Zambotti.
\newblock Existence and stability for fokker--planck equations with log-concave
  reference measure.
\newblock {\em Probability theory and related fields}, 145(3):517--564, 2009.

\bibitem{ConfortiLeonard2022}
Giovanni Conforti and Christian L\'{e}onard.
\newblock Time reversal of {M}arkov processes with jumps under a finite entropy
  condition.
\newblock {\em Stochastic Processes and their Applications}, 144:85--124, 2022.

\bibitem{BaratoChetrite2015}
Andre~C. Barato and Raphael Chetrite.
\newblock A formal view on level 2.5 large deviations and fluctuation
  relations.
\newblock {\em Journal of Statistical Physics}, 160(5):1154--1172, 2015.

\bibitem{DonskerVaradhan1975}
Monroe~D. Donsker and S.~R.~Srinivasa Varadhan.
\newblock Asymptotic evaluation of certain {M}arkov process expectations for
  large time. {I}. {II}.
\newblock {\em Communications on Pure and Applied Mathematics}, 28:1--47; ibid.
  28 (1975), 279--301, 1975.

\bibitem{PS22}
Mark~A. Peletier and Andr{\'{e}} Schlichting.
\newblock Cosh gradient systems and tilting.
\newblock {\em Nonlinear Analysis}, page 113094, 2022.

\bibitem{drmR18b}
D.~R.~Michiel Renger.
\newblock Gradient and {GENERIC} systems in the space of fluxes, applied to
  reacting particle systems.
\newblock {\em Entropy}, 20(8):596, 2018.

\end{thebibliography}

\vspace{0.1cm}
\noindent (A.\ Montefusco) Mathematics of Complex Systems, Zuse-Institut Berlin, 14195 Berlin, Germany \\
Email: \href{mailto:montefusco@zib.de}{montefusco@zib.de}\\[0.2em]
\noindent (U.\ Sharma) Fachbereich Mathematik und Informatik, Freie Universit\"at Berlin, Arnimallee 9, 14195 Berlin, Germany \\
Email: \href{mailto:upanshu.sharma@fu-berlin.de}{upanshu.sharma@fu-berlin.de}\\[0.2em]
\noindent (O.\ Tse) Department of Mathematics and Computer Science, Eindhoven University of Technology, 5600 MB Eindhoven, The Netherlands \\
Email: \href{mailto:o.t.c.tse@tue.nl}{o.t.c.tse@tue.nl}
}

\end{document}